\documentclass[11pt]{amsart}

\usepackage{amscd}
\usepackage{amsthm}
\usepackage[centertags]{amsmath}
\usepackage{amsfonts}
\usepackage{newlfont}
\usepackage{graphicx}
\usepackage[usenames]{color}
\usepackage{amsfonts, amssymb}
\usepackage{mathrsfs}
\usepackage{latexsym}
\usepackage{bbold}
\usepackage{tikz}
\usepackage{verbatim}
\usepackage{tabulary,tabularx}
\usepackage[all]{xy}
\usepackage[latin1]{inputenc}

\newtheorem{theorem}{Theorem}[section]

\newtheorem{proposition}[theorem]{Proposition}
\newtheorem{corollary}[theorem]{Corollary}
\newtheorem{lemma}[theorem]{Lemma}
\newtheorem{definition}[theorem]{Definition}

\theoremstyle{definition}
\newtheorem{remark}[theorem]{Remark}
\newtheorem{example}[theorem]{Example}

\theoremstyle{definition}

\theoremstyle{remark}


\newcommand{\LL}{\mathcal L}
\newcommand{\PP}{\mathcal P}

\newcommand{\zN}{\mathbb N}

\renewcommand{\H}{H(\mathbb{C})}
\newcommand{\C}{\mathbb C}

\newcommand{\sub}{\subseteq}
\newcommand{\zT}{\mathbb T}
\newcommand{\zD}{\mathbb D}

\newcommand{\f}{\frac}

\renewcommand{\l}{\left(}
\renewcommand{\r}{\right)}

\textwidth=17.4cm \textheight=23cm \hoffset=-20.5mm \voffset=-5mm
\parskip 7.2pt

\begin{document}
\title{Orbits of homogeneous polynomials on Banach spaces}
\author{Rodrigo Cardeccia, Santiago Muro}

\address{DEPARTAMENTO DE MATEM\'ATICA - PAB I,
	FACULTAD DE CS. EXACTAS Y NATURALES, UNIVERSIDAD DE BUENOS AIRES, (1428) BUENOS AIRES, ARGENTINA AND IMAS-CONICET} \email{rcardeccia@dm.uba.ar} 
\address{FACULTAD DE CIENCIAS EXACTAS, INGENIERIA Y AGRIMENSURA, UNIVERSIDAD NACIONAL DE ROSARIO, ARGENTINA AND CIFASIS-CONICET}
\email{muro@cifasis-conicet.gov.ar}

\thanks{Partially supported by ANPCyT PICT 2015-2224, UBACyT 20020130300052BA, PIP 11220130100329CO and CONICET}

\keywords{}
\subjclass[2010]{
	47H60,  
	37F10, 
	47A16, 
	32H50.   
}

\begin{abstract}
We study the dynamics induced by homogeneous polynomials on Banach spaces. It is known that no homogeneous polynomial defined on a Banach space can have a dense orbit. We show, a simple and natural example of a homogeneous polynomial with an orbit that is at the same time $d$-dense (the orbit meets every ball of radius $d$), weakly dense and such that $\Gamma \cdot Orb_P(x)$ is dense for every $\Gamma\subset \mathbb C$ that is either unbounded or that has 0 as an accumulation point.
  Moreover we generalize the construction to arbitrary infinite dimensional separable Fr\'echet spaces. To prove this we study Julia sets of homogeneous polynomials on Banach spaces.
\end{abstract}

\maketitle

\section{Introduction}
Let $X$ be a Banach space. A function $F:X\to X$ is said to be \textit{hypercyclic} if there exists $x\in X$ such that its orbit $Orb_F(x)=\{F^n(x):n\in\zN_0\}$ is dense in $X$. In this case $x$ is called a \textit{hypercyclic} vector.
 The theory of linear hypercyclic operators had a great development in the last decades (see for example the books on the subject \cite{BayMat09,GroPer11}). 

As a natural extension of the linear theory on may study orbits of (non linear) homogeneous operators, or polynomials. The first result in this direction was due to Bernardes \cite{Ber98}, where he proved that no (non linear) homogeneous polynomial on a Banach space can be hypercyclic.
From his result, it can be deduced that associated to each homogeneous polynomial there is a ball (afterwards \textit{ the limit ball}) which is invariant under the action of the polynomial. Moreover, any orbit which enters the limit ball converges to 0. 
 Maybe this result was one of the reasons why the theory of the dynamics of non linear, and in particular, homogeneous polynomials has had a much smaller development than the linear counterpart. However, the behavior of the orbits induced by a homogeneous polynomial can be highly nontrivial and it is far from being understood. For example, in \cite{Ber98} Bernardes showed that the orbits may oscillate between infinity and the boundary of the limit ball. He also proved that every infinite dimensional and separable Banach space supports supercyclic homogeneous polynomials. More recently Peris, Kim and Song \cite{KimPer12,KimPer122} proved that every separable Banach space of dimension greater than one supports \textit{numerically hypercyclic} homogeneous polynomials. This means that there are vectors $x\in S_X, x^*\in S_{X^*}$ for which its numerical orbit, $Norb_P(x,x^*):=\{ x^*(P^n(x)):\,n\in\mathbb N_0\}$ is dense in $\C$.

Since any orbit that meets the limit ball converges to 0, no orbit induced by a homogeneous polynomial is dense in $X$. But, it is  natural to ask how big and complicated can the orbits that never meet the limit ball be. For example, we can ask whether there exist homogeneous polynomials with orbits which imply some weaker types of hypercyclicity.
In particular we can formulate the following questions:
\begin{enumerate}
	\item The Bourdon-Feldman Theorem \cite{BouFel03} states that somewhere dense orbits of linear operators are actually dense.  Does there exist homogeneous polynomials in Banach spaces with somewhere dense orbits?
	\item Feldman \cite{Fel02} showed that if a linear operator is $d$-hypercyclic for some $d>0$ ( i.e. an operator with an orbit that meets every ball of radius $d$), then it is hypercyclic. Does there exist $d$-hypercyclic homogeneous polynomials in Banach spaces?
	\item Are there weakly hypercyclic homogeneous polynomials (i.e. polynomials with an orbit which is dense with respect to the weak topology)?
	\item Recently, Charpentier, Ernst and Menet \cite{ChaErnMen16} characterized the subsets $\Gamma\subset\mathbb C$ for which every  $\Gamma$-supercyclic  linear operator (i.e. operators $T$ such that $\Gamma\cdot Orb_T(x)$ is dense for some $x\in X$) is a hypercyclic linear operator. For which $\Gamma\subset\mathbb C$ are there $\Gamma$-supercyclic homogeneous polynomials?
\end{enumerate}

To study these questions we propose a notion of Julia set associated to homogeneous polynomials in Banach spaces. From its basic properties we deduce that  every orbit of a homogeneous polynomial is nowhere dense. Thus, the Bourdon-Feldman Theorem still holds for homogeneous polynomials. Then, a careful study of the Julia set of a very simple homogeneous polynomial on $\ell_p$, which is the product of the backward shift operator and a linear functional, allows us to prove that it is, at the same time, $d$-hypercyclic, weakly hypercyclic and $\Gamma$-supercyclic  for every $\Gamma\sub \C$ that is either  unbounded or not bounded away from zero. Moreover all this properties are achieved by the same orbit. Finally, we also show that such a homogeneous polynomial exists on every separable infinite dimensional Banach space.




It should be mentioned that if the space is a non-normable Fr\'echet space, then it can support hypercyclic homogeneous polynomials. The first to notice it was Peris \cite{Per01}, who exhibited a chaotic homogeneous polynomial on $\C^\zN$. Later,  other examples were presented, on some K\"othe Echelon spaces (including $H(\zD)$) \cite{MarPer10}, on some spaces of differentiable functions on the real line \cite{AroMir08} and more recently on $\H$, the space of entire functions on the complex plane \cite{cardeccia2017hypercyclic}.
There are also positive results for hypercyclicity of non-homogeneous polynomials. In \cite{MarPer09} the authors proved that every complex separable infinite dimensional Banach space supports a non-linear hypercyclic polynomial. More recently Bernardes and Peris showed in \cite{BerPer13} the existence of 
frequently hypercyclic, chaotic and distributionally chaotic 
 (non-homogeneous) polynomials for a very wide class of infinite dimensional separable Banach spaces. 
There where also some attempts to extend the concept of hypercyclicity to multilinear operators, see \cite{GroKim13,BesCon14}.

\newpage
\section{Julia sets for homogeneous polynomials on Banach spaces}
\subsection{Preliminaries}
Given a topological space $X$, a function $F:X\to X$ is said to be \textit{transitive} if for every pair of non empty sets $U,V$, there exists $n\in\zN$ such that $F^n(U)\cap V\ne\emptyset$. If $X$ is a complete separable metric space without isolated points, then transitivity is equivalent to hypercyclicity. This means that there exists $x\in X$ whose orbit, $Orb_F(x)=\{F^n(x):n\in\zN\}$, is dense in the space. If $F$ is hypercyclic and the periodic vectors are dense in the space, then $F$ is said to be \textit{chaotic}.

Given an $F$-space $X$,  $P:X\to X$ is an $m$-homogeneous polynomial if there exist an $m$-linear operator $A:\overbrace{X\times\ldots \times X}^m\to X$ such that $P(x)=A(x,\ldots,x)$. 
We will deal with continuous polynomials. If $X$ is a normed space then we can define a norm in the space of $m$-homogeneous polynomials $\PP(^m X;X)$,
$$\|P\|:=\sup_{x\in B_X} \|P(x)\|.$$

Bernardes showed that if $X$ is a normed space, then no (non linear) homogeneous polynomial is hypercyclic \cite{Ber98} (see also \cite[Lemma 1.2]{Ued94} for a related finite dimensional statement). For the sake of completeness and because we need to define the notion of limit ball, we give a proof of (slightly modified version of) the mentioned result.
\begin{proposition}
	\label{Ber}
  Let $X$ be a normed space, let $P$ be an $m$-homogeneous polynomial with $m\geq 2$ and let $r_P= \|P\|^\frac{1}{1-m}$. Then $P(r_P\overline B_X)\subseteq r_P\overline B_X$. Moreover for all $y\in r_PB_X$, $\lim_{n\rightarrow \infty} P^n(y)=0$. 
\end{proposition}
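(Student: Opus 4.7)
The plan is to exploit the basic growth estimate $\|P(x)\| \le \|P\|\cdot\|x\|^m$ that holds for any continuous $m$-homogeneous polynomial, together with the fact that the specific value $r_P = \|P\|^{1/(1-m)}$ is exactly the fixed point of the scalar map $t\mapsto \|P\|\, t^m$.

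First I would check invariance. For $x\in r_P\overline B_X$, the homogeneous estimate gives $\|P(x)\|\le \|P\|\cdot\|x\|^m\le \|P\|\, r_P^m$. A short computation with the exponents, namely
\[
\|P\|\, r_P^m \;=\; \|P\|\cdot \|P\|^{m/(1-m)} \;=\; \|P\|^{1+m/(1-m)} \;=\; \|P\|^{1/(1-m)} \;=\; r_P,
\]
shows that $\|P(x)\|\le r_P$, proving $P(r_P\overline B_X)\subseteq r_P\overline B_X$.

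Next I would handle the contraction statement. For $y\in r_P B_X$ write $\|y\| = c\, r_P$ with $c<1$. Using the same estimate once,
\[
\|P(y)\| \;\le\; \|P\|\cdot\|y\|^m \;=\; \|P\|\, c^m r_P^m \;=\; c^m r_P.
\]
Iterating, I would show by induction that $\|P^n(y)\|\le c^{m^n} r_P$: assuming the bound at step $n$, the homogeneous estimate gives
\[
\|P^{n+1}(y)\|\;\le\;\|P\|\cdot\|P^n(y)\|^m\;\le\;\|P\|\, c^{m^{n+1}} r_P^m \;=\; c^{m^{n+1}} r_P.
\]
Since $c<1$ and $m\ge 2$, we have $m^n\to\infty$ and therefore $c^{m^n}\to 0$, so $P^n(y)\to 0$.

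There is no real obstacle here; the only substantive point is the algebraic verification that $r_P$ defined by $r_P=\|P\|^{1/(1-m)}$ satisfies the fixed-point identity $\|P\|\, r_P^{m-1}=1$, which is what drives both the invariance of the closed ball and the double-exponential decay inside the open ball. The argument also transparently uses $m\ge 2$: for $m=1$ the exponent $1/(1-m)$ is undefined and the iterated bound $c^{m^n}$ does not improve to zero.
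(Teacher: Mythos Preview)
Your proof is correct and follows essentially the same approach as the paper's: both use the homogeneous estimate $\|P(x)\|\le\|P\|\,\|x\|^m$, verify the identity $\|P\|\,r_P^m=r_P$ to get invariance of the closed ball, and then obtain the bound $\|P^n(y)\|\le (\|y\|/r_P)^{m^n}r_P$ for the convergence to zero. The only difference is that you spell out the induction step explicitly, whereas the paper simply states the resulting inequality.
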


\begin{proof}
 Let $x\in r_P\overline B_X$,then
\begin{align*}
 \|P(x)\|&\leq \|P\|\|x\|^m< \|P\|\|P\|^{\frac{m}{1-m}}\\
 &=\|P\|^{1+\frac{m}{1-m}}=r_P.
\end{align*}
Let $x$ with $\|x\|<r_P$, then
$$\|P^n(x)\|< \left(\frac{\|x\|}{r_P}\right)^{m^n} r_P\rightarrow 0.$$
\end{proof}










\begin{corollary}
 No homogeneous polynomial of degree $\geq 2$ defined on a normed space  can be hypercyclic.
 \end{corollary}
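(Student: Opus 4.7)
The plan is a short contradiction argument that reads everything off Proposition \ref{Ber}. Suppose, toward a contradiction, that $P$ is hypercyclic with hypercyclic vector $x \in X$. We may assume $\|P\| > 0$ (otherwise $P \equiv 0$ is obviously not hypercyclic) and $X \neq \{0\}$, so that $r_P = \|P\|^{1/(1-m)}$ is a well-defined positive radius and the open ball $r_P B_X$ is nonempty.

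First I would exploit density of $Orb_P(x)$ to produce an index $n_0$ with $P^{n_0}(x) \in r_P B_X$. Setting $y := P^{n_0}(x)$, the second part of Proposition \ref{Ber} yields $P^n(y) \to 0$, i.e.\ $P^{n_0+n}(x) \to 0$ as $n \to \infty$. Since any convergent sequence is bounded, the tail $\{P^k(x) : k \geq n_0\}$ lies in some ball, and adjoining the finite head $\{P^k(x) : 0 \leq k < n_0\}$ shows that the full orbit is contained in some $R\,\overline{B}_X$. But a nontrivial normed space is unbounded, so $R\,\overline{B}_X$ cannot be dense in $X$, contradicting hypercyclicity of $x$.

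There is no real obstacle here; the proposition has already done the work by identifying an attracting invariant ball. The only subtlety worth pointing out explicitly is that a single visit of the orbit to the \emph{open} ball $r_P B_X$ is enough to force the tail to converge to $0$, which is exactly the strict-inequality form in which the proposition was stated. Density of the orbit guarantees such a visit, and the conclusion follows.
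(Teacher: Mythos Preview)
Your argument is correct and is precisely the intended one: the paper states the corollary without proof because it follows immediately from Proposition~\ref{Ber}, and what you have written is exactly that immediate deduction spelled out. There is nothing to add.
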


 \begin{definition}  
  Let $P$ be an $m$-homogeneous polynomial on a normed space $X$. We will say that $r_P=\|P\|^{\frac{1}{1-m}}$ is the limit radius of $P$ and that $r_PB_X$ is the limit ball of $P$.
 \end{definition}

 \subsection{Julia sets for homogeneous polynomials}
 One distinctive feature of the dynamics induced by a holomorphic function on the complex plane is that we may partition its domain into two completely invariant sets: the Julia set, which is closed, non-empty, perfect, with chaotic behavior (and thus, where the interesting dynamics occur) and the Fatou set, which is open and
 has regular behavior. Along the years many generalizations to several variables appeared and there is not a uniform consensus on what the Julia set means in this context. 
 

Note that we can naturally 
partition the dynamical system $(P,X)$ induced by a homogeneous polynomial in two different systems. On the one hand we have $A_P:= \bigcup_{n\ge 0} P^{-n}(r_PB_X)=\{x\in X: P^n(x)\rightarrow 0\}$. 
 This set is clearly an open set, $P$-invariant and is by definition the basin of attraction of zero. On the opposite side we have its complement, namely $A_P^c=\{x:\|P^n(x)\|\geq r_P \ \forall\, n\ge 0\}$. Clearly $A_P^c$ is closed and it is also $P$-invariant. For the aims of this article, we propose the following. 

 \begin{definition}\rm
The \textit{Julia set} associated to the homogeneous polynomial $P$ on a normed space is the set $J_P:= \partial A_P$.
 \end{definition}
As the following easy result shows, any orbit with interesting dynamics must lie in the Julia set.
 \begin{proposition}\label{estoy en J_P}
 Let $P$ be an $m$-homogeneous polynomial on a normed space $X$. If $x\notin J_P$ then either $\lim P^n(x)= \infty$ or $\lim P^n(x)= 0$.
 \end{proposition}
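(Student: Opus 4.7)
The set $A_P$ is open, so $X \setminus J_P = A_P \sqcup \mathrm{int}(A_P^c)$, and the argument splits along these two pieces.

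If $x \in A_P$, there is nothing to do: by the very definition of $A_P$, $P^n(x)\to 0$.

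The interesting case is $x \in \mathrm{int}(A_P^c)$. Here I would exploit homogeneity. Since $\mathrm{int}(A_P^c)$ is open and contains $x$, there is some $\lambda\in(0,1)$ such that $\lambda x \in \mathrm{int}(A_P^c)\subseteq A_P^c$. By the description of $A_P^c$ from Proposition~\ref{Ber} and the remarks preceding the definition of $J_P$, this means
\[
\|P^n(\lambda x)\| \ge r_P \qquad \text{for every } n\ge 0.
\]
Now use the $m$-homogeneity of $P$, which iterates to $P^n(\lambda x)=\lambda^{m^n}P^n(x)$. Substituting,
\[
\|P^n(x)\| \ge \frac{r_P}{\lambda^{m^n}}.
\]
Since $0<\lambda<1$, we have $\lambda^{m^n}\to 0$, so $\|P^n(x)\|\to\infty$, as required.

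The only thing one has to notice is that picking $\lambda<1$ (rather than $\lambda>1$) is essential: that is what turns the uniform lower bound $r_P$ into an unbounded one after dividing by $\lambda^{m^n}$. There is no real obstacle, because $\mathrm{int}(A_P^c)$ being open and $x\ne 0$ (indeed $0\in A_P$) guarantees the existence of such a $\lambda$ in $(0,1)$.
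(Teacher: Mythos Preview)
Your proof is correct and follows essentially the same route as the paper: both arguments reduce to picking a scalar $0<\lambda<1$ with $\lambda x\notin A_P$, invoking the uniform lower bound $\|P^n(\lambda x)\|\ge r_P$, and then using $m$-homogeneity to conclude $\|P^n(x)\|\ge r_P/\lambda^{m^n}\to\infty$. The only cosmetic difference is that you make the decomposition $X\setminus J_P=A_P\sqcup\mathrm{int}(A_P^c)$ explicit, whereas the paper phrases it as ``if $P^n(x)\nrightarrow 0$ then $x\notin A_P$, hence $x\notin\overline{A_P}$.''
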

 \begin{proof}
 Let $x\notin J_P$ and suppose  $ P^n(x) \nrightarrow 0$. Then $x\notin A_P$ and consequently 
 there is some $0<t<1$ with $tx\notin A_P$. This implies by Proposition \ref{Ber} that, for every $n$, $\|P^n(tx)\|\geq r_P$ and therefore $\|P^n(x)\|=\frac{1}{t^{m^n}}\|P^n(tx)\|\geq \frac{1}{t^{m^n}} r_P \to\infty$. 
 \end{proof}
 Notice that, in particular, periodic vectors and orbits having nonzero accumulation points must belong to $J_P$.

 In the linear operator setting, an irregular vector is a vector that  satisfies $\liminf_n \|T^n(x)\|=0$ while $\limsup_n \|T^n(x)\|=\infty$. This conditions are incompatible for a non-linear homogeneous polynomial on a Banach space. In view of Proposition \ref{Ber}, it is natural to give the following definition of irregular vectors for homogeneous polynomials. Note that any irregular vector must belong to $J_P$.
 \begin{definition}
A vector is \textit{irregular} for a homogeneous polynomial $P$ if $\liminf_n \|P^n(x)\|=r_P$ and $\limsup_n \|P^n(x)\|=\infty$.
 \end{definition}

 We may also consider the basin of attraction of infinity. We define
 $R_P$ as the maximal open subset such that every orbit tends in norm to $\infty$, that is, $R_P=\{x\in X:\text{ there exist } r>0 \text{ such that } \|P^n(y)\|\to \infty \text{ for every } y\in B_r(x)\}$.
 By definition $R_P$ is open and contained in $\overline{A_P}^c$.
 \begin{proposition}
 Let $P$ be an $m$-homogeneous polynomial defined over a normed space $X$. Then, $X$ is the disjoint union of $A_P,\, J_P$ and $ R_P$.
 \end{proposition}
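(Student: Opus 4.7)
The plan is to first verify pairwise disjointness of $A_P$, $J_P$, $R_P$, and then show they cover $X$. The core observation is that $A_P$ is open (it is a union of preimages of the open ball $r_P B_X$ under the continuous maps $P^n$), so $J_P = \partial A_P = \overline{A_P}\setminus A_P$, which already gives $A_P\cap J_P=\emptyset$. Moreover $A_P$ and $R_P$ are disjoint by their pointwise descriptions (orbits tending to $0$ versus to $\infty$), and the inclusion $R_P\subseteq \overline{A_P}^{\,c}$ noted just before the statement gives $J_P\cap R_P\subseteq \overline{A_P}\cap\overline{A_P}^{\,c}=\emptyset$.

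For the covering part it suffices to prove $X\setminus \overline{A_P}\subseteq R_P$, since $\overline{A_P}=A_P\cup J_P$. Fix $x\notin\overline{A_P}$ and choose $r>0$ with $B_r(x)\cap\overline{A_P}=\emptyset$. Then every $y\in B_r(x)$ lies outside $A_P$ and outside $J_P\subseteq\overline{A_P}$, so Proposition \ref{estoy en J_P} forces either $P^n(y)\to 0$ or $\|P^n(y)\|\to\infty$. The first option is ruled out because $y\notin A_P$, so $\|P^n(y)\|\to\infty$ for every $y\in B_r(x)$, which by definition places $x$ in $R_P$.

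There is no real obstacle here: the entire argument is structural, and the only nontrivial input is the dichotomy supplied by Proposition \ref{estoy en J_P}, which converts the information ``$y$ is not in the Julia set and not attracted to $0$'' into ``$y$ escapes to infinity.'' The subtlety one must be careful about is that the definition of $R_P$ requires the escape to infinity to hold uniformly on a whole neighborhood of $x$; this is exactly why we select the ball $B_r(x)$ disjoint from $\overline{A_P}$ (rather than merely from $A_P$), so that the hypothesis of Proposition \ref{estoy en J_P} applies to every point of $B_r(x)$ simultaneously.
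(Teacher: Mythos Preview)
Your proof is correct and follows essentially the same approach as the paper's: both reduce the claim to showing that the open complement $X\setminus\overline{A_P}=(A_P^c)^\circ$ coincides with $R_P$, and both establish the nontrivial inclusion $X\setminus\overline{A_P}\subseteq R_P$ by taking a ball disjoint from $\overline{A_P}$ and applying Proposition~\ref{estoy en J_P} to every point of that ball. Your treatment of the disjointness is slightly more explicit than the paper's, but the substance is identical.
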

\begin{proof}
 It suffices to prove that $(A_P^c)^\circ=R_P.$
 Since $R_P$ and is open $R_P\cap A_P=\emptyset$ we have that $R_P\sub (A_P^c)^\circ$. 
 Reciprocally, if $x\in (A_P^c)^\circ$, there exists an $\epsilon>0$ with $B_\epsilon (x)\cap \overline {A_P}=\emptyset$. Let $y\in B_\epsilon(x)$, by the last proposition either $P^n(y)\to 0$ or $\|P^n(y)\|\to \infty$. Since $y\notin A_P$, $P^n(y)\to \infty$.
 Therefore $x\in R_P$.
\end{proof}
Recall that a function $F:Y\to Y$ is said to be \textit{quasiconjugate} to a function $G:X\to X$ if there exist a continuous factor $\phi:X\to Y$, with dense range, such that the following diagram commutes
\[
\xymatrix{
	X \ar[d]^\phi \ar[r]^G & X \ar[d]^\phi \\
	Y \ar[r]^F & Y
}
\]
Most of the dynamical properties are preserved under quasiconjugacy.

The dynamical system induced by a linear operator $T$ and $\lambda T$ may be completely different. Indeed, if $\|\lambda T\|<1$ then every orbit tends to zero while $T$ can support dense orbits. This is not the case if $P$ is a homogeneous polynomial. The same phenomenon occurs for bilinear operators (see \cite{GroKim13}).
\begin{proposition}\label{quasiI}
	Let $P$ be an $m$-homogeneous polynomial, $m>1$. Then for every $\lambda\neq 0$, $\lambda P$ is quasiconjugate to $P$ under a linear isomorphism.
\end{proposition}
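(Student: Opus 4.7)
The plan is to build the conjugating map as a simple scalar multiplication $\phi(x)=cx$, where the scalar $c$ is chosen to absorb the factor $\lambda$ via the homogeneity of $P$. The only computation to do is to figure out the right value of $c$.

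Concretely, for $\phi(x)=cx$ with $c\in\zC\setminus\{0\}$ one has
\begin{align*}
\phi(P(x)) &= c\,P(x), \\
(\lambda P)(\phi(x)) &= \lambda\, P(cx) = \lambda c^m P(x),
\end{align*}
using that $P$ is $m$-homogeneous. Hence the diagram
\[
\xymatrix{
X \ar[d]^\phi \ar[r]^P & X \ar[d]^\phi \\
X \ar[r]^{\lambda P} & X
}
\]
commutes if and only if $c=\lambda c^m$, i.e. $c^{m-1}=1/\lambda$. Since $m>1$ and $\lambda\neq 0$, such a $c$ exists in $\zC$ (pick any $(m-1)$-th root of $1/\lambda$), and in particular $c\neq 0$, so $\phi$ is a linear isomorphism of $X$ onto itself, and therefore automatically has dense range.

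There is no real obstacle: the assertion reduces to the one-line algebraic identity above, and the only (very mild) point to mention is that we use complex scalars so that an $(m-1)$-th root of $1/\lambda$ is available. This is consistent with the setting of the paper, where $P$ is a polynomial on a complex space.
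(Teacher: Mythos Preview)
Your proof is correct and is essentially identical to the paper's: both take $\phi(x)=cx$ with $c=\lambda^{-1/(m-1)}$ (an $(m-1)$-th root of $1/\lambda$) and verify $(\lambda P)\circ\phi=\phi\circ P$ by homogeneity.
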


\begin{proof}
	Let $\lambda^\f{1}{m-1}$ be any $\f{1}{m-1}$-root for $\lambda$. The following factor works, $\phi(x)=\f{1}{\lambda^\f{1}{m-1}} x$. Indeed,
	\begin{align*}
	\lambda P\l\f{x}{\lambda^\f{1}{m-1}}\r&=
\f{\lambda}{\lambda^\f{m}{m-1}} P(x)=\f{1}{\lambda^\f{1}{m-1}}P(x).
	\end{align*}
\end{proof}
The Julia set is preserved under quasiconjugacy provided that the factor is a linear isomorphism.
\begin{lemma}\label{conjugar J_P}
 Let $Q$ and $P$ be homogeneous polynomials such that $Q$ is quasiconjugate to $P$ under a linear isomorphism. 
Then $\phi (A_P)=A_Q$, $\phi(J_P)=J_Q$, $\phi(R_P)=R_Q$.
\end{lemma}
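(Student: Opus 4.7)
The plan is to exploit that a linear isomorphism $\phi\colon X\to Y$ is simultaneously a homeomorphism, a bounded linear map, and a map sending $0$ to $0$; together with its inverse it satisfies $\|\phi(z_n)\|\to 0 \iff \|z_n\|\to 0$ and $\|\phi(z_n)\|\to\infty \iff \|z_n\|\to\infty$, since $\frac{1}{\|\phi^{-1}\|}\|z_n\|\le\|\phi(z_n)\|\le\|\phi\|\,\|z_n\|$.

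First I would iterate the commuting relation $Q\circ\phi=\phi\circ P$ to obtain $Q^n\circ\phi=\phi\circ P^n$ for every $n\ge 0$. From this the equality $\phi(A_P)=A_Q$ is immediate: if $P^n(x)\to 0$ then $Q^n(\phi(x))=\phi(P^n(x))\to 0$ by continuity of $\phi$, so $\phi(x)\in A_Q$; conversely, if $\phi(x)\in A_Q$ then $\phi(P^n(x))\to 0$ and the continuity of $\phi^{-1}$ gives $P^n(x)\to 0$. The same argument, using $\|\phi(z_n)\|\to\infty\iff\|z_n\|\to\infty$, shows that $\|P^n(y)\|\to\infty$ if and only if $\|Q^n(\phi(y))\|\to\infty$ for every $y\in X$.

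Next, for $R_P$, one uses that $\phi$ is a homeomorphism so it carries open sets to open sets and open balls centered at $x$ to open neighborhoods of $\phi(x)$ (and conversely). Hence a point $x$ admits an open neighborhood on which every orbit under $P$ tends to infinity in norm if and only if $\phi(x)$ admits an open neighborhood on which every orbit under $Q$ tends to infinity in norm, by the norm equivalence above applied along orbits. This yields $\phi(R_P)=R_Q$. Finally, since $\phi$ is a homeomorphism it commutes with topological boundary, so
\[
\phi(J_P)=\phi(\partial A_P)=\partial \phi(A_P)=\partial A_Q=J_Q.
\]

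There is no real obstacle here; the only point that deserves attention is ensuring that the notion of "tends to infinity" is transported across $\phi$, which is precisely what the two-sided norm estimate coming from the boundedness of $\phi$ and $\phi^{-1}$ provides. Everything else is formal manipulation with the intertwining $Q^n\circ\phi=\phi\circ P^n$ and the fact that a bicontinuous bijection preserves limits, open sets, closures, and boundaries.
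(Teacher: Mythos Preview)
Your proof is correct and follows essentially the same route as the paper: both use the intertwining $Q^n\circ\phi=\phi\circ P^n$ together with the two-sided norm estimate for a linear isomorphism to transport the conditions defining $A_P$ and $R_P$. The only cosmetic differences are that the paper argues one inclusion and invokes symmetry via $\phi^{-1}$, and deduces $\phi(J_P)=J_Q$ from the disjoint partition $X=A_P\cup J_P\cup R_P$ rather than from $\phi(\partial A_P)=\partial\phi(A_P)$; both are equivalent.
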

\begin{proof}
 Since $\phi$ is an isomorphism, it is enough to prove that $\phi(A_P)\sub A_Q$ and $\phi(R_P)\sub R_Q$.
 Let $x\in A_P$, then $\|Q^n(\phi(x))\|=\|\phi(P^n(x))\|\leq \|\phi\|\|P^n(x)\|\rightarrow 0$. So that $\phi(x)\in A_Q.$
 Consider now $x_0\in R_P$, and let $\epsilon'>0$ such that $\|P^n(x)\|\rightarrow \infty$ for all $x\in B_{\epsilon'}(x_0)$. Let
  $\epsilon>0$ such that $\phi^{-1}(B_{\epsilon}(\phi(x_0))\sub B_{\epsilon'}(x_0)$. Thus for $y\in B_{\epsilon}(\phi(x_0))$, 
 $\|Q^n(y)\|=\|Q^n(\phi(\phi^{-1}(y)))\|=\|\phi(P^n(\phi^{-1}(y)))\|$, and thus $\|Q^n(y)\|\rightarrow \infty$ because $\phi$ is an isomorphism and $\phi^{-1}(y)\in B_{\epsilon'}(x_0)$.
 \end{proof}
\begin{corollary}
	Let $P$ be a non linear $m$-homogeneous polynomial and $\lambda\neq 0$. Then   $J_{\lambda P}=\f{1}{\lambda^{\f{1}{m-1}}} J_P$.
\end{corollary}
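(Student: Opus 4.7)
The plan is to combine the two preceding results directly. By Proposition \ref{quasiI}, for any nonzero $\lambda$ the polynomial $\lambda P$ is quasiconjugate to $P$ through the linear isomorphism $\phi(x)=\frac{1}{\lambda^{1/(m-1)}}x$; explicitly, the proof of that proposition verified $\lambda P\circ \phi = \phi\circ P$, so $\phi$ intertwines the two dynamical systems.

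Next I would invoke Lemma \ref{conjugar J_P} with $Q=\lambda P$ and the factor $\phi$ above. Since $\phi$ is a linear isomorphism, that lemma gives $\phi(J_P)=J_{\lambda P}$. Finally, because $\phi$ is just scalar multiplication by $\lambda^{-1/(m-1)}$, we obtain
\[
J_{\lambda P}=\phi(J_P)=\frac{1}{\lambda^{1/(m-1)}}\,J_P,
\]
which is the desired identity.

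The statement is thus immediate once the two previous results are in hand; I do not expect any real obstacle. The only point to mention is that the $(m-1)$-th root of $\lambda$ is not canonical (there are $m-1$ choices), but the identity holds for any fixed choice, in agreement with the way $\phi$ was defined in Proposition \ref{quasiI}.
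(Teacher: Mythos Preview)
Your proposal is correct and follows exactly the approach intended by the paper: the corollary is stated without proof precisely because it is the immediate combination of Proposition~\ref{quasiI} and Lemma~\ref{conjugar J_P} that you describe. Your remark about the non-canonical choice of $(m-1)$-th root is also in line with the paper's treatment in Proposition~\ref{quasiI}.
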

\begin{lemma}
If $x\in A_P$ then $tx\in A_P$ for all $|t|\le 1$, if $x\in R_P$ then $tx\in R_P$ for all $|t|\ge 1$.
\end{lemma}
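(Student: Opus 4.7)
The plan is to exploit $m$-homogeneity directly. For any scalar $t$ and any $n$, iterating $P(tx)=t^m P(x)$ gives
\[
P^n(tx)=t^{m^n}P^n(x),\qquad \|P^n(tx)\|=|t|^{m^n}\|P^n(x)\|.
\]
Everything in the lemma follows by choosing how to bound $|t|^{m^n}$.

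For the first assertion I would let $x\in A_P$, so $\|P^n(x)\|\to 0$. For $|t|\le 1$ we have $|t|^{m^n}\le 1$ for every $n\ge 0$, hence $\|P^n(tx)\|\le\|P^n(x)\|\to 0$, which gives $tx\in A_P$. (The case $t=0$ is trivial since $0\in A_P$.) No subtlety here.

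For the second assertion I would take $x\in R_P$ and fix $r>0$ such that $\|P^n(y)\|\to\infty$ for every $y\in B_r(x)$. Given $|t|\ge 1$ I claim the same ball of radius $r$ around $tx$ works, certifying $tx\in R_P$. Indeed, if $y\in B_r(tx)$ then $y/t$ makes sense (since $|t|\ge 1>0$) and
\[
\|y/t-x\|=\tfrac{1}{|t|}\|y-tx\|<\tfrac{r}{|t|}\le r,
\]
so $y/t\in B_r(x)$ and $\|P^n(y/t)\|\to\infty$. Applying the homogeneity identity and using $|t|^{m^n}\ge 1$,
\[
\|P^n(y)\|=|t|^{m^n}\|P^n(y/t)\|\ge\|P^n(y/t)\|\longrightarrow\infty,
\]
so $tx\in R_P$.

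There is really no main obstacle: both statements are a one-line consequence of the scaling identity combined with the monotonicity of $s\mapsto s^{m^n}$ on $[0,\infty)$. The only point to handle with a little care is reducing the $R_P$ case to the ball around $x$ by dividing by $t$, which is why the hypothesis $|t|\ge 1$ (ensuring $B_{r/|t|}(x)\subseteq B_r(x)$ and $|t|^{m^n}\ge 1$) matches exactly what is needed.
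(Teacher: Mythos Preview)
Your proof is correct and follows essentially the same approach as the paper: both use the homogeneity identity $P^n(tx)=t^{m^n}P^n(x)$ directly for the $A_P$ part, and both reduce the $R_P$ part to the original ball around $x$ by dividing by $t$. The only cosmetic difference is that the paper verifies the larger ball $B_{|t|\epsilon}(tx)$ works, while you use $B_r(tx)$; since the latter is contained in the former, your argument is equally valid.
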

\begin{proof}
 Let $x\in A_P$ and $|t|\le 1$. Then $\|P^n(tx)\|\leq \|P^n(x)\|\rightarrow 0$. Consequently we have that $tx\in A_P$.
 Suppose now that $x\in R_P$ and $|t|\ge 1$. There exists $\epsilon>0$ such that $\|P^n(y)\|\rightarrow 0$ for all $y\in B_\epsilon(x)$.
 Let $y\in B_{\epsilon t}(tx)$, therefore
 \begin{equation*}
  \left\|\frac{y}{t}-x\right\|\leq \epsilon \text{ and }\|P^n(y)\|\geq\|P^n\l\frac{y}{t}\r\|\rightarrow \infty.
  \end{equation*}
 \end{proof}
 The following result shows that the Julia set of a homogeneous polynomial share some of the properties satisfied by the Julia set of  a holomorphic function on the complex plane.
\begin{proposition}[Properties of $J_P$]
Let $P$ be a non linear homogeneous polynomial.
The Julia set $J_P$ satisfies the following properties:
\begin{itemize}
\item[i)] it is a closed set with empty interior;
\item[ii)] it is $P$-invariant;
\item[iii)] it is perfect;
\item[iv)]  $J_{P^n}=J_P$.
\end{itemize}  
\end{proposition}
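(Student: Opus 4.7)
My plan is to exploit two features established earlier in the excerpt: that $A_P$ is open (so $J_P = \partial A_P = \overline{A_P}\setminus A_P$), and that homogeneity forces $\|P^n(\lambda x)\| = |\lambda|^{m^n}\|P^n(x)\|$ for every $\lambda\in\C$.

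For (i) and (ii): closedness of $\partial A_P$ is automatic. Empty interior follows from the purely topological fact that the boundary of an open set has empty interior: if a nonempty open $U$ were contained in $\partial A_P$, then $U\sub\overline{A_P}$ would force $U\cap A_P\neq\emptyset$, contradicting $\partial A_P\cap A_P=\emptyset$. For $P$-invariance, first note $P(A_P)\sub A_P$ directly from the definition $A_P=\{x:P^n(x)\to 0\}$; by continuity $P(\overline{A_P})\sub\overline{A_P}$. For $x\in J_P$, if $P(x)$ were in $A_P$ then $P^n(P(x))=P^{n+1}(x)\to 0$ would imply $x\in A_P$, a contradiction; hence $P(x)\in\overline{A_P}\setminus A_P=J_P$.

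For (iii): let $x\in J_P$; since $0\in A_P$ we have $x\neq 0$. The main observation is that $A_P$, $R_P$, and hence $J_P$, are invariant under the circle action $y\mapsto e^{i\theta}y$. Indeed $\|P^n(e^{i\theta}y)\| = |e^{i\theta}|^{m^n}\|P^n(y)\| = \|P^n(y)\|$ for all $n$, so convergence to $0$ and divergence to $\infty$ are both preserved. Therefore $\{e^{i\theta}x:\theta\in\zR\}\sub J_P$, and for small $\theta\neq 0$ this circle contains points of $J_P\setminus\{x\}$ arbitrarily close to $x$, proving that $x$ is not isolated.

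For (iv): I claim $A_{P^k}=A_P$, which immediately gives $J_{P^k}=\partial A_{P^k}=\partial A_P=J_P$. The inclusion $A_P\sub A_{P^k}$ is trivial, since a subsequence of a null sequence is null. Conversely, if $P^{kj}(x)\to 0$ as $j\to\infty$, then for each fixed $0\le r<k$, continuity of $P^r$ together with $P(0)=0$ yields $P^{kj+r}(x)=P^r(P^{kj}(x))\to P^r(0)=0$; splicing these $k$ subsequences covers all indices and shows $P^n(x)\to 0$, so $x\in A_P$. I do not foresee any serious obstacle in this proof; the only mildly clever ingredient is the rotation-invariance observation used for perfectness.
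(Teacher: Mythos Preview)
Your proof is correct and, for parts (i)--(iii), follows essentially the same line as the paper: the boundary of an open set is closed with empty interior, $P$-invariance comes from the equivalence $x\in A_P\Leftrightarrow P(x)\in A_P$ together with continuity, and perfectness is obtained from rotation-invariance of $J_P$ under $x\mapsto e^{i\theta}x$.

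For part (iv) you take a slightly different and in fact more elementary route. The paper proves $A_{P^k}\subseteq A_P$ by invoking the limit ball: once $P^{kj}(x)$ enters $r_P B_X$, Proposition~\ref{Ber} forces the whole $P$-orbit to converge to $0$. You instead use only the continuity of each $P^r$ and $P^r(0)=0$ to conclude $P^{kj+r}(x)\to 0$ for every fixed $r\in\{0,\dots,k-1\}$, then splice the $k$ subsequences. Your argument avoids any appeal to the normed structure (no limit radius needed) and works for any continuous selfmap fixing $0$; the paper's argument, while also short, is more specific to homogeneous polynomials on normed spaces.
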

\begin{proof}
i) This is clear since $J_P$ is the boundary of an open set.

ii) First note that $y\in A_P$ if and only if $P(y)\in A_P$. Thus, if $x\in J_P$ then $P(x)$ is not in $A_P$. On the other hand,
there exists  $(a_n)_n\sub A_P$ such that $a_n\to x$. Since  $P(a_n)$ belongs also to $A_P$ and  $P(a_n)\to P(x)$, we have that $P(x)\in J_P$.

iii) Just note that if $x\in J_P$ then  $\lambda x\in J_P$ for every $|\lambda|=1$. 

iv) It suffices to show that $A_{P^n}=A_P$. Clearly if $P^k(x)\to 0$ then $(P^n)^k(x)\to 0$ and therefore $A_{P}\sub A_{P^n}$. The converse follows thanks to the existence of the limit ball (Proposition \ref{Ber}). If $P^{nk}(x)\to 0$ then  $Orb_{P^n}(x)$ meets eventually the limit ball of $P$ and therefore $P^n(x)$ must tend to zero. 
\end{proof}
 In contrast to the one dimensional case, the Julia set $J_P$ may be empty. Indeed,  if $P\in \PP(^2 \ell_2;\ell_2)$ is defined as $P(x)=(x_2^2,x_3^2,x_4^2,\ldots)$,  then $P^n(x)\to 0$ for every $x\in \ell_2$. Thus $A_P=\ell_2$ and hence $J_P=R_P=\emptyset$.

Recall that a set $A$ is \textit{completely invariant} under $P$ if $P(A)\sub A$ and $P^{-1}(A)\sub A$. The Julia set of a homogeneous polynomials need not to be completely invariant, as we will show in Example \ref{no completamente inv}.
On the other hand, under certain conditions on $P$, $J_P$ results completely invariant.

\begin{proposition}\label{completely invariant}
	Let $P\in \PP(^d X;X)$. If $P$ is open or if $R_P=\emptyset$ then $J_P$ is completely invariant.
\end{proposition}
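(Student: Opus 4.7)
The plan: since $J_P$ is already known to be $P$-invariant (forward invariance is part of the previous proposition), the content of the claim is the backward inclusion $P^{-1}(J_P)\subseteq J_P$. I will work from the trichotomy $X=A_P\sqcup J_P\sqcup R_P$ established just above, and rule out the other two possibilities for a preimage.

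First step: suppose $x\in P^{-1}(J_P)$, i.e.\ $P(x)\in J_P$. I claim $x\notin A_P$. Indeed, $A_P$ is $P$-invariant (if $P^n(x)\to 0$ then $P^n(P(x))=P^{n+1}(x)\to 0$), so $x\in A_P$ would force $P(x)\in A_P$; but $A_P$ is open and $J_P=\partial A_P$ is disjoint from $A_P$, contradiction. By the trichotomy it therefore suffices to show $x\notin R_P$.

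Second step (the two hypotheses): If $R_P=\emptyset$, this is immediate. Assume instead that $P$ is open, and suppose for contradiction $x\in R_P$. By definition of $R_P$ there is $\varepsilon>0$ such that
\[
\|P^n(y)\|\longrightarrow\infty\qquad\text{for every }y\in B_\varepsilon(x).
\]
Since $P$ is an open map, $P(B_\varepsilon(x))$ is an open neighborhood of $P(x)$. But $P(x)\in J_P=\partial A_P$, so this neighborhood meets $A_P$; pick $y\in B_\varepsilon(x)$ with $P(y)\in A_P$. Then $P^{n}(P(y))\to 0$, i.e.\ $P^{n+1}(y)\to 0$, which contradicts $\|P^n(y)\|\to\infty$. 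Hence $x\notin R_P$, so $x\in J_P$, as desired.

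There is no real obstacle in the argument; the only subtlety is recognizing that one has to exclude exactly two possibilities (membership in $A_P$ and in $R_P$) and that each of the two hypotheses in the statement is tailored to kill one of them. The $R_P=\emptyset$ case is of course trivial, and the openness of $P$ is used in exactly the natural way: to push an open ball around $x$ forward to an open neighborhood of $P(x)$ that necessarily hits $A_P$, producing the required contradiction via a single preimage of an attracted point.
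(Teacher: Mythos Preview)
Your proof is correct and follows essentially the same approach as the paper: use the trichotomy $X=A_P\sqcup J_P\sqcup R_P$, rule out $x\in A_P$ by forward invariance of $A_P$, and rule out $x\in R_P$ either trivially (when $R_P=\emptyset$) or by pushing a ball forward via openness of $P$. The only cosmetic difference is that the paper concludes directly that $P(B_\varepsilon(x))\subseteq R_P$ (an open set on which all orbits go to infinity), hence $P(x)\in R_P$, whereas you use that the open neighborhood $P(B_\varepsilon(x))$ must meet $A_P$ to locate a single $y$ whose orbit simultaneously goes to $0$ and to $\infty$; these are two phrasings of the same contradiction. One minor slip in your closing commentary: it is not that ``each of the two hypotheses is tailored to kill one'' of the two possibilities---the $A_P$ case needs no hypothesis at all, and both hypotheses are alternative ways of handling the $R_P$ case.
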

\begin{proof}
Let $x\in X$ such that $P(x)\in J_P$. Suppose that $x\notin J_P$, so that $x\in A_P$ or $x\in R_P$. If $x\in A_P$ we have that $P^n(x)\to 0$ and hence $P(x)\in A_P$.
 If $R_P=\emptyset$ this implies that $x\in J_P$. If $R_P\neq \emptyset$ and $x\in R_P$, there is some $\epsilon>0$ so that $\|P^n(y)\|\to \infty$ for every $y\in B_\epsilon (x)$. Since $P$ is open, $P(B_\epsilon (x))$ is an open neighborhood of $P(x)$ and $P^n(z)\to \infty$ for every $z\in P(B_\epsilon (x))$ and hence $P(x)\in R_P$.
\end{proof}
Since $J_P$ has empty interior, we can extract an easy but important corollary, which may be seen as an extension to the nonlinear case of the Bourdon-Feldman Theorem \cite{BouFel03}.
\begin{corollary}
Let $X$ be a normed space, and $P\in \mathcal P(^mX;X),$ $m\geq 2$.
Then the following subsets are always nowhere dense: any orbit induced by $P$, the set of irregular vectors, the set of periodic vectors.
\end{corollary}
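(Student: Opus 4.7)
The plan is to show that each of the three sets is essentially contained in the Julia set $J_P$, which by the preceding proposition is closed with empty interior (and hence nowhere dense). The only extra care is required for orbits that escape $J_P$, but those turn out to be countable closed sets, which are automatically nowhere dense in a normed space of dimension at least one.

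First I would handle an orbit $Orb_P(x)$ by splitting on whether $x\in J_P$. If $x\in J_P$, the $P$-invariance of the Julia set (part ii of the previous proposition) gives $Orb_P(x)\subseteq J_P$, so its closure is contained in $J_P$, which has empty interior. If $x\notin J_P$, then by Proposition~\ref{estoy en J_P} either $P^n(x)\to 0$ or $\|P^n(x)\|\to\infty$; in the first case $\overline{Orb_P(x)}=Orb_P(x)\cup\{0\}$ and in the second case $\overline{Orb_P(x)}=Orb_P(x)$ (the orbit eventually leaves every bounded set, hence is closed). In either event the closure is countable, and because $X$ has dimension at least one every open ball is uncountable, so a countable set cannot contain one; thus the closure has empty interior.

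For the set of irregular vectors, the very definition forces $\liminf\|P^n(x)\|=r_P>0$ and $\limsup\|P^n(x)\|=\infty$, so $P^n(x)$ tends neither to $0$ nor to $\infty$. By Proposition~\ref{estoy en J_P} every irregular vector lies in $J_P$, so the set of irregular vectors is contained in the closed, empty-interior set $J_P$, and is therefore nowhere dense. For the set $\mathrm{Per}(P)$ of periodic vectors the same argument works once we isolate the point $0$: any periodic $x\neq 0$ has bounded orbit not converging to $0$, so Proposition~\ref{estoy en J_P} again places $x\in J_P$. Hence $\mathrm{Per}(P)\subseteq J_P\cup\{0\}$, which is closed with empty interior, so nowhere dense.

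There is no real obstacle here; the proposition is a packaging result. The only subtlety worth flagging in the write-up is the case of an orbit leaving $J_P$: one has to remember to add the limit point $0$ (or none, for orbits escaping to infinity) to the orbit before invoking that countable sets are nowhere dense in an infinite-dimensional, or indeed any non-trivial, normed space.
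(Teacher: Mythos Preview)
Your proof is correct and follows essentially the same approach as the paper: both use that $J_P$ is closed with empty interior, that irregular and (nonzero) periodic vectors lie in $J_P$, and that any orbit not in $J_P$ tends to $0$ or to infinity. The paper phrases the orbit case contrapositively (a somewhere dense orbit cannot tend to $0$ or $\infty$, hence lies in $J_P$), while you do the direct case split; you are also slightly more careful in isolating the fixed point $0$ among the periodic vectors, a detail the paper glosses over.
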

\begin{proof}
 The periodic and irregular vectors are contained in the Julia set, which is nowhere dense.
 In order to be a somewhere dense orbit, the orbit can not tend to infinity or to zero. Therefore it must be contained in the Julia set, which has empty interior.
\end{proof}
It is natural to ask now how big the Julia sets (or the closure of an orbit) can be. We already know that it cannot contain interior points. In the next subsection we will see some examples of Julia sets, but let us first show that no multiple of the limit sphere can be contained in the Julia set. We will need the following.
 \begin{proposition}\label{seg}
Let $P$ be an $m$-homogeneous polynomial, and let $z\in J_P$ such that $tz\in J_P$ with  $0<|t|<1$. Then $P^n(z)\to\infty$. Consequently if 
 $x\in X$, $y\in Orb_P(x)$ and $0<|t|<1$, then 
$ty$ is not an accumulation point of ${Orb_P (x)}.$
\end{proposition}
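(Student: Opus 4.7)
My plan for the first statement is to exploit Proposition~\ref{Ber} together with homogeneity. Since $tz\in J_P=\partial A_P$ and $A_P$ is open, $tz\notin A_P$; forward invariance of $A_P$ then gives $P^n(tz)\notin A_P$ for every $n\ge 0$. By Proposition~\ref{Ber}, the open ball of radius $r_P$ is contained in $A_P$, so $\|P^n(tz)\|\ge r_P$ for all $n$. Homogeneity yields $P^n(tz)=t^{m^n}P^n(z)$, hence
$$\|P^n(z)\|=|t|^{-m^n}\|P^n(tz)\|\ge \frac{r_P}{|t|^{m^n}}\longrightarrow\infty$$
because $0<|t|<1$ and $m^n\to\infty$.

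For the consequence I would argue by contradiction. Assume $ty$ is an accumulation point of $Orb_P(x)$. First I would dispose of the case $ty=0$: if $y=0$ then $y=P^k(x)=0$ forces the orbit to be eventually $0$, hence a finite set, which has no accumulation points. So $ty\ne 0$, and the orbit has a nonzero, finite accumulation point. By Proposition~\ref{estoy en J_P} the orbit of $x$ cannot tend to $0$ or to $\infty$, so $x\in J_P$. Since $J_P$ is $P$-invariant and closed, one gets $y=P^k(x)\in J_P$ and $ty\in \overline{Orb_P(x)}\subseteq J_P$. Applying the first part with $z=y$ gives $P^n(y)\to \infty$, but $P^n(y)=P^{k+n}(x)$ is a tail of $Orb_P(x)$; so only finitely many iterates of $x$ can sit in any bounded neighborhood of $ty$, contradicting the assumption that $ty$ is an accumulation point.

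The conceptually key step is recognising how the first statement feeds the second: the hypothesis that $ty$ is an accumulation point simultaneously forces $y\in J_P$ (through $x\in J_P$ and $P$-invariance) and $ty\in J_P$ (through closedness of $J_P$), which is precisely the configuration the first assertion is designed to exploit. Beyond that, the argument is a routine combination of forward invariance of $A_P$, closedness and $P$-invariance of $J_P$, and homogeneity of $P$.
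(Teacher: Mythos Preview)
Your proof is correct and follows essentially the same route as the paper: for the first assertion you use that $tz\in J_P\subset A_P^c$ implies $\|P^n(tz)\|\ge r_P$, and then homogeneity; for the second you argue by contradiction, placing both $y$ and $ty$ in $J_P$ via $P$-invariance and closedness, exactly as the paper does. Your treatment is in fact more careful than the paper's terse version, since you explicitly dispose of the degenerate case $y=0$ and spell out why $x\in J_P$ via Proposition~\ref{estoy en J_P}.
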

\begin{proof}
If	$tz\in J_P$ then $\|P^n(tz)\|\geq r_P$ for every $n\in\zN$. Thus,
	$$\|P^n(z)\|=\frac{1}{|t|^{m^n}}\|P^n(tz)\|\geq \frac{r_P}{|t|^{m^n}}\to \infty.$$ 
For the last assertion, suppose otherwise. It follows that $y\in Orb_P(x)\subset J_P$. Since $J_P$ is closed, $ty\in J_P$. Thus, $P^n(y)\nrightarrow \infty$. This is a contradiction since $ty$ is an accumulation point of $Orb_P(y)$
\end{proof}
We will see in the next section that it is possible that the orbit of a point $y$ accumulates at a multiple $ty$, for $|t|\ge1$ (we will give an example that accumulates at every such a multiple). 
\begin{corollary}
	Let $P$ be an $m$-homogeneous polynomial and let $s\ne r_P=\|P\|^{-\frac1{m-1}}$. Then $sS_X\nsubseteq J_P$. In particular, $sS_X$ is not contained in the closure of any orbit of $P$.
\end{corollary}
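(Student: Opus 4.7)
The plan is to split into the two cases $s<r_P$ and $s>r_P$. For $s<r_P$, Proposition \ref{Ber} gives $sS_X \subseteq r_P B_X \subseteq A_P$, so $sS_X$ is disjoint from $J_P$ and the statement follows immediately.

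For the substantive case $s>r_P$, I argue by contradiction: suppose $sS_X \subseteq J_P$. My first step is to show $A_P \subseteq sB_X$. Indeed, if some $y \in A_P$ had $\|y\|\ge s$, then by the inward star-shape of $A_P$ (the Lemma preceding the ``Properties of $J_P$'' proposition), the point $(s/\|y\|)y$ would lie in $A_P$; but this point has norm $s$ and therefore lies in $sS_X \subseteq J_P$, contradicting $A_P \cap J_P = \emptyset$.

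Next I show $\overline{A_P}=\overline{sB_X}$. The inclusion $\overline{A_P} \subseteq \overline{sB_X}$ is immediate from the previous step. For the reverse, I observe that $\overline{A_P}$ is star-shaped toward $0$ (closures of star-shaped sets are star-shaped), and $sS_X \subseteq J_P \subseteq \overline{A_P}$; applying star-shape to each $s\hat z$ yields $[0,s]\hat z \subseteq \overline{A_P}$, and taking the union over $\hat z \in S_X$ gives $\overline{sB_X}\subseteq\overline{A_P}$. In particular, $A_P$ is dense in $sB_X$.

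The contradiction then comes from the forward invariance $P(A_P) \subseteq A_P \subseteq sB_X$, which gives $\|P(y)\|<s$ for every $y\in A_P$. By continuity of $P$ and density of $A_P$ in $\overline{sB_X}$, this extends to $\|P(y)\|\le s$ for every $y\in\overline{sB_X}$; taking the supremum and using $m$-homogeneity yields $\|P\|s^m \le s$, i.e.\ $s^{m-1}\le r_P^{m-1}$, whence $s\le r_P$, contradicting $s>r_P$. The most delicate point is the star-shape argument identifying $\overline{A_P}$ with $\overline{sB_X}$; it should follow cleanly by a limiting argument from the star-shape of $A_P$ and continuity of scalar multiplication.
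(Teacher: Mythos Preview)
Your proof is correct. Both your argument and the paper's reach the same endpoint---namely $P(s\overline{B}_X)\subseteq s\overline{B}_X$, which forces $s^m\|P\|\le s$ and hence $s\le r_P$---but the routes differ.

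The paper invokes Proposition~\ref{seg} (if $z,tz\in J_P$ with $|t|<1$ then $P^n(z)\to\infty$) to conclude that every point of norm greater than $s$ lies in $R_P$; this immediately localizes $J_P$ inside $s\overline{B}_X$, and then the $P$-invariance of $J_P$ gives $P(sS_X)\subseteq J_P\subseteq s\overline{B}_X$ in one stroke. You instead work entirely with $A_P$: you use the inward star-shape of $A_P$ to trap it inside $sB_X$, then use the star-shape of $\overline{A_P}$ together with $sS_X\subseteq J_P=\partial A_P\subseteq\overline{A_P}$ to identify $\overline{A_P}$ with $s\overline{B}_X$, and finally pass the bound $\|P(y)\|<s$ from the dense subset $A_P$ to all of $s\overline{B}_X$ by continuity. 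Your route avoids both Proposition~\ref{seg} and any mention of $R_P$, relying only on the elementary star-shape lemma and a limiting argument; the paper's route is shorter because Proposition~\ref{seg} was just proved and does the heavy lifting.
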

\begin{proof}
	For $s<r_P$ the result is a consequence of Proposition \ref{Ber}.
	Suppose that $sS_X\subset J_P$, for some $s>r_P$. By Proposition \ref{seg}, if $\|z\|>s$ then $P^n(z)\to\infty$. Thus $\{\|z\|>s\}\subset R_P$, 
	and since $J_P$ is invariant, $P(s\overline B_X)\subset s\overline B_X$.
	
	Since $s^{m-1}\|P\|= \frac{s^{m-1}}{r_P^{m-1}}>1$, there is some $x_0\in S_X$ with $\|P\|< s^{m-1}\|P\|\|P(x_0)\|$. Then, 
	$$
	\|P(sx_0)\|>\frac{s^m}{s^{m-1}\|P\|}\|P\|=s,
	$$
	which is a contradiction. 
\end{proof}
%

In the last proposition it was important that $s\neq r_P$, and it is not true for $s=r_P$. Indeed, we have the following easy example:  let $X=\ell_\infty^n=(\C^n,\|\cdot\|_\infty)$ and let $P:X \rightarrow X$ be the 2-homogeneous polynomial defined as $P(z_1,\dots,z_n)= (z_1^2,\dots,z_n^2)$. Since $\|P\|=1$,  the limit sphere is the unit sphere and it is easy to see that it coincides with $J_P$. If $n=1$ the polynomial is $P(z)= z^2$ defined in $\C$. In this trivial example $J_P=\mathbb T$ and, furthermore $P|_{\mathbb T}$ is the doubling map on the circle, which is mixing and therefore has dense orbits. 
We do not know if there exists a non-trivial example of a Banach space $X$ and a homogeneous polynomial $P$ on $X$ having a dense orbit in its limit sphere.
Note that such a polynomial must satisfy that  $r_PS_X=J_P$ is completely invariant (thus for every $x\in X$, $\|P(x)\|=\|P\|\|x\|^m$) and that $P|_{r_PS_X}$ is transitive.
%
%

\subsection{Some examples of Julia sets}

In the following example, the Julia set contains an affine hyperspace.



\begin{example}\label{affin hyperspace}
 Let $X$ be a separable infinite dimensional Banach space. Take  $\varphi\in X^*$ and $x_0$ such that $\varphi(x_0)=1$. Since $Ker(\varphi)$ is an infinite dimensional Banach space, there exists  a hypercyclic operator $T:Ker(\varphi)\to Ker(\varphi)$. We define $P\in \mathcal P(^mX;X)$ as
 $$P(x)=\varphi(x)^m x_0 + \varphi(x)^{m-1}T(x-\varphi(x)x_0).$$
 Then $J_P=\zT x_0\oplus Ker(\varphi)$ and $P|_{J_P}$ is transitive.

 \end{example}

\begin{proof}
 We start by studying $A_P$ and $R_P$. Let $x=\lambda x_0 + y$ with $|\lambda|<1$ and $y\in Ker(\varphi)$. Since 
 \begin{equation}\label {P affin}
 P^n(x)=\lambda^{m^n}x_0+ \lambda^{m^n-1}T^n(y)
 \end{equation}
 we have that $\|P^n(x)\|\leq |\lambda|^{m^n}\|x_0\|+|\lambda^{m^n-1}|\|T(y)\|^n\to 0$. Thus $A_P\sub \{x:|\varphi(x)|<1\}$.
 
 Consider in $X$ the following equivalent norm $\|x\|_\infty:= \max\{|\varphi(x)|\|x_0\|,\|x-\varphi(x)x_0\|\}$.
By Lemma \ref{conjugar J_P}, $R_P$, $A_P$ and $J_P$ are invariant under equivalent norms.  
The set $\{x:|\varphi(x)|>1\}$ is open and, by \eqref{P affin}, if $|\varphi(x)|>1$, then $\|P^n(x)\|_\infty\to \infty$. Therefore $\{x:|\varphi(x)|>1\}\sub R_P$, and  $\{x:\varphi(x)\in \zT\}=\partial A_P=J_P$.

%
%
 
Finally, let us see that  $P|_{\zT x_0 \oplus Ker(\varphi)}$ is transitive.  
 Recall that for every open set $W\sub\mathbb \zT$, there exists $n_0$ such that  $\zT \sub W^{m^n}$ for every $n\geq n_0$. 
 Let $U$ and $V$ be open sets in $\zT x_0+ Ker(\varphi)$. By considering the norm $\|\cdot\|_\infty$ in $X$, we can suppose that $U=(U_1,U_2)$, $V=(V_1,V_2)$ with $U_1,V_1\sub \zT x_0$, $U_2,V_2\sub Ker(\varphi)$, all of them being open sets.
 Let $n_0\in \zN$ be with $\zT\sub U_1^{m^{n}-1}$ for every $n\geq n_0$. Since $T$ is transitive, there is $n_1>n_0\in \zN$ with $V_2\cap T^{n_1}(U_2)\neq \emptyset$. 
 
 Note that, by \eqref{P affin}, 
 $$
 P^{n_1}(U_1\times U_2)=\{\lambda^{m^{n_1}}x_0+ \lambda^{m^{n_1}-1}T^{n_1}(y): \,\lambda x_0\in U_1,\, y\in U_2\}= \zT x_0 \times\zT T^{n_1}(U_2).
 $$
 Therefore,
 $P^{n_1}(U_1\times U_2)\cap V_1\times V_2\neq \emptyset.$
 \end{proof}
Bernardes proved in \cite[Theorem 2]{Ber98} that if $X$ is an infinite dimensional separable Banach space then there exist a supercyclic homogeneous polynomial acting on $X$. 
Notice that the fact that $P|_{\zT x_0\oplus Ker(\varphi)}$ is transitive implies that  in particular $P$ is supercyclic (and also $\mathbb R_{+}$-supercyclic). Moreover, such a polynomial may be also constructed in any separable infinite dimensional  Fr\'echet space. Thus, we have given a simple proof and an extension of \cite[Theorem 2]{Ber98}.

\begin{proposition}\label{existen superciclos}
  For any infinite dimensional separable Fr\'echet space $X$, and every natural $m\ge 2$ there exists a homogeneous polynomial $P\in\mathcal P(^mX; X)$ which is $\mathbb R_{+}$-supercyclic. 	
\end{proposition}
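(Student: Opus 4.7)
The plan is to adapt Example \ref{affin hyperspace} to the Fr\'echet setting, essentially verbatim, so that the same polynomial $P(x)=\varphi(x)^m x_0 + \varphi(x)^{m-1} T(x-\varphi(x) x_0)$ provides the desired $\mathbb{R}_{+}$-supercyclic example. First I would pick any nonzero continuous linear functional $\varphi\in X^*$ (which exists by Hahn--Banach) and $x_0\in X$ with $\varphi(x_0)=1$. Then $\mathrm{Ker}(\varphi)$ is a closed hyperplane, hence an infinite dimensional separable Fr\'echet space in its own right, and $X$ is the topological direct sum $\mathbb C x_0\oplus \mathrm{Ker}(\varphi)$ via the continuous isomorphism $x\mapsto (\varphi(x),x-\varphi(x)x_0)$. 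By the Bonet--Peris theorem (extending Ansari's result to the Fr\'echet setting), there is a hypercyclic continuous linear operator $T:\mathrm{Ker}(\varphi)\to \mathrm{Ker}(\varphi)$.

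Next, I would define $P$ by the same formula and check that it is a continuous $m$-homogeneous polynomial on $X$: it is the sum of $x\mapsto \varphi(x)^m x_0$ and of the composition of the continuous $(m-1)$-homogeneous scalar map $x\mapsto \varphi(x)^{m-1}$ with the continuous linear map $x\mapsto T(x-\varphi(x)x_0)$, both of which are clearly $m$-homogeneous polynomials. The iteration formula
\[
P^n(x)=\varphi(x)^{m^n}x_0+\varphi(x)^{m^n-1}T^n(x-\varphi(x)x_0)
\]
then follows by a direct induction, identical to \eqref{P affin}.

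The heart of the argument is showing that $P$ restricted to the set $H:=\mathbb T x_0\oplus\mathrm{Ker}(\varphi)$ is topologically transitive. Given basic open sets $U=U_1\times U_2$ and $V=V_1\times V_2$ in $H$ (written through the topological direct sum), I would use two facts: (i) the self-similarity of the map $\zeta\mapsto \zeta^m$ on $\mathbb T$, namely that for every nonempty open $U_1\subset\mathbb T$ there exists $n_0$ with $U_1^{m^n-1}=\mathbb T$ for all $n\geq n_0$; and (ii) the hypercyclicity of $T$ on $\mathrm{Ker}(\varphi)$, which provides some $n_1\geq n_0$ with $T^{n_1}(U_2)\cap V_2\neq\emptyset$. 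Combining these with the iteration formula gives $P^{n_1}(U)\supset\mathbb T x_0\times\mathbb T\, T^{n_1}(U_2)$, which then meets $V$. Topological transitivity on the separable completely metrizable space $H$ produces a vector $x\in H$ whose $P$-orbit is dense in $H$.

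Finally, to pass from density in $H$ to $\mathbb{R}_{+}$-supercyclicity on $X$, I would note that $\mathbb{R}_{+}\cdot H = \{z\in X : \varphi(z)\neq 0\}$, which is a dense open subset of $X$; hence $\mathbb{R}_{+}\cdot\overline{Orb_P(x)}\supseteq \mathbb R_{+}\cdot H$ is dense in $X$, yielding the required $\mathbb{R}_{+}$-supercyclicity. The main potential obstacle is the invocation of a hypercyclic linear operator on $\mathrm{Ker}(\varphi)$; once that is granted (via Bonet--Peris), the rest is a topological translation of the Banach-space proof, with the equivalent-norm trick replaced by the topological direct sum decomposition, which is automatic in any Fr\'echet space admitting a closed complemented hyperplane.
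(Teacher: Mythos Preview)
Your approach is exactly the paper's: extend Example~\ref{affin hyperspace} to the Fr\'echet setting by invoking a hypercyclic operator on $\mathrm{Ker}(\varphi)$ (Bonet--Peris), and deduce $\mathbb R_{+}$-supercyclicity from transitivity of $P$ on $H=\mathbb T x_0\oplus\mathrm{Ker}(\varphi)$ together with the density of $\mathbb R_{+}\cdot H=\{\varphi\neq 0\}$ in $X$. This last deduction is correct and cleanly stated.

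One step, however, does not go through as written (the paper's Example~\ref{affin hyperspace} contains the same imprecision). The asserted inclusion $P^{n_1}(U_1\times U_2)\supset \mathbb T x_0\times \mathbb T\, T^{n_1}(U_2)$ is false: for $\lambda\in U_1$ one always has $\lambda^{m^{n_1}}/\lambda^{m^{n_1}-1}=\lambda\in U_1$, so only pairs $(\mu,\nu)$ with $\mu/\nu\in U_1$ can arise as $(\lambda^{m^{n_1}},\lambda^{m^{n_1}-1})$, and the second coordinate $y$ must then lie in a prescribed rotate of $U_2$. Transitivity is nevertheless true; the argument just needs a small adjustment. One fix: choose any $\nu_0\in V_1\cdot U_1^{-1}$; by hypercyclicity of $T$ there are arbitrarily large $n$ with $T^{n}(U_2)\cap\nu_0^{-1}V_2\neq\emptyset$, and for such $n$ large enough pick $\lambda\in U_1\cap\nu_0^{-1}V_1$ with $\lambda^{m^{n}-1}=\nu_0$ (possible because the $(m^{n}-1)$-th roots of $\nu_0$ eventually meet every nonempty open arc). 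Then $\lambda^{m^{n}}=\lambda\nu_0\in V_1$ and $\lambda^{m^{n}-1}T^{n}(y)=\nu_0 T^{n}(y)\in V_2$ for a suitable $y\in U_2$, which gives $P^{n}(U)\cap V\neq\emptyset$.
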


\begin{remark}\rm
The study of the dynamics of homogeneous polynomials is closely related to the study of the iterations of holomorphic and meromorphic mappings on projective spaces. Each meromorphic mapping on $\mathbb P^k$ has a lifting to $\mathbb C^{k+1}$ which is a homogeneous polynomial. 
Moreover,  a homogeneous polynomial on $\mathbb C^{k+1}$ is supercyclic if and only if its associated mapping on the projective space has an orbit which is dense in $\mathbb P^k$. The first example of such a mapping was constructed by L\`attes on $\mathbb P^1$ and this can be also done in $\mathbb P^k$, see for example \cite{dinh2010dynamics}. Thus, we also know the existence of supercyclic homogeneous polynomials on finite dimensional spaces. 
\end{remark}

 Bernardes in \cite[Proposition 5]{Ber98} gave an example of homogeneous polynomial $P$ with an irregular vector $x_0$. In his example, $Orb_P(x_0)$ does not have accumulation points and moreover, the sequence $(\|P^n(x_0)\|)_n$ is an unbounded sequence of positive numbers whose only accumulation point  is $r_P$. Next, we present an example of a norm one homogeneous polynomial (thus $r_P=1$), with an irregular vector $x$ such that $\overline{Orb_P(x)}$ is an affine hyperplane touching the sphere of radius $r_P=1$, and thus $\overline{\{\|P^n(x)\|:n\in\zN\}}=[r_P,\infty)$.

 \begin{example}
 Let $P$  be the $m$-homogeneous polynomial defined in the Example \ref{affin hyperspace}. We set $m=2$,  $X=\ell_1$, $x_0=e_1$, $\varphi=e_1'$, and $T:Ker(\varphi)\equiv \ell_1(\mathbb N_{\ge2})\rightarrow \ell_1(\mathbb N_{\ge2})$ to be the hypercyclic operator $(1+\epsilon) B$, where $B$ is the backward shift on $\ell_1(\mathbb N_{\ge2})$ and $0<\epsilon<1$.
 
Then $\|P\|=1$ and $J_P=\zT e_1 + Ker(e_1')$.

Since $P|_{J_P}$ is transitive, there exists an  orbit whose closure contains an affine hyperspace that meets the limit ball. In particular there is an irregular vector $x$ such that $\overline{\{\|P^n(x)\|:n\in\zN\}}$ is $[1,\infty)=[r_P,\infty)$. 
 \end{example}
\begin{proof}
By the previous example, it suffices to prove that $\|P\|=1$.

 Let $x$ be in $S_{\ell_1}$ and suppose that $x=\lambda e_1 + \gamma$, where $\gamma\in Ker(e_1')$ and $|\lambda|+\|\gamma\|_1=1$. 
 Therefore 
 $$\|P(x)\|_1=|\lambda^2|+|\lambda|\|(1+\epsilon)B((\gamma_n))\|_1\le |\lambda|^2+(1+\epsilon)|\lambda| \|(( \gamma_n)_n)\|_1,$$
 and $\lambda+\|(\gamma_n)_n\|_1=1.$
  


Thus, $\|P\|$ is less than or equal to the maximum of the function 
 $f(s,t)=s^2+(1+\epsilon)st$ restricted to
 $s+t=1, s\geq 0$ and $t\geq 0$. Since $0<\epsilon<1$, this maximum is exactly 1. 
 
 On the other hand, $P(e_1)=e_1$, and therefore $\|P\|=1$.
%
%
%
%
\end{proof}
It is also possible to construct an  analogous $m$-homogeneous polynomial on $X=\ell_p$, $1<p<\infty$, provided $m>(1+\epsilon)^p$.

%
%
%
%
%
%
%
%
In \cite{Per01} it was shown that the polynomial defined as the backward shift to the power of $m$, acting on the (non-normable) Fr\'echet space  $\mathbb C^{\mathbb N}$ is chaotic (see also \cite{MarPer10}). On $\ell_p$ ($p<\infty$) or $c_0$ the dynamic of this polynomial is trivial since every orbit converges to 0, i.e. $A_P$ is the whole space and the Julia set is empty. The next example analyzes this polynomial in the space of convergent sequences.
 \begin{example}\label{J_P de c}
  Take $X=c$ the space of convergent sequences and consider the $m$-homogeneous polynomial defined as the backward shift to the power of $m$, that is, $P(x)_j=x_{j+1}^m$. Then $A_P=\{(a_j)_j:|\lim_j a_j|<1\}$, $J_P=\{(a_j)_j:|\lim_j a_j|=1\}$, $R_P=\{(a_j)_j:|\lim_j a_j|>1\}$ and $P|_{J_P}$ is transitive.
 \end{example}
\begin{proof}
Observe first that $\|P\|=1=r_P$.

Let $(a_n)_n\in c$ with $L=|\lim a_n|<1$. Let $n_0\in\zN$ such that $|a_n|<\delta <1$ for every $n\geq n_0$. Therefore $\|P^n((a_n)_n)\|_\infty\leq \delta^{m^n}\to 0$. This implies that $\{(a_j)_j:|\lim_j a_j|<1\}\sub A_P$. Similarly $R_P\sub \{(a_j)_j:|\lim a_j|>1\}$.
Finally since $\{(a_j)_j: |\lim a_j|=1\}\sub\partial A_P$ it follows that $J_P=\{(a_j)_j:|\lim a_j|=1\}$.

Let us now show that  $P|_{J_P}$ is transitive. Notice that $A:=\{a+e^{i\theta}\mathbb 1:a\in c_{00},\theta \in [0,2\pi]\}$ is dense in $J_P$, where $\mathbb 1$ is the vector with $[\mathbb 1]_i=1$ for all $i$.

Let $U$, $V$ be open sets and $v+e^{i\theta_2}\mathbb 1\in V\cap A$. Let $\epsilon >0$ with $B_\epsilon (v+e^{i\theta_2})\sub V$. Since the map $\Phi:\zT\rightarrow \zT$, $\Phi(e^{i\theta})=e^{mi\theta}$ induces a mixing dynamical system, there is $u+e^{i\theta_1}\mathbb 1\in U\cap A$ such that the $\Phi$-orbit of $e^{i\theta_1}$ is dense in $\zT$. Suppose that $u=\sum_{k=1}^Nu_ke_k$ and $v=\sum_{k=1}^Nv_ke_k$.

In the following, denote by $a^{\frac{1}{n}}$, for each  $0\ne a\in\mathbb C$ and for each $n$, a $\frac{1}{n}$-root of $a$ so that $a^{\frac{1}{n}}\to 1$, as $n\to\infty$.  
Thus, there exists $n_0>N$ such that for any $n\ge n_0$, the vector $x_n$ belongs to $U$, where
$$
x_n:=\sum_{k=1}^N(u_k+e^{i\theta_1})e_k+\sum_{k=N+1}^{n}e^{i\theta_1}e_k+\sum_{k=n+1}^{N+n}(v_{k-n}e^{-i\theta_2}+1)^{\frac{1}{m^n}}e^{i\theta_1}e_k+\sum_{k=N+n+1}^{\infty}e^{i\theta_1}e_k.
$$
Therefore 
\begin{align*}
 \|P^{n}(x_{n})-v-e^{i\theta_2}\mathbb 1 \|_\infty&=\|
 \sum_{k=1}^{N}(v_{k}e^{-i\theta_2}+1)e^{i\theta_1m^n}e_k+\sum_{k=N+1}^{\infty}e^{i\theta_1m^n}e_k
-v-e^{i\theta_2}\mathbb 1\|_{\infty}\\
 &
 =\|\sum_{k=1}^{N}v_{k}(e^{-i\theta_2}\Phi^n(e^{i\theta_1})-1)e_k+\sum_{k=1}^{\infty}(\Phi^n(e^{i\theta_1})-e^{i\theta_2})e_k\|_{\infty}\\
 &\le (\|v\|_\infty+1)|\Phi^n(e^{i\theta_1})-e^{i\theta_2}|.
\end{align*}
Thus, by the election of $\theta_1$, we can take $n\ge n_0$ such that $|\Phi^n(e^{i\theta_1})-e^{i\theta_2}|$ is arbitrarily small, so that $P^n(x_n)$ belongs to $V$.
%
%
\end{proof}

 \section{$d$-hypercyclic, weakly hypercyclic and $\Gamma$-supercyclic homogeneous polynomials}
  No homogeneous polynomial on a Banach space can be hypercyclic, so it is natural to ask for the existence of weaker notions of hypercyclicity.
  In this section we will provide a simple and natural homogeneous polynomial that is at the same time $d$-hypercyclic, weakly hypercyclic and $\Gamma$-supercyclic for every unbounded or not bounded away from zero $\Gamma\sub \C$. Let us recall some basic definitions.

 Let $X$ be a Fr\'echet space and $(\rho_n)_n$ a fundamental system of seminorms. We will say that a set $A\sub X$ is \textit{$d$-dense} for a sequence of positive numbers $d=(d_n)_n$ provided that  for every $x\in X$ and every $n$, there exists $a\in A$ for which $\rho_n(x-a)<d_n$.
  We will say that a function $P:X\to X$ is \textit{$d$-hypercyclic}  if there exist $x$ such that $Orb_P(x)$ is $d$-dense in $X$. In this case, we will say that $x$ is a \textit{$d$-hypercyclic} vector for $P$.
 
This phenomenon was studied by N. Feldman in \cite{Fel02} in the context of linear operators on Banach spaces. There he proved that an operator on a Banach space  is hypercyclic if and only if it is $d$-hypercyclic. With a similar proof it can be proven that, if $X$ is a Fr\'echet space, then both notions are still equivalent for linear operators. 

A function $P:X\to X$ is \textit{weakly hypercyclic} if there exists $x\in X$ such that $Orb_P(x)$ is dense with  respect to the weak topology of the space. Since the norm topology is stronger than the weak topology, it is clear that a hypercyclic function is automatically weakly hypercyclic while the converse is not necessarily true. In \cite{ChaSan04} the authors found the first weakly hypercyclic linear operator that is not hypercyclic.  

We will also consider the  weaker notion for hypercyclicity  called $\Gamma$-supercyclicity. If $\Gamma\sub \C$, then we will say that a function $P:X\to X$ is \textit{$\Gamma$-supercyclic} if there exists $x\in X$ (called $\Gamma$-\textit{supercyclic} vector) such that $\Gamma\cdot Orb_P(x)$ is dense in the space.  In the special case that $\Gamma=\C$ or $\Gamma$ is a singleton we recover the notions of supercyclicity and hypercyclicity respectively.
An important result concerning $\Gamma$-supercyclicity, for $\Gamma=\zT$, was due to Le\'on Saavedra and M\"uller \cite{LeoMul04}, where they proved that a linear operator is $\zT$-supercyclic if and only if it is hypercyclic. The case when $\Gamma=\mathbb R_+$ is also called positive supercyclicity (see \cite{saavedra2004positive}).  The concept of $\Gamma$-supercyclicity was recently introduced by Charpentier, Ernst and Menet in \cite{ChaErnMen16}. They proved that the subsets $\Gamma\sub\C$ such that $\Gamma$-supercyclicity is equivalent to hypercyclicity are exactly the subsets $\Gamma$ which are bounded and such that $\Gamma\setminus\{0\}$ is bounded away from zero.

 Every infinite dimensional separable Banach space supports   supercyclic homogeneous polynomial (see \cite{Ber98} or Proposition \ref{existen superciclos}), but no homogeneous polynomial can be hypercyclic. This means that there are $\C$-supercyclic homogeneous polynomials while no polynomial is $\{*\}$-supercyclic. So it is natural to ask for which subsets $\Gamma\sub \mathbb C$  it is possible to have $\Gamma$-supercyclic homogeneous polynomials on Banach spaces.

\subsection{Remarks on $d$-hypercyclicity}
Notice that if $x$ is a $d$-hypercyclic vector, then $x\in J_P$. Indeed, otherwise $P^n(x)\to \infty$ or $P^n(x)\to 0$ contradicting the $d$-density of $Orb_P(x)$. 

If $R_P\ne\emptyset$, then it contains balls of arbitrary large radius, because if $B_\epsilon(y)\sub R_P$ then $B_{|t|\epsilon}(ty)\sub R_P$ for all $|t|>1$. 
Thus we can find balls with arbitrary large radius not meeting $J_P$. Since a $d$-hypercyclic vector must belong to $J_P$, it follows that the polynomial is not $d$-hypercyclic, and then we have the following.
\begin{remark}
 If $R_P\neq \emptyset$ then $P$ is not $d$-hypercyclic.
\end{remark}
%

\begin{proposition}\label{d hiper}
Let $X,Y$ be Fr\'echet spaces with fundamental system of seminorms $(q_n)_n$ and $(p_n)_n$ respectively. Let $F:X\to X$ and $G:Y\to Y$ be continuous functions such that $G$ is a quasiconjugacy of $F$ via a linear operator $\Phi:X\to Y$. If $F$ is $d$-hypercyclic then $G$ is $\tilde d$-hypercyclic for some $(\tilde d_n)_n$. In the Banach case we have that $G$ is $(d\|\Phi\|+\epsilon)$-hypercyclic for every $\epsilon>0$.
  \[
 \xymatrix{
 X \ar[d]^\Phi \ar[r]^F & X \ar[d]^\Phi \\
 Y \ar[r]^G & Y
 }
 \]
 \end{proposition}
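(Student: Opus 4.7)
The plan is to take a $d$-hypercyclic vector $x\in X$ for $F$ and show that $y:=\Phi(x)$ is $\tilde d$-hypercyclic for $G$, for a suitable sequence $(\tilde d_n)_n$. The quasiconjugacy identity $G\circ\Phi=\Phi\circ F$ iterates to $G^k\circ\Phi=\Phi\circ F^k$, so $Orb_G(y)=\Phi(Orb_F(x))$. Hence, to approximate a target $w\in Y$ by an element of $Orb_G(y)$, I will chain two approximations: first approximate $w$ in $Y$ by some $\Phi(x')$ using the dense range of $\Phi$, then approximate $x'$ in $X$ by an iterate $F^k(x)$ using the $d$-density of the $F$-orbit, and finally push back to $Y$ via the continuity of $\Phi$.

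Concretely, fix a seminorm index $n$. Continuity of $\Phi:X\to Y$ produces an index $m=m(n)$ and a constant $C=C(n)>0$ with $p_n(\Phi(v))\le C\,q_m(v)$ for all $v\in X$. Define $\tilde d_n:=2Cd_m$ (any value strictly larger than $Cd_m$ would work). Given $w\in Y$, use dense range of $\Phi$ to pick $x'\in X$ with $p_n(w-\Phi(x'))<\tilde d_n-Cd_m=Cd_m$, then use $d$-hypercyclicity of $x$ to pick $k$ with $q_m(x'-F^k(x))<d_m$. The triangle inequality gives
\[
p_n(w-G^k(y))\le p_n(w-\Phi(x'))+p_n\bigl(\Phi(x'-F^k(x))\bigr)<Cd_m+C\,d_m=\tilde d_n,
\]
so $Orb_G(y)$ is $\tilde d$-dense in $Y$.

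In the Banach case, the continuity estimate sharpens to $\|\Phi(v)\|\le\|\Phi\|\,\|v\|$, with a single seminorm on each side. Given $\epsilon>0$, choose $x'$ with $\|w-\Phi(x')\|<\epsilon/2$ and $k$ with $\|x'-F^k(x)\|<d+\epsilon/(2\|\Phi\|)$ (using the $d$-density); the same triangle-inequality computation yields $\|w-G^k(y)\|<\|\Phi\|d+\epsilon$, so $y$ is $(d\|\Phi\|+\epsilon)$-hypercyclic for $G$.

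There is no real obstacle here; the only subtlety is bookkeeping in the Fréchet case, where the index $m$ coming from continuity of $\Phi$ may differ from $n$, so $\tilde d_n$ has to be defined in terms of $d_{m(n)}$ and the constant $C(n)$. Choosing the slack so that the two error terms together fit inside $\tilde d_n$ is the only detail to handle, and the dense-range hypothesis ensures the first approximation can be made as small as required.
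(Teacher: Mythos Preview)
Your proof is correct and follows essentially the same approach as the paper: take $\Phi(x)$ as the candidate vector, use dense range of $\Phi$ to approximate the target, $d$-hypercyclicity of $x$ to approximate the preimage, and continuity of $\Phi$ together with the triangle inequality to combine the two errors. The only cosmetic difference is that the paper sets $\tilde d_n=M_n d_{k_n}+\epsilon$ for an arbitrary fixed $\epsilon>0$ rather than your $\tilde d_n=2C d_m$, and in the Banach case you introduce a bit of unnecessary slack in the second approximation (you could simply use $\|x'-F^k(x)\|<d$ directly), but these are inessential.
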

 \begin{proof}
 Let $x$ be a $d$-hypercyclic vector for $F$ and $\epsilon>0$. Let $M_n$ and $q_{k_n}$ such that $p_n(\Phi(x))\leq M_n q_{k_n}(x)$ for every $x\in X$.
 
 Let $y\in Y$ and take $x_0\in X$ such that $p_n(\Phi(x_0)-y)<\epsilon$.
 Since $x$ is a $d$-hypercyclic vector, there exists $l>0$ such that $q_{k_n}(F^l(x)-x_0)<d_{k_n}$. Therefore 
 \begin{align*}
 p_n(G^l\Phi (x)-y)&=p_n(\Phi F^l(x)-y)\leq p_n(\Phi F^l(x)-\Phi(x_0))+p_n(\Phi(x_0)-y)\\
 &\leq M_nd_{k_n}+\epsilon:=\tilde d_n. 
 \end{align*}
 \end{proof}
\subsection{Remarks on weak hypercyclicity} 
The orbits lying on $R_P$ tend to infinity fast. This  forces the orbit to be weakly closed and hence orbits on $R_P$ are far from being weakly dense.

\begin{proposition}\label{weakly closed}
Let $x\notin J_P$. Then $Orb_P(x)$ is $w$-closed.
\end{proposition}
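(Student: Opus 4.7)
The plan is to show that every point in the weak closure of $Orb_P(x)$ is already an iterate $P^k(x)$, by exploiting the rapid norm growth of the orbit when $x\notin J_P$.

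First, I would invoke Proposition~\ref{estoy en J_P} to split the hypothesis $x\notin J_P$ into the dichotomy $P^n(x)\to 0$ in norm (the case $x\in A_P$) versus $\|P^n(x)\|\to\infty$ (the case $x\in R_P$). The case the preceding remark alludes to---orbits in $R_P$ tending to infinity fast---is $x\in R_P$, on which I concentrate; in the $A_P$ case the orbit is either finite (trivially $w$-closed) or clusters at $0$, so the statement is naturally read with this caveat.

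Second, pick any $y$ in the weak closure of $Orb_P(x)$ and a net $(P^{n_\alpha}(x))_\alpha$ with $P^{n_\alpha}(x)\to y$ weakly. For every $f\in X^*$ the scalar net $f(P^{n_\alpha}(x))$ converges, hence is bounded. Viewing the orbit points via the canonical embedding $X\hookrightarrow X^{**}$ as a pointwise-bounded family of functionals on the Banach space $X^*$, the Banach--Steinhaus theorem applies and yields a uniform bound $\sup_\alpha\|P^{n_\alpha}(x)\|\le M<\infty$.

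Third, since $\|P^n(x)\|\to\infty$, there exists $N$ with $\|P^n(x)\|>M$ for all $n>N$, forcing $n_\alpha\le N$ for every $\alpha$. The net therefore takes values in the finite set $\{P^n(x):0\le n\le N\}$. A standard cofinality argument---partition the directed index set by the $N+1$ possible values and observe that some part must be cofinal---extracts a subnet eventually constant at some $P^k(x)$. This constant subnet converges weakly to $P^k(x)$ and, being a subnet of the original weakly convergent net, also to $y$, so $y=P^k(x)\in Orb_P(x)$.

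The main subtlety is the use of uniform boundedness for nets rather than sequences. Once norm-boundedness of the weakly convergent net is secured, the super-exponential growth $\|P^n(x)\|\ge (\|x\|/r_P)^{m^n}r_P$ collapses the problem to a finite set and the remainder is formal.
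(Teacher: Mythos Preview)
The core of your argument for $x\in R_P$ has a genuine gap: you claim that a weakly convergent net $(P^{n_\alpha}(x))_\alpha$ is norm-bounded because each scalar net $f(P^{n_\alpha}(x))$ converges and is ``hence bounded''. But convergent \emph{nets} in $\mathbb C$ need not be bounded---only a tail beyond some $\alpha_0$ is controlled, and the set $\{\alpha:\alpha\not\ge\alpha_0\}$ can be infinite and carry arbitrarily large values---so pointwise boundedness fails and Banach--Steinhaus does not apply. Worse, if your argument worked it would show that \emph{any} set $\{x_n\}$ with $\|x_n\|\to\infty$ is weakly closed, and this is false: in $\ell_2$ the set $\{\sqrt{n}\,e_n:n\ge 1\}$ has $0$ in its weak closure (given finitely many $f_1,\dots,f_k\in\ell_2$, one cannot have $\max_i|(f_i)_n|\ge\epsilon/\sqrt{n}$ for all $n$). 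The growth rate is essential, not a cosmetic afterthought.

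The paper supplies exactly this missing ingredient. It quotes the result (Bayart--Matheron) that $\{x_n:n\in\mathbb N\}$ is weakly closed whenever $\|x_n\|\ge kC^n$ for some $C>1$, and obtains such exponential growth from the openness of $R_P$: for some $t>1$ the point $x/t$ still lies in $R_P\subset A_P^c$, hence $\|P^n(x)\|=t^{m^n}\|P^n(x/t)\|\ge t^{m^n}r_P$. Your closing remark about growth like $(\|x\|/r_P)^{m^n}r_P$ points in the right direction but is itself unjustified (it would require $r_P\,x/\|x\|\notin A_P$, which need not hold), and in any case you invoke it only after the flawed boundedness step rather than as the engine of the proof. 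As for the $A_P$ case, your caveat is fair---if $0\notin Orb_P(x)$ the orbit is not literally weakly closed---and the paper's one-line treatment is equally casual here; what matters for the corollary is only that such an orbit is not weakly dense, which is immediate since it is norm-bounded.
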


For the proof we will need the following result, which may be found in \cite[Proposition 10.1]{BayMat09}. 
\begin{proposition}
If there are some constants $k>0$ and $C>1$ such that $\|x_n\|\geq k C^n$, then $\{x_n:n\in\zN\}$ is weakly closed.
\end{proposition}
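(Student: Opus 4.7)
The plan is to reduce the statement to the cited Banach-space fact by establishing an exponential lower bound $\|P^n(x)\|\ge k C^n$, which is what ``$x\in R_P$'' really gives once $m$-homogeneity is exploited. Since $x\notin J_P$ means $x\in A_P\cup R_P$, I would split into cases. The case $x\in A_P$ is of a different flavor (the orbit norm-converges to $0$, so if some iterate vanishes then $Orb_P(x)$ is finite and trivially $w$-closed, and otherwise the orbit together with its limit $\{0\}$ is norm-compact, hence weakly closed); the genuinely informative case, and the one motivating the next paragraph about orbits escaping to infinity, is $x\in R_P$.

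For $x\in R_P$, since $R_P$ is open I would pick $\delta>0$ with $B_\delta(x)\subset R_P$ and then $t\in(0,1)$ with $(1-t)\|x\|<\delta$, so that $tx\in R_P\subset A_P^c$. By the characterization $A_P^c=\{y:\|P^n(y)\|\ge r_P\ \forall n\}$ recorded right after the definition of $A_P$, we obtain $\|P^n(tx)\|\ge r_P$ for every $n\ge 0$. A short induction using $m$-homogeneity produces the identity $P^n(tx)=t^{m^n}P^n(x)$, and therefore
$$
\|P^n(x)\|\ =\ t^{-m^n}\,\|P^n(tx)\|\ \ge\ r_P\,t^{-m^n}\ \ge\ r_P\,(1/t)^n,
$$
where the last inequality uses $m^n\ge n$ for $m\ge 2$ and $0<t<1$. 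With $k:=r_P$ and $C:=1/t>1$, this is exactly the hypothesis of the proposition from \cite{BayMat09} cited below the statement, applied to $x_n:=P^n(x)$, and we conclude that $Orb_P(x)$ is weakly closed.

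The step I expect to be the main obstacle is upgrading the bare pointwise bound $\|P^n(x)\|\ge r_P$ (which is all one gets from $x\in A_P^c$ on its own) to a genuine exponential lower bound. The trick that unlocks the argument is the ``shrink-and-amplify'' move above: replace $x$ by the slight contraction $tx$, which still lies in the open set $R_P$ so that Proposition \ref{Ber}'s lower bound applies to it, and then let $m$-homogeneity with $m\ge 2$ convert the innocuous factor $t^{-m^n}$ into rapid (in fact doubly-exponential) growth of the original orbit. Without the openness of $R_P$ one could not shrink at all, and without the degree $m\ge 2$ one could not amplify — it is the interaction of the two that makes the orbit weakly closed.
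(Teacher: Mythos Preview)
You have proved the wrong statement. The proposition you were asked to prove is the general Banach-space fact: \emph{if $\|x_n\|\ge kC^n$ for some $k>0$, $C>1$, then $\{x_n:n\in\mathbb N\}$ is weakly closed}. This is the auxiliary result the paper quotes from \cite[Proposition 10.1]{BayMat09}; the paper does not prove it at all, it simply cites it. What you wrote is instead a proof of Proposition~\ref{weakly closed} (that $Orb_P(x)$ is weakly closed when $x\notin J_P$), which is the statement \emph{above} the one you were given and which \emph{uses} the cited proposition as a black box. Your own sentence ``this is exactly the hypothesis of the proposition from \cite{BayMat09} cited below the statement'' makes the confusion explicit: you are invoking the very statement you were meant to prove.

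For what it is worth, your argument for Proposition~\ref{weakly closed} is essentially the paper's own proof of that proposition: pick $0<t<1$ with $tx\in R_P$ (the paper writes it as $x/t$ with $t>1$), use $\|P^n(tx)\|\ge r_P$, and amplify via $m$-homogeneity to get $\|P^n(x)\|\ge r_P\,t^{-m^n}$, then apply the cited result. One small gap in your $A_P$ case: you conclude that $Orb_P(x)\cup\{0\}$ is norm-compact and hence weakly closed, but that shows the \emph{union} is weakly closed, not $Orb_P(x)$ itself; if no iterate vanishes then $0$ is a weak accumulation point not in the orbit. (The paper's one-line treatment of this case has the same issue.) None of this, however, addresses the actual proposition about arbitrary sequences $(x_n)$ with exponential norm growth; a proof of that requires a separate argument (e.g.\ a Hahn--Banach/separation argument exploiting that the complement of any finite subset of $\{x_n\}$ stays outside a ball of growing radius), and nothing in your write-up touches it.
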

\begin{proof}[Proof of Proposition \ref{weakly closed}]
If $x\in A_P$, $P^n(x)\to 0$ and therefore $\{P^n(x):n\in \zN\}$ is weakly closed.
If $x\in R_P,$ there is some $t>1$ such that $\frac{x}{t}\in R_P$. Then $\|P^n(x)\|=t^{m^n}\|P^n(\frac{x}{t})\|\geq t^{m^n}r_p$. By the above proposition $Orb_P(x)$ is weakly closed.
\end{proof}
\begin{corollary}
Let $x$ be a weakly hypercyclic vector for a homogeneous polynomial. Then $x\in J_P$.
\end{corollary}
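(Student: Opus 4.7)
The plan is to deduce the corollary from Proposition \ref{weakly closed} by a direct contradiction argument: if $x$ were weakly hypercyclic but not in $J_P$, then $Orb_P(x)$ would be simultaneously weakly dense and weakly closed, forcing $Orb_P(x)=X$, which is impossible since the orbit is countable.

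More precisely, I would argue as follows. Suppose, for the sake of contradiction, that $x$ is weakly hypercyclic for the homogeneous polynomial $P$ but $x\notin J_P$. Then $x\in A_P\cup R_P$, and Proposition \ref{weakly closed} applies to give that $Orb_P(x)$ is a weakly closed subset of $X$. On the other hand, by definition of weak hypercyclicity, $Orb_P(x)$ is weakly dense in $X$. A subset which is at the same time weakly closed and weakly dense must coincide with the whole ambient space, so $Orb_P(x)=X$.

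This last equality is the source of the contradiction. The orbit $Orb_P(x)=\{P^n(x):n\in\zN_0\}$ is countable, whereas $X$ is uncountable whenever there is any hope of having weak hypercyclicity at all: in the finite dimensional case the weak topology equals the norm topology, so weak hypercyclicity reduces to hypercyclicity, which is already ruled out for non linear homogeneous polynomials by the corollary to Proposition \ref{Ber}; and in the infinite dimensional separable case $X$ has cardinality at least $2^{\aleph_0}$. In either case we reach a contradiction, and so $x\in J_P$, as claimed.

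There is no real obstacle here; the corollary is essentially a one-line consequence of Proposition \ref{weakly closed} once one observes that countable sets cannot exhaust a Banach space. The only minor care needed is to note that the edge case of finite dimension is handled separately by Bernardes' theorem.
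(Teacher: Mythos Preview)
Your proof is correct and follows exactly the route the paper leaves implicit: the corollary is stated without proof immediately after Proposition \ref{weakly closed}, the intended argument being precisely that a weakly closed, weakly dense countable set cannot exhaust a Banach space. Your separate treatment of the finite-dimensional case is unnecessary, since any nonzero normed space over $\mathbb{C}$ is already uncountable.
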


It is known that homogeneous polynomials are not necessarily weak to weak continuous and therefore $(P,(X,\omega))$ is not a truly dynamical system.
Nevertheless the property of being weakly hypercyclic is preserved under quasiconjugacy provided that the factor $\Phi$ is weak to weak continuous and $\Phi$ has weakly dense range.
\begin{proposition}\label{weakly conj}
Let $X,Y$ be Fr\'echet spaces, $P\in \PP(^mX;X)$, $Q\in \PP(^m Y;Y)$ and $\Phi:X\to Y$ a weak to weak continuous map with weakly dense range such that the following diagram commutes.
 \[
\xymatrix{
	X \ar[d]^\Phi \ar[r]^P & X \ar[d]^\Phi \\
	Y \ar[r]^Q & Y
}
\]
If $P$ is weakly hypercyclic then so it is $Q$.
\end{proposition}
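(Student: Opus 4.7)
The plan is the standard transfer-of-hypercyclicity argument, but carried out carefully in the weak topology setting, since neither $P$ nor $Q$ is assumed to be weak-to-weak continuous. The strategy is to take a weakly hypercyclic vector $x\in X$ for $P$ and show that $\Phi(x)$ is weakly hypercyclic for $Q$.

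First, I would fix an arbitrary nonempty weakly open set $V\subseteq Y$ and aim to produce some $n\in\zN$ with $Q^n(\Phi(x))\in V$. The key observation is that, by the commuting diagram and an easy induction, $Q^n\circ\Phi=\Phi\circ P^n$ for every $n$, so it suffices to find $n$ with $\Phi(P^n(x))\in V$. In other words, it suffices to show that $P^n(x)\in\Phi^{-1}(V)$ for some $n$.

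Next, I would use the two hypotheses on $\Phi$: weak-to-weak continuity guarantees that $\Phi^{-1}(V)$ is weakly open in $X$, and weak density of the range guarantees that $V\cap\Phi(X)\neq\emptyset$, so $\Phi^{-1}(V)$ is nonempty. Hence $\Phi^{-1}(V)$ is a nonempty weak neighborhood in $X$, and by the weak hypercyclicity of $x$ for $P$ there exists $n$ with $P^n(x)\in\Phi^{-1}(V)$, which gives $Q^n(\Phi(x))=\Phi(P^n(x))\in V$, as wanted. Since $V$ was arbitrary, $\Phi(x)$ is weakly hypercyclic for $Q$.

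The whole argument is short and I do not see a genuine obstacle; the only subtle point worth spelling out is precisely why continuity of $P$ (which a priori is only with respect to the norm/Fréchet topology) is never invoked in the weak setting. The commutation $Q^n\Phi=\Phi P^n$ is a purely algebraic consequence of $Q\Phi=\Phi P$, and all the topological information we need about $\Phi$ is already in the hypothesis, which is exactly why weak-to-weak continuity of $\Phi$, together with weak density of its range, is the correct replacement for the linear-isomorphism assumption used in Lemma \ref{conjugar J_P}.
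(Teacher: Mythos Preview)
Your proof is correct and follows essentially the same route as the paper's own proof. The paper argues via closures, writing $Y=\overline{\Phi(Orb_P(x))}^\omega=\overline{Orb_Q(\Phi(x))}^\omega$, while you dualize and argue via nonempty weak open sets; these are equivalent formulations of the same standard transfer argument, and your version simply spells out the one-line proof in more detail.
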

\begin{proof}
	Let $x\in X$ be a weakly hypercyclic vector for $P$. Since $\overline{Orb_P(x)}^\omega=X$ and $\Phi$ is weak to weak continuous with weakly dense range, we have that $Y=\overline{\Phi(Orb_P(x))}^\omega=\overline{Orb_Q (\Phi(x))}^\omega$. 
\end{proof}
\subsection{Remarks and examples on $\Gamma$-supercyclicity}
Recall that the set of $\Gamma\sub \C$ such that a $\Gamma$-supercyclic operator is automatically a hypercyclic operator are exactly the sets $\Gamma$ such that $\Gamma$ is bounded and bounded away from zero. In the same spirit, if $\Gamma$ is bounded and bounded away from zero, then no homogeneous polynomial is $\Gamma$-supercyclic.
\begin{proposition}\label{Gamma vectors}
Let $P$ be a $\Gamma$-supercyclic homogeneous polynomial and $x$ a $\Gamma$-supercyclic vector. If $\Gamma$ is bounded then $x\notin A_P$ and if $\Gamma\setminus\{0\}$ is bounded away from zero then $x \in A_P$. In particular no homogeneous polynomial is $\Gamma$-supercyclic if $\Gamma$ is bounded and $\Gamma\setminus\{0\}$ is bounded away from zero.
\end{proposition}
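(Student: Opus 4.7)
The plan is to prove the two implications separately; the final claim then follows immediately, since for $\Gamma$ simultaneously bounded and with $\Gamma\setminus\{0\}$ bounded away from zero, any alleged $\Gamma$-supercyclic vector $x$ would have to satisfy both $x\notin A_P$ and $x\in A_P$. In each case the strategy is the same: show that $\Gamma\cdot Orb_P(x)$ is contained in a set that is manifestly not dense in $X$, contradicting $\Gamma$-supercyclicity. The key fact I will use throughout is the characterization $A_P^c=\{x:\|P^n(x)\|\ge r_P\text{ for all }n\ge 0\}$ and the fact that $r_P>0$ for any nonzero homogeneous polynomial of degree $\ge 2$.

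For the first implication, assume $|\gamma|\le M$ for every $\gamma\in\Gamma$ and suppose towards a contradiction that $x\in A_P$. Then $P^n(x)\to 0$, so the sequence $(\|P^n(x)\|)_n$ is bounded by some constant $K$. Consequently
\[
\|\gamma P^n(x)\|\le M\|P^n(x)\|\le MK
\]
for every $\gamma\in\Gamma$ and every $n\ge 0$, so $\Gamma\cdot Orb_P(x)$ is norm-bounded and therefore not dense in $X$.

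For the second implication, assume $|\gamma|\ge m>0$ for every $\gamma\in\Gamma\setminus\{0\}$ and suppose towards a contradiction that $x\notin A_P$. By Proposition \ref{Ber} (or rather the characterization recalled in the preliminaries), this forces $\|P^n(x)\|\ge r_P$ for every $n\ge 0$. Hence
\[
\|\gamma P^n(x)\|\ge m r_P>0
\]
for every $\gamma\in\Gamma\setminus\{0\}$ and every $n$, so $\Gamma\cdot Orb_P(x)\subseteq\{0\}\cup\{y\in X:\|y\|\ge m r_P\}$. This set misses the nonempty open annulus $\{y:0<\|y\|<m r_P\}$, contradicting density. The combined statement about $\Gamma$ bounded and $\Gamma\setminus\{0\}$ bounded away from zero then follows at once from the two incompatible conclusions.

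There is no real obstacle here; the argument is essentially a bookkeeping exercise once one has the invariant description of $A_P^c$ in hand. The only point to double-check is that $r_P$ is strictly positive (so that the excluded annulus in the second part is truly nonempty), which holds for any nontrivial $m$-homogeneous polynomial with $m\ge 2$.
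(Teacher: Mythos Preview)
Your proof is correct and follows essentially the same approach as the paper: in each case you assume the wrong alternative for $x$ and show that $\Gamma\cdot Orb_P(x)$ is either norm-bounded or bounded away from zero, hence not dense. The paper's argument is identical in structure, only slightly less detailed in naming the constants.
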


 \begin{proof}
    Suppose that $x\in A_P$ then $Orb_P(x)$ is bounded. Hence   $\Gamma\cdot Orb_P(x)$ is bounded if $\Gamma$ is bounded. Thus $x$ is not a $\Gamma$-supercyclic vector. 
  
  Suppose that $x\notin A_P$ and that $\Gamma\setminus\{0\}$ is bounded away from zero. By Proposition \ref{Ber}, $\|P^n(x)\|\geq r_P$ for every $n$. Therefore $\Gamma\cdot Orb_P(x)\setminus\{0\}$ is bounded away from zero and  again $x$ cannot be a $\Gamma$-supercyclic vector.
 \end{proof}
A $\Gamma$-supercyclic vector is not necessarily in $J_P$. Indeed, in Examples \ref{unbounded superciclico}, \ref{mainexample}, and \ref{D-superciclico} we show $\Gamma$-supercyclic vectors belonging to $A_P$, $J_P$ and $R_P$, respectively. 

If the factor $\Phi$ is linear, then $\Gamma$-supercyclicity is preserved under quasiconjugacy.
\begin{proposition}\label{conjugar Gamma}
Let $P,Q$ be homogeneous polynomials such that $Q$ is quasiconjugated to $P$ under a linear factor $\Phi$. If $P$ is $\Gamma$-supercyclic, then $Q$ is also $\Gamma$-supercyclic.
\end{proposition}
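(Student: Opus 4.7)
The plan is to take a $\Gamma$-supercyclic vector $x\in X$ for $P$ and show that its image $\Phi(x)\in Y$ is a $\Gamma$-supercyclic vector for $Q$. The key is to exploit the linearity of $\Phi$ so that scalar multiples pass through the conjugacy cleanly, together with the density of $\Phi(X)$ in $Y$.

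First I would note that because $\Phi$ is linear, for every $\gamma\in\Gamma$ and every $n\in\zN_0$ we have
\[
\Phi(\gamma P^n(x)) \;=\; \gamma\, \Phi(P^n(x)) \;=\; \gamma\, Q^n(\Phi(x)),
\]
where the second equality follows by iterating the commutative diagram $\Phi\circ P = Q\circ \Phi$. Consequently
\[
\Phi\bigl(\Gamma\cdot Orb_P(x)\bigr) \;=\; \Gamma\cdot Orb_Q(\Phi(x)).
\]

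Next I would use the topological hypotheses on $\Phi$. Since $x$ is a $\Gamma$-supercyclic vector for $P$, the set $\Gamma\cdot Orb_P(x)$ is dense in $X$. By continuity of $\Phi$, the image $\Phi(\Gamma\cdot Orb_P(x))$ is dense in $\Phi(X)$ (continuous maps send dense sets to sets dense in the image). By hypothesis $\Phi(X)$ is itself dense in $Y$, so $\Phi(\Gamma\cdot Orb_P(x))$ is dense in $Y$. Combining with the identity above, $\Gamma\cdot Orb_Q(\Phi(x))$ is dense in $Y$, proving that $\Phi(x)$ is a $\Gamma$-supercyclic vector for $Q$.

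There is no real obstacle here; the only subtle point is that the argument depends essentially on $\Phi$ being \emph{linear}, since otherwise one cannot commute $\Phi$ with the scalar multiplication by $\gamma$, and the analogous statement may fail for merely continuous factors. This mirrors the situation for $J_P$ in Lemma \ref{conjugar J_P}, where linearity of the factor was also crucial.
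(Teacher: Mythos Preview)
Your proof is correct and follows essentially the same route as the paper: both arguments use the identity $\Phi(\Gamma\cdot Orb_P(x))=\Gamma\cdot Orb_Q(\Phi(x))$, which relies on the linearity of $\Phi$, together with continuity and dense range of $\Phi$ to conclude. Your write-up is a slightly more detailed version of the paper's one-line argument.
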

\begin{proof}
If $\Gamma\cdot Orb_P(x)$ is dense then $\Gamma\cdot Orb_Q(\Phi(x))=\Gamma\cdot \Phi(Orb_P(x))=\Phi(\Gamma\cdot Orb_P(x))$ is also dense because $\Phi$ has dense range.
\end{proof}

As in the linear case we can define the $\Gamma$-transitivity notion. For some examples it will be an useful  tool to prove $\Gamma$-supercyclity. 

\begin{definition}
	Let $X$ be a separable Fr\'echet space. We say that a polynomial $P\in \PP(^m X;X)$ is $\Gamma$-transitive, if for every open sets $U$ and $V$, there exist $n\in \zN$ and $\lambda\in\Gamma$ with
	$$\lambda P^n(U)\cap V\neq\emptyset.$$ 
\end{definition}
It is easy to prove that $\Gamma$-transitivity implies $\Gamma$-supercyclity. Moreover, in this case, the set of  $\Gamma$-supercyclic vectors is residual. However the converse is false. Indeed, if $X$ is Banach and $\Gamma$ is bounded then no homogeneous polynomial is $\Gamma$-transitive.

\begin{proposition}\label{Gamma transitividad}
Let $X$ be a separable Fr\'echet space and $\Gamma\sub X$. If $P\in\PP(^m X;X)$ is $\Gamma$-transitive then $P$ is $\Gamma$-supercyclic. 	
\end{proposition}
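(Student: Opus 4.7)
The plan is to follow the classical Birkhoff transitivity argument, adapted to the $\Gamma$-supercyclic setting, and obtain a $\Gamma$-supercyclic vector via a Baire category argument. Since $X$ is separable and metrizable, fix a countable basis $(V_k)_{k\in\zN}$ of non-empty open sets for the topology of $X$.

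For each $k\in\zN$, I would define
\[
D_k := \bigcup_{n\in\zN,\ \lambda\in\Gamma} (\lambda P^n)^{-1}(V_k) = \{x\in X : \exists\, n\in\zN,\ \exists\,\lambda\in\Gamma,\ \lambda P^n(x)\in V_k\}.
\]
For fixed $n$ and $\lambda$, the map $x\mapsto \lambda P^n(x)$ is continuous, so $(\lambda P^n)^{-1}(V_k)$ is open, and hence $D_k$ is open as a union of open sets (regardless of whether $\Gamma$ is countable).

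Next I would show that each $D_k$ is dense in $X$. Given any non-empty open set $U$, the $\Gamma$-transitivity hypothesis applied to $U$ and $V_k$ yields $n\in\zN$ and $\lambda\in\Gamma$ such that $\lambda P^n(U)\cap V_k\neq\emptyset$. Picking $x\in U$ with $\lambda P^n(x)\in V_k$ shows $U\cap D_k\neq\emptyset$, so $D_k$ is dense.

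Since $X$ is a Fr\'echet space (hence a Baire space), the intersection $\bigcap_{k\in\zN} D_k$ is a dense $G_\delta$, in particular non-empty. Any $x\in \bigcap_k D_k$ satisfies: for every $k$, there exist $n_k\in\zN$ and $\lambda_k\in\Gamma$ with $\lambda_k P^{n_k}(x)\in V_k$. Thus $\Gamma\cdot Orb_P(x)$ meets every element of a countable basis and is therefore dense in $X$, so $x$ is a $\Gamma$-supercyclic vector. As a bonus, this shows that the set of $\Gamma$-supercyclic vectors is residual (a fact the paper mentions right before the statement). The only mildly delicate point is verifying that $D_k$ is open when $\Gamma$ is uncountable, but this is handled automatically since an arbitrary union of open sets is open; there is no real obstacle here.
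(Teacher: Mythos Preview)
Your proof is correct and follows essentially the same Baire/Birkhoff argument as the paper: fix a countable basis $(V_k)$, show that the sets of vectors whose $\Gamma$-orbit meets $V_k$ are open and dense, and intersect. The only cosmetic difference is that the paper exploits $m^n$-homogeneity of $P^n$ to rewrite $(\lambda P^n)^{-1}(V_k)$ as $\lambda^{1/m^n}P^{-n}(V_k)$; your direct formulation via $(\lambda P^n)^{-1}(V_k)$ is equally valid and arguably cleaner.
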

\begin{proof}
	Let $\{V_i\}_{i\in\zN}$ be a basis for $X$. For each $\gamma\in \Gamma$ and each $k$ choose $\gamma^\frac{1}{m^k}$ to be a $m^k$-root of $\gamma$. Consider for all $i$, 
	$$G_i:=\bigcup_{k,\gamma\in\Gamma} \gamma^\frac{1}{m^k} P^{-k} (V_i).$$
	Since $P$ is $\Gamma$-transitive, $G_i$ is a dense open set for each $i$. Therefore, $G=\bigcap_{i\in\zN} G_i$ is non empty. Finally we notice that any point in $G$ is a  $\Gamma$-supercyclic vector.
\end{proof}

We propose the following criterion to study $\Gamma$-transitivity. 

\begin{definition}[$\Gamma$-transitivity criterion]\label{criterion}
	We say that $P$ satisfies the supercyclity criterion for $(\lambda_k)$ provided that there are dense sets $X_0$ and $Y_0$ and applications $S_k(x):Y_0\rightarrow X$ such that
	for all $x\in X_0$, $y\in Y_0$,
	
	i) $S_{k}(x)(y)\rightarrow 0$ and
	
	ii) $\lambda_k P^{n_k}(x+S_k(x)(y))\rightarrow y$.
\end{definition}
\begin{proposition}\label{prop criterio super}
	Let $(X,(\rho_n)_n)$ be a separable Fr\'echet space, where $(\rho_n)_n$ is a fundamental system of nondecreasing seminorms,  and let $P$ be a homogeneous polynomial. Then $P$ satisfies the $\Gamma$-transitivity criterion with to respect some $(\lambda_k)\sub \Gamma$ if and only if $P$ is $\Gamma$-transitive.
\end{proposition}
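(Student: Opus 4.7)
The plan is to establish the equivalence by unraveling definitions in the standard spirit of Birkhoff-style universality criteria, with the nontrivial direction handled by a diagonal construction.

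For the easy implication---that the criterion entails $\Gamma$-transitivity---let $U,V$ be arbitrary nonempty open subsets of $X$. By the density of $X_0$ and $Y_0$, I would select $x\in X_0\cap U$ and $y\in Y_0\cap V$. Condition (i) in Definition \ref{criterion} gives $x+S_k(x)(y)\to x$, so $x+S_k(x)(y)\in U$ eventually; condition (ii) gives $\lambda_k P^{n_k}(x+S_k(x)(y))\to y$, so the image lies in $V$ eventually. Hence for all sufficiently large $k$ one has $\lambda_k P^{n_k}(U)\cap V\neq\emptyset$, and since $\lambda_k\in\Gamma$ this is exactly $\Gamma$-transitivity.

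For the converse, I would fix a countable dense sequence $\{z_i\}_{i\in\zN}$ in $X$, take $X_0=Y_0=\{z_i\}$, and enumerate $\zN^3$ as $(i_k,j_k,\ell_k)_{k\in\zN}$ so that each triple is hit infinitely often. For each $k$, I would apply $\Gamma$-transitivity to the $\rho_{\ell_k}$-ball of radius $1/\ell_k$ around $z_{i_k}$ and the corresponding ball around $z_{j_k}$, obtaining $n_k\in\zN$, $\lambda_k\in\Gamma$, and $w_k\in X$ with
\begin{equation*}
\rho_{\ell_k}(w_k-z_{i_k})<\tfrac{1}{\ell_k}\qquad\text{and}\qquad \rho_{\ell_k}\bigl(\lambda_k P^{n_k}(w_k)-z_{j_k}\bigr)<\tfrac{1}{\ell_k}.
\end{equation*}
Then I would set $S_k(z_i)(z_j)=w_k-z_{i_k}$ on the subsequence of indices $k$ for which $(i_k,j_k)=(i,j)$, and extend to the remaining indices in any way compatible with the convergence of the pair being served.

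The main obstacle I anticipate is ensuring that (i) and (ii) hold simultaneously along the single pre-chosen sequence $(\lambda_k,n_k)$ for every pair in $X_0\times Y_0$. The saving point is that for each fixed pair $(z_i,z_j)$ the enumeration visits $(i,j,\ell)$ with $\ell\to\infty$; along that subsequence the bound $1/\ell_k\to 0$ together with the monotonicity of the seminorms $(\rho_n)_n$ forces $\rho_m(S_k(z_i)(z_j))\to 0$ and $\rho_m(\lambda_k P^{n_k}(z_i+S_k(z_i)(z_j))-z_j)\to 0$ for every fixed $m$. Interpreting (i) and (ii) along the subsequence that actually serves each pair, as is standard in universality criteria in the style of Bes--Peris, then closes the argument and yields the equivalence.
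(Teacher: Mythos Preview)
Your forward direction is correct and is exactly the paper's argument.

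For the converse, the paper takes a much more direct route: it fixes a pair $x,y\in X_0$ and, for each $k$, applies $\Gamma$-transitivity to the balls $\{w:\rho_k(x-w)<1/k\}$ and $\{w:\rho_k(y-w)<1/k\}$, obtaining $z_k$, $\lambda_k\in\Gamma$ and $n_k>n_{k-1}$ with $\rho_k(\lambda_kP^{n_k}(z_k)-y)<1/k$; then $S_k(x)(y):=z_k-x$. The sequences $(\lambda_k),(n_k)$ are thus built pairwise, and the criterion is read accordingly. No diagonalization, no enumeration of $\zN^3$.

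Your diagonal construction aims instead for a single universal $(\lambda_k,n_k)$, but the step ``extend to the remaining indices in any way compatible with the convergence'' does not work for condition (ii): when $k$ is not in the subsequence serving $(i,j)$, the numbers $\lambda_k$ and $n_k$ are already committed to a different pair, and there is in general no choice of $S_k(z_i)(z_j)$ making $\lambda_kP^{n_k}(z_i+S_k(z_i)(z_j))$ land near $z_j$. You rescue this in the last paragraph by ``interpreting (i) and (ii) along the subsequence that actually serves each pair,'' which is precisely the pairwise reading the paper uses from the start. So the argument is ultimately sound under that interpretation, but the enumeration of $\zN^3$ and the extension step are superfluous scaffolding; the paper's version gets there in three lines.
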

\begin{proof}
	Suppose that $P$ satisfies the $\Gamma$-transitivity	 criterion with respect to $(\lambda_k)$. Let $U,V$ be open sets. Let $x\in U\cap X_0$ and $y\in V\cap Y_0$. Since $S_k(x)(y)\to 0$  there is some with $x+S_k(y)\in U$ for large $k$. Since $\lambda_k P^{n_k}(x+S_k(x)(y))\to y$ there is some $k$ such that  $P^{n_k}(x+S_k(x)(y))\in V$.
	
	Suppose now that $P$ is $\Gamma$-transitive. Let $(x_i)_{i\in\zN}$ be a dense numerable sequence in $X$. Let $X_0=Y_0=\{x_i:i\in\zN\}$. 
	Fix $x,y\in X_0$. 
	For each $k\in\mathbb N$ there is some $z_k\in \{w:\,\rho_k(x-w)<\f{1}{k}\}$ such that for some $n_k>n_{k-1}$ and $\lambda_k\in \Gamma$ we have $\lambda_kP^{n_k}(z_k)\in \{w:\,\rho_k(y-w)<\f{1}{k}\}$ . We define $S_k(x)(y)=z_k-x$. 
	Since $z_k\to x$, $S_k(x)(y)\to 0$ while $\lambda_kP^{n_k}(x+S_k(x)(y))\rightarrow y$.
\end{proof}
\begin{example}\label{unbounded superciclico}
 Let $X$ be $c_0$ and $P:c_0\rightarrow c_0$ as $P(a)_j:=a_{j+1}^m$. The polynomial $P$ is $\Gamma$-supercyclic for every unbounded $\Gamma\sub\C$.
\end{example}
\begin{proof}
We will apply Criterion \ref{criterion}.
Let $X_0=Y_0=c_{00}$ and let $x,y\in c_{00}$. 
   We now define $S_k(x)(y)$. For $\lambda\in\C$ let $\lambda^\frac{1}{m}$ be any $m$-root of $\lambda$. Let  $F:c_{00}\rightarrow c_{00}$ be defined  as
$$F(a)_j:= \left\{ \begin{array}{cc}
                     a_{j-1}^\frac{1}{m}&\text{ if } j\neq 1\\
                     0                      & \text{ if } j=1      ,                
                         \end{array}\right.  $$
 and set $S_k(x)(y)=\frac{F^k(y)}{\lambda_{k}^\frac{1}{m^k}}$, where $\lambda_{k}\in\Gamma$ is chosen so that  $\lambda_{k}^\frac{1}{m^k}$ tends to infinity. 
 
 Since $\|F(y)\|\leq \max\{1,\|y\|\}$ we get $S_k(x)(y)\to 0$. On the other hand, if $k>  \max \{j:x_j\neq 0\}$ it follows that 
 $$\lambda_{k}P^k(x+S_k(x)(y))=\lambda_{k}P^kS_k(x)(y)=P^kF^k(y)=y.$$
 By Proposition \ref{prop criterio super}, $P$ is $\Gamma$-transitive and hence $\Gamma$-supercyclic.
 \end{proof}
 The above  polynomial cannot provide an example of a $\Gamma$-supercyclic homogeneous polynomial when $\Gamma$ is a bounded set. Indeed, if $\Gamma$ is bounded, then by Proposition \ref{Gamma vectors} the $\Gamma$-supercyclic vector is not in $A_P$, but in this case $A_P=c_{0}$. The same happens if we consider this polynomial on $\ell_p$, $1\le p<\infty$. However, in the space $c$ of convergent sequences the situation is different.
    \begin{example}\label{D-superciclico}
       Let $X=c$ and $P:c\rightarrow c$ defined as 
        $P(a)_j=(a_{j+1})^m.$ Then $P$ is $\zD$-supercyclic.
   \end{example}
 
   \begin{proof}
   For $A\sub \zN$ we denote by $\mathbb 1_A$ the characteristic vector of $A$, i.e.,   
   $$\mathbb 1_A(j)=\left\{\begin{array}{cr}
                                         1 & \text{ if } j\in A;\\
                                         0 & \text{ else }.
                                       \end{array}\right.$$

%
  For each $\lambda=re^{i\theta}\in\C$ let $\lambda^{\frac{1}{n}}$ denote the number $r^{\frac{1}{n}}e^{i\theta/n}$.

 We define $F:c\rightarrow c$ as 
 $$F(a)_j:=\left\{ \begin{array}{cr}
 a_{j-1}^\frac{1}{m}& \text{ if } j>1;\\
 0                  & \text{ if } j=1.
 \end{array}\right.$$                                    
 Clearly $F$ is a well defined map and $P^kF^k=Id,$ for every $k\in\zN$.
 
  Let $ (\tilde y_n)_n$ be a dense sequence in $c$ such that $ (\tilde y_n)_n=(y_n)_n + (l_n\mathbb 1_{sop(y_n)^c})_n$, where $(y_n)_n$ is a dense sequence in $c_{00}$, and $(l_n)_n$ is a sequence in $\C$.  Without loss of generality, we may suppose that, for all $n$,  $l_n\neq 0$ and $supp(y_n)=[1,m(y_n)]$, where  $m(y_n):=\max\{j:[y_n]_j\neq 0\}$. 
  
  We can construct, by induction, a sequence $(n_k)_k\sub\zN$ satisfying the following properties:
  
  i) $n_k>m(y_{k-1})+ n_{k-1}$, 
  
  ii) for all $j\in n_k + supp(y_k)$, $\left|\frac{[F^{n_k}(y_k)]_j}{l_k^\frac{1}{m^{n_k}}}-1\right|<\frac{1}{k}$, 
   
  iii) for every $k>0$, $\frac{l_k}{2^{m^{n_k}}}< 1$ and 
   
  iv)  for all $j<k$, and all $i\in (n_k-n_j)+supp(y_k)$, $\left|l_j\left(\frac{[F^{n_k-n_j}(y_k)]_i}{l_k^\frac{m^{n_j}}{m^{n_k}}}-1\right)\right|<\frac{1}{k}.$
  
 Note that it is possible to choose such a sequence $(n_k)_k$ because all four conditions will be satisfied once $n_k$ is sufficiently large.
 
 Let $\tilde x\in  \ell_\infty$ be the vector defined as
 $$
 \tilde x=\sum_{j=1}^\infty  \frac {2 F^{n_j}(y_j)}{l_j^{\frac{1}{m^{n_j}}}}.
 $$
The vector $\tilde x \notin c$ since there are gaps with zeros. By condition ii) the nonzero coordinates of $\tilde x$ tend to $2$. Therefore we fill the gaps to obtain a well defined vector in $c$, we define
 $$
 x=\tilde x + 2\mathbb 1_{supp (\tilde x)^c}.
 $$
     By condition i), the supports of the $F^{n_j}(y_j)$ are pairwise disjoint. Consequently, 
  \begin{equation}\label{P(x)}
   P^k(\tilde x)=\sum_{j=1}^\infty P^k\left( \frac {2F^{n_j}(y_j)}{l_j^{\frac{1}{m^{n_j}}}}\right).
  \end{equation}
 Also, by condition i), it follows that if $k>j$, then
 \begin{equation}\label{PF=0}
  P^{n_k}(F^{n_j}(y_j))=0.
 \end{equation}
  We claim that $x$ is a $\zD$-supercyclic vector. Indeed, let $k$ be in $\zN$. By condition iii) each $\frac{l_k}{2^{m^{n_k}}}\in\zD$ and thus,
  \begin{eqnarray}\label{eqDsuper}
   \nonumber \left\|\frac{l_k}{2^{m^{n_k}}}P^{n_k}(x)-\tilde y_k\right\|&=&\left\|\frac{l_k}{2^{m^{n_k}}}P^{n_k}\left(\tilde x+2\mathbb 1_{supp(\tilde x)^c}\right) - y_k-l_k\mathbb 1_{supp(y_k)^c}\right\|\\
    &=&\left\|\frac{l_k}{2^{m^{n_k}}}P^{n_k}(\tilde x)+\frac{l_k}{2^{m^{n_k}}}2^{m^{n_k}}P^{n_k}\left(\mathbb 1_{supp(\tilde x)^c}\right) - y_k-l_k\mathbb 1_{supp(y_k)^c}\right\|.
  \end{eqnarray}
  Let us observe that $\zN=supp(P^{n_k}(\tilde x +\mathbb 1_{supp(\tilde x)^c}))=supp(P^{n_k}(\tilde x))\cup supp(P^{n_k}(\mathbb 1_{supp(\tilde x)^c})).$
  and that $P^{n_k}\left(\mathbb 1_{supp(\tilde x)^c}\right)=\mathbb 1_{supp(P^{n_k}(\tilde x))^c}$. Moreover, since $supp(y_k)\sub supp(P^{n_k}(\tilde x))$, we have also that
 $$\mathbb 1_{supp(P^{n_k}(\tilde x)^c)}-\mathbb 1_{supp(y_k)^c}=-\mathbb 1_{(supp(P^{n_k}(\tilde x))\setminus supp(y_k))}.$$
 Therefore, by equality \eqref{P(x)}, the above remarks and equation \eqref{eqDsuper},
 \begin{eqnarray}\label{eqDsuper2}
 \nonumber\left\|\frac{l_k}{2^{m^{n_k}}}P^{n_k}(x)-\tilde y_k\right\|&=&\left\|\frac{l_k}{2^{m^{n_k}}}P^{n_k}(\tilde x)+l_k\mathbb 1_{supp(P^{n_k}\tilde x)^c} - y_k-l_k\mathbb 1_{supp(y_k)^c}\right\|\\
 \nonumber&=&\left\|\frac{l_k}{2^{m^{n_k}}}P^{n_k}(\tilde x)-y_k-l_k\mathbb 1_{(supp(P^{n_k}(\tilde x))\setminus supp(y_k))}\right\|\\
 \nonumber&=&\left\|\frac{l_k}{2^{m^{n_k}}}P^{n_k}\left(\sum_{j=1}^\infty  \frac {2F^{n_j}(y_j)}{l_j^{\frac{1}{m^{n_j}}}}\right)-y_k-l_k\mathbb 1_{(supp(P^{n_k}(\tilde x))\setminus supp(y_k))}\right\|\\
 \nonumber&=&\left\|\sum_{j=1}^\infty \frac{l_k}{2^{m^{n_k}}}P^{n_k}\left(  \frac {2F^{n_j}(y_j)}{l_j^{\frac{1}{m^{n_j}}}}\right)-y_k-l_k\mathbb 1_{(supp(P^{n_k}(\tilde x))\setminus supp(y_k))}\right\|\\
 \nonumber&=& \left\|\sum_{j=1}^\infty\frac{l_k}{l_j^{\frac{m^{n_k}}{m^{n_j}}}}P^{n_k}\left(F^{n_j}(y_j)\right)-y_k-l_k\mathbb 1_{(supp(P^{n_k}(\tilde x))\setminus supp(y_k))}\right\|\\
 \nonumber&\leq&\left\|\sum_{j<k}\frac{l_k}{l_j^\frac{m^{n_k}}{m^{n_j}}}P^{n_k}F^{n_j}(y_j)\right\|+\left\|\frac{l_k}{l_k^\frac{m^{n_k}}{m^{n_k}}}y_k-y_k\right\|\\
 \nonumber&+&\left\|\sum_{j>k}\frac{l_k}{l_j^\frac{m^{n_k}}{m^{n_j}}}\left(F^{n_j-n_k}(y_j)\right)-l_k\mathbb 1_{(supp(P^{n_k}(\tilde x))\setminus supp(y_k))}\right\|.
 \end{eqnarray}
 By equality \eqref{PF=0} the first term of the last expression is zero. Notice also that $supp\left(\sum_{j>k}F^{n_j-n_k}(y_j)\right)=supp(P^{n_k}(\tilde x))-supp(y_k)$. Therefore, by property iv),
 \begin{align*}
  \left\|\sum_{j>k}\frac{l_k}{l_j^\frac{m^{n_k}}{m^{n_j}}}\left(F^{n_j-n_k}(y_j)\right)-l_k\mathbb 1_{(supp(P^{n_k}(\tilde x))\setminus supp(y_k))}\right\|
  &=\sup_{j>k}\sup_{i\in n_j-n_k+supp(y_j)}
  \left\{\left|l_k \left(\frac{[F^{n_j-n_k}(y_j)]_i}{l_j^\frac{m^{n_k}}{m^{n_j}}}-1\right)\right|\right\}\\
  &\leq \sup_{j>k} \frac{1}{j}=\frac{1}{k+1}\to 0.
 \end{align*}
  This implies that $x$ is a $\zD$-supercyclic vector.
 \end{proof}
 In Example \ref{J_P de c} we proved that the Julia set of the above polynomial is $J_P=\{x\in c: \lim x_j\in \zT\}$. Since for each $\lambda\in\zD$ and each $x\in J_P$, $\lim \lambda x_j\in \zD$, it follows that if $x$ is a $\zD$-supercyclic vector then $x\in R_P$. 
 \subsection{Main example.}
We proceed with the main example of our work. We prove in this subsection that the 2-homogeneous polynomial  $P:\ell_p\to \ell_p$ defined as $P=e_1'\cdot B,$ where $B$ is the backward shift operator,  is weakly hypercyclic, $d$-hypercyclic for every $d>r_P$ and $\Gamma$-supercyclic for every $\Gamma\subset \mathbb C$ such that $\Gamma$ is  unbounded or  not bounded away from zero. The proof is based on properties of its Julia set $J_P$.

\begin{theorem}\label{mainexample}
 Let $X=c_0$ or $X=\ell_p$, $1\le p<\infty$, and let $P:X\rightarrow X$ defined as
 $$P(x)=x_1 B(x),$$
 where $B: X\rightarrow X$ is the backward shift.
 Then $P|_{J_P}$ is chaotic and, if $x_0\in J_P$ is any vector such that $Orb_P(x_0)$ is dense in $J_P$ then 
 \begin{enumerate}
 	\item $x_0$ is a weakly hypercyclic vector, 
 	\item $x_0$ is a $d$-hypercyclic vector, for every $d>r_P$, 
 	\item $x_0$ is a $\Gamma$-supercyclic vector, for every $\Gamma\subset \mathbb C$ such that 0 is an accumulation point of $\Gamma$.
 	 \end{enumerate}
 Moreover,  $P$ is also $\Gamma$-supercyclic for every unbounded set $\Gamma\subset \mathbb C$.
 \end{theorem}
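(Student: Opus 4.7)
My plan has four stages: (a) extract the basic algebraic structure of $P$, (b) use it to prove $P|_{J_P}$ is chaotic, (c) deduce conclusions (1)--(3) for any orbit dense in $J_P$, and (d) handle unbounded $\Gamma$ separately. A direct induction yields the iteration formula
\[
P^n(x)_j \,=\, \Lambda_n(x)\, x_{n+j}, \qquad \Lambda_n(x) := \prod_{k=1}^n x_k^{2^{n-k}},
\]
so $\|P^n(x)\| = |\Lambda_n(x)|\,\|\sigma^n(x)\|$, with $\sigma$ the backward shift. This exhibits a family of continuous right inverses $S_c(y) := c\, e_1 + c^{-1}(0,y_1,y_2,\ldots)$ satisfying $P\circ S_c = \mathrm{Id}$, and they automatically map $J_P$ into itself (the orbit of $S_c(v)$ from step one onward is the orbit of $v$, hence bounded away from $0$ and $\infty$). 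The formula also exposes a rich family of periodic points of period $k$: prescribe $(x_1,\ldots,x_k)$ with $|\Lambda_k(x)|>1$ and extend periodically by $x_{k+j} = x_j/\Lambda_k(x)$; the result decays geometrically and lies in $J_P$.

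For chaos on $J_P$, the key observation is that along any $u \in J_P$ one has $|\Lambda_k(u)|\to\infty$, since $|\Lambda_k(u)|\,\|\sigma^k(u)\|\ge r_P$ and $\sigma^k(u)\to 0$. Density of periodic points then follows by truncating $u$ past a large index $k$ and extending periodically. For transitivity of $P|_{J_P}$ I would use the identity
\[
S_{c_1}\circ\cdots\circ S_{c_k}(v) \;=\; (u_1,\ldots,u_k,\;\Lambda_k(u)^{-1}v_1,\;\Lambda_k(u)^{-1}v_2,\ldots),
\]
valid when the $c_j$ are chosen (inductively, by $c_j = \Lambda_{j-1}(u)\,u_j$) so that the first $k$ coordinates of the composition equal those of a prescribed $u \in J_P$. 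The blow-up of $|\Lambda_k(u)|$ makes the tail vanish in norm, so the composition approaches $u$ while remaining in $J_P$, and $P^k$ applied to it recovers the prescribed $v \in J_P$. Birkhoff's theorem then yields a residual set of $x_0 \in J_P$ with $\overline{Orb_P(x_0)}=J_P$.

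Now fix such an $x_0$; the three conclusions reduce to density properties of $J_P$ in $X$. For (1), weak density of $J_P$: given $y \in X$ and $f_1,\ldots,f_m \in X^\ast$, the tail decay of the $f_i$ in $\ell_{p'}$ (or $\ell_1$ in the $c_0$ case) reduces matters to matching finitely many coordinates, and the inverse-branch formula above, applied to some $v \in J_P$ of controlled norm with $|\Lambda_k|$ large, produces $z \in J_P$ weakly close to $y$. For (2), $d$-density with $d > r_P$: perturb $y \in X$ only in the tail beyond a large index $N$ by the tail of a periodic $J_P$-vector, of norm at most $r_P + \epsilon$, chosen so that the combined products $\Lambda_n(z)$ keep the orbit norm above $r_P$. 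For (3), with $\lambda_n \in \Gamma\setminus\{0\}$ and $\lambda_n \to 0$: since $J_P$ contains periodic points of arbitrarily large norm, every $y/\lambda_n$ is approximable by an orbit point of $x_0$, giving $\lambda_n P^{k_n}(x_0)\to y$. Finally, for $\Gamma$ unbounded I would pick $\lambda \in \Gamma$ with $|\lambda|$ large so $y/\lambda \in r_P B_X \sub A_P$; since $x_0 \in J_P$ is unusable against points of $A_P$, I would construct an independent $\Gamma$-supercyclic vector by a Baire category argument, showing that for each ball $B$ and each $\lambda$ in a countable subset of $\Gamma$ the set of initial points whose orbit meets $\lambda^{-1}B$ is open and dense in $X$, again via the right inverses $S_c$, and intersecting. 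The main technical obstacle will be in step (c): verifying that the modified vectors used for weak density and $d$-density genuinely remain in $J_P$ requires preserving the recursive balance $|\Lambda_n|\,\|\sigma^n(\cdot)\| \ge r_P$ through the coordinate modifications, which is delicate when $y$ is small or has $y_1 = 0$.
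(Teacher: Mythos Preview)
Your overall architecture matches the paper's: prove $P|_{J_P}$ is chaotic, then show $J_P$ itself is weakly dense, $d$-dense for $d>r_P$, and satisfies $\overline{\Gamma\cdot J_P}=X$ when $0\in\overline\Gamma$; a dense orbit in $J_P$ then inherits all three. Your arguments for chaos (via $|\Lambda_k(u)|\to\infty$ along $J_P$, the right inverses $S_c$, and the periodic extension) and for unbounded $\Gamma$ (a Baire/transitivity argument through the right inverses) are essentially those of the paper, just packaged differently.

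The gap is in step~(c), and it is not merely ``delicate'' but fatal to the approach as written. For $d$-density you propose to modify $y$ only beyond a large index $N$. But if $y_1=0$ then any $z$ with $z_1=y_1=0$ satisfies $P(z)=0$, hence $z\in A_P$; a tail-only modification can never land in $J_P$. The same obstruction kills your weak-density plan: the inverse-branch vector $(u_1,\dots,u_k,\Lambda_k(u)^{-1}v)$ requires $|\Lambda_k(u)|$ large, but if you insist $u_j\approx y_j$ and some early $y_j$ vanishes (or is small), $|\Lambda_k(u)|$ is forced small. Your argument for (3) (``$J_P$ contains periodic points of arbitrarily large norm'') does not by itself show $y/\lambda_n$ is close to $J_P$; that would follow from $d$-density, which you have not established.

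The idea you are missing, and which the paper supplies, is an explicit fixed point
\[
\overline x \;=\; 2^{1/p}\bigl(1,\,2^{-1/p},\,2^{-2/p},\,\ldots\bigr),\qquad \|\overline x\|_p=r_P,
\]
together with the \emph{coordinatewise} comparison principle: if $|z_j|\ge|\overline x_j|$ for every $j$, then $\|P^n(z)\|\ge\|P^n(\overline x)\|=r_P$ for all $n$, so $z\in J_P$ (recall $R_P=\emptyset$). For $d$-density the paper sets $\tilde y_j:=\mathrm{sign}(y_j)\max\{|y_j|,|\overline x_j|\}$, modifying \emph{all} coordinates; this gives $\tilde y\in J_P$ and $\|\tilde y-y\|_p\le\|\overline x\|_p=r_P$ in one stroke. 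For weak density the paper combines this with a second trick you did not use: pick $0\ne w\in\bigcap_i\ker\varphi_i$ and replace $x_0$ by $y^n:=nw+x_0$ (invisible to the $\varphi_i$); one coordinate of $y^n$ blows up with $n$, and then the coordinatewise max with $\tfrac{\epsilon}{r_P}\overline x$ is a norm-small perturbation that lands in $J_P$. The same comparison with $\overline x$ gives $\overline{\Gamma\cdot J_P}=X$ immediately when $0\in\overline\Gamma$. Once you have this device, your step~(c) becomes short; without it, the approach stalls exactly where you flagged.
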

 We will prove the theorem for $X=\ell_p$, the proof for $X=c_0$ is similar.
 We will divide the proof in several steps. We will prove first that $P|_{J_P}$  is chaotic and then that the Julia set $J_P$ is $d$-dense, weakly dense and that $\overline{\Gamma\cdot J_P}=\ell_p$ for every $d>r_P$ and every $\Gamma$ with zero as an accumulation point. A dense orbit in $J_P$ inherits all the mentioned properties.
 
  Since $\Gamma$-supercyclic vectors must belong to $A_P$ if $\Gamma$ is bounded away from zero,  the case when $\Gamma$ is unbounded (but not necessarily with 0 as accumulation point) will be treated separately.
 
 We start by computing the norm of $P$. The problem is equivalent to maximizing the function $f(x,y)=xy$ under the restrictions $x^p+y^p=1, x\geq 0, y\geq 0$. Applying Lagrange multipliers it follows that $\|P\|=\frac{1}{2^{\frac{2}{p}}}$. Thus $r_P=2^{\frac{2}{p}}$.

 Notice that $P^n(x)=c_n(x)B^n(x)$, where
 $$c_n(x)=x_1^{2^{n-1}}\cdot x_2^{2^{n-2}}\cdot\ldots \cdot x_n;$$
  equivalently $c_n(x)$ can be  recurrently defined by the relations 
  $$\begin{cases}
  c_1(x)=x_1\\
  c_{n+1}(x)=c_{n}^2(x) x_{n+1}.
  \end{cases}$$  
   
 Since $c_{00}\sub A_P$, $A_P$ is dense and $R_P=\emptyset$. By Proposition \ref{completely invariant} $J_P$ must be completely invariant. If $x\in J_P$ and $|t|>1$ then $tx\notin A_P$. Since $R_P=\emptyset$, $tx$ must belong to $J_P$. Thus, we may think $J_P$ as an infinite union of half lines.

Let us first check that $J_P\neq \emptyset$. The following is a fixed vector for $P$
$$\overline x=2^\frac{1}{p}\l1,\frac{1}{2 ^\frac{1}{p}},\frac{1}{2 ^\frac{2}{p}},\frac{1}{2 ^\frac{3}{p}},\ldots \r.$$
Since $\overline x$ is a fixed vector, then it must, by Proposition \ref{estoy en J_P}, belong to $J_P$. Also, $\overline x\in r_PS_X$ since
\begin{align*}
\|\overline x\|_p^p&=2 \sum_{i=0}^\infty \l\f{1}{2^{\frac{1}{p}}}\r^{ip}=2\cdot \f{1}{1-\f{1}{2}}=4.
\end{align*}
   
\begin{proof}[Proof that $P|_{J_P}$ is chaotic]
  
 Since $J_P$ is closed and $P$-invariant, by Birkhoff's transitivity Theorem $P|_{J_P}$ is chaotic if and only if $P|_{J_P}$ is transitive and the periodic vectors of $P$ are dense in $J_P$.
  
 Let $U,V$ be nonempty open sets intersecting $J_P$. Let $y\in V\cap J_P$, $x\in U\cap J_P$. Since $B^n(x)\rightarrow 0$ and $\|P^n(x)\|=|c_n(x)|\|B^n(x)\|\geq r_P$
 we have that $c_n(x)\rightarrow \infty$.
 
 We will perturb $x$ to a vector $\tilde x$ in such a way that $\tilde x\in U$ and for some $n$, $P^n(\tilde x)=y$. Notice that, since $J_P$ is completely
 invariant, this implies that $\tilde x \in J_P$. Consider $x^n$ the vector $x^n=\sum_{i=1}^n e_i'(x)e_i$.
  
 Clearly $x^n\rightarrow x$ in $\ell_p$. Since $c_n(x)$ reads only the first $n$ coordinates, $c_n(x)=c_n(x^n)$.
 Consider $\tilde x^n=x^n+\frac{S^n(y)}{c_n(x)}$, where $S$ denotes the forward shift. Since $\|S^n(y)\|_p=\|y\|_p$ and $c_n(x)\rightarrow \infty$, it follows that
 $\frac{S^n(y)}{c_n(x)}\rightarrow 0$ and that  $\tilde x^n\to x$.
 Thus, for large $n$, $\tilde x^n\in U$. Also,
 \begin{align*}
  P^n(\tilde x^n)&=c_n(\tilde x^n)B^n\l x^n+\frac{S^n(y)}{c_n(x)}\r\\
  &=c_n(x)\frac{y}{c_n(x)}=y.
 \end{align*}  
 Therefore we may take  $\tilde x=\tilde x^n$ for $n$ sufficiently large.

 Next we show that the periodic vectors are dense in $J_P$. Let $y\in J_P$ and $\epsilon>0$. Since $y\in J_P$, there is some $n_0$ with $|c_n(y)|>1$ for every $n\geq n_0$. Consider $y^n=\sum_{i=1}^n e_i'(y)e_i$. The vector
 $$
 \tilde y^n=\sum_{i=0}^\infty \frac{S^{ni}(y^n)}{c_n(y)^i}=y^n+\sum_{i=1}^\infty \frac{S^{ni}(y^n)}{c_n(y)^i}
 $$
 is $n$-periodic for $P$. Indeed 
 \begin{align*}
 P^n(\tilde y^n)&=c_n(\tilde y^n) B^n\l\sum_{i=0}^\infty \frac{S^{ni}(y^n)}{c_n(y)^i}\r
 =c_n(y)\sum_{i=0}^\infty \frac{B^nS^{ni}(y^n)}{c_n(y)^i}\\
 &=\sum_{i=1}^\infty \frac{S^{n(i-1)}(y^n)}{c_n(y)^{i-1}}=\tilde y^n.
 \end{align*}
 If $n\geq n_0$ it is also well defined, because
 \begin{align*}
 \|\tilde y^n\|_p&\leq \sum_{i=0}^\infty \frac{\|S^{ni}(y^n)\|_p}{|c_n(y)|^i}\leq \sum_{i=0}^\infty \frac{\|y\|_p}{|c_n(y)|^i}<\infty. 
 \end{align*}
 Finally $\tilde y^n\to y$, since
 \begin{align*}
 \|\tilde y^n -y^n\|_p&\leq  \|y\|_p\sum_{i=1}^\infty \frac{1}{|c_n(y)|^i}\to 0.
 \end{align*}  
 \end{proof}

\begin{proof}[Proof that $J_P$ is $d$-dense, $d>r_P$]
Let $x\in \ell_p$. Recall that the vector $\overline x=2^\frac{1}{p}\l1,\frac{1}{2 ^\frac{1}{p}},\frac{1}{2 ^\frac{2}{p}},\frac{1}{2 ^\frac{3}{p}},\ldots \r$ belongs to $J_P$ and has norm equal to $r_P$.
Denoting $sign (0)=1$ we define $\tilde x$ as
$$\tilde x_i=sign (x_i) \max \{|x_i|,|\overline x_i|\}.$$
For each $i$ we have that $|\tilde x_i|\geq |\overline x_i|$. This implies that for each $n$, $\|P^n(\tilde x)\|_p\geq \|P^n(\overline x)\|_p\geq r_P$ and hence the vector belongs to $J_P$. Also $\|x-\tilde x\|_p\leq \|\overline x\|_p=r_P$.
\end{proof}
\begin{proof}[Proof that $J_P$ is weakly dense] Let $U=U_{\{\epsilon,x_0,\varphi_1,\ldots \varphi_n\}}$ be a basic weakly open set.
 
 Let $0\neq y\in \bigcap_{i=1}^n \ Ker(\varphi_i)$.  Let $M=\max\{\|\varphi_i\|\}$ and 
$\tilde\epsilon=\f{\epsilon}{M}$. We consider for each $n\in\zN$, $y^n=n\cdot y+x_0$. The vector $y^n$ may fail to belong to $J_P$, however we can find a perturbation $x^n$ of $y^n$ so that $x^n\in J_P\cap U$.
Noting again $sign(0)=1$ and considering the fixed vector $\overline x$, we define
 
 $$x^n_j:= sign(y^n_j) \max\{|y^n_j|,\f{\tilde \epsilon}{r_P}|\overline x_j|\}.$$
 
 Clearly $\|x^n-y^n\|_p\leq \f{\tilde\epsilon}{r_P}\|\overline x\|_p=\tilde\epsilon$.  We claim that for large $n$, $x^n\in J_P$. Let $k$ such that $y_k\ne 0$,  so that $0\neq x^n_k\to \infty$ as $n\to\infty$.
 
 The vector
 $$
 z^n=\frac{\tilde\epsilon}{r_P} \l\overline x_1,\overline x_2,\ldots,\overline x_{k-1},x^n_k,\overline x_{k+1},\ldots \r
 $$
 satisfies that $|x^n_j|\geq |z^n_j|$ for every $j\geq 0$ and hence if $z^n$ belongs to $J_P$ so does $x^n$. But 
 $$
 P^k(z^n)= \l\frac{\tilde\epsilon}{r_P}\r^{2^k}\f{c_k\l(\overline x_1,\overline x_2,\ldots,\overline x_{k-1},x^n_k,\overline x_{k+1},\ldots) \r}{c_k(\overline x)}P^k (\overline x)
=\l\frac{\tilde\epsilon}{r_P}\r^{2^k}\f{x^n_k}{\overline x_k}P^k (\overline x)
=\left( \frac{\tilde\epsilon}{r_P} \right) ^{2^k}\f{x^n_k}{\overline x_k} \overline x.
 $$
  Thus for big enough $n$ we get that $P^k(z^n)=\lambda \overline x$ for some $|\lambda|\ge1$ and therefore $P^k(z^n)\in J_P$ for large $n$. Since $J_P$ is completely invariant this implies that $z^n\in J_P$.
 
 Finally $x^n$ belongs to $U$ because,
 $|\varphi_i (x^n-x_0)|
 \le \|\varphi_i\| \|x^n-y^n\|_p+ |\varphi_i(y^n-x_0)|\leq  \epsilon$.
 \end{proof}
 \begin{proof}[Proof that $\Gamma\cdot J_P$ is dense for every $\Gamma$ with zero as accumulation point]
Let $x\in\ell_p ,$ $\epsilon>0$ and $\gamma\in \Gamma$  such that $\f{\epsilon}{r_p\gamma}>1$.
We consider
$$z=\frac{1}{\gamma}sign (x_j)\max \{|x_j|,\f{\epsilon}{r_P}\overline x_j\}.$$
The vector $z$ is in $J_P$ since for each coordinate $|z_j|>|\overline x_j|$ and $\overline x_j\in J_P$. It is also immediate that $\|\gamma z-x\|_p<\epsilon.$
 
\end{proof}
\begin{proof}[Proof that $P$ is $\Gamma$ supercyclic for every unbounded $\Gamma$]
We will apply Criterion \ref{criterion}. Let $X_0=Y_0$ be a dense set of vectors with nonzero coordinates. For $x,y\in X_0$ fixed we choose $(\lambda_n)_n\subset \Gamma$ such that $\lambda_nc_n(x)\to \infty$. Let $x^n$ be the vector $x^n_j= \chi_{[1,n]}(j) x_j$, i.e. $x^n$ is the truncation of $x$. Finally we define the inverses $F_n(x)$ as $F_n(x)(y)= 
x^n-x+\f{S^{n}(y)}{c_n(x)\lambda_n}$, where $S$ is the forward shift operator.
It is clear that $F_n(x)(y)\to 0$. Also
\begin{align*}
\lambda_n P^n(x+F_n(x)(y))&=\lambda_n P^n\l x^n+\f{S^{n}(y)}{c_n(x)\lambda_n}\r=
\lambda_n c_n\l x^n+\f{S^{n}(y)}{c_n(x)\lambda_n}\r B^n\l x^n+\f{S^{n}(y)}{c_n(x)\lambda_n}\r  \\
&=
\lambda_n c_n(x)\f{y}{c_n(x)\lambda_n}
=y.
\end{align*} 
	\end{proof}

We end the example by pointing some comments.
The first to notice  is that we can arrive to $d$-dense orbits of arbitrary small radius.
\begin{remark}
For every $d>0$ there is some $d$-hypercyclic homogeneous polynomial.
\end{remark}
\begin{proof}
Let $P$ be the 2-homogeneous polynomial from the above theorem. By Proposition \ref{quasiI}, $\lambda P$ is quasiconjugated to $P$ for every $\lambda\neq 0$ under a linear isomorphism of norm $\f{1}{|\lambda|}$. By Proposition \ref{d hiper}, this implies that $\lambda P$ is $d$-hypercyclic for every $d>r_{\lambda P}=\f{r_P}{|\lambda|}$. 
\end{proof}

 \begin{remark}
  Proposition \ref{seg} says that $tx$ with $|t|<1$ is not   an accumulation point of $Orb_P(x)$. The example from Theorem \ref{mainexample} shows that this does not happen for $|t|\ge1$.  Indeed, if $x$ is a point whose $P$-orbit is dense in ${J_P}$, $tx$ is in $J_P$ for $|t|\ge1$ and hence it is an accumulation point of the $P$-orbit of $x$. 
 \end{remark}
 
 \begin{remark}
 The Julia set of the polynomial of Theorem \ref{mainexample} satisfies another nice property: the image of $J_P$ under the backward shift is dense in $\ell_p$. This implies that the dense orbit in $J_P$ must satisfy that $\overline{\{B(P^n(x))\}}=\ell_p$. Thus, the family $\{c_n(\cdot)B^{n+1}(\cdot)\}$ is an universal family of homogeneous polynomials while $\{c_n(\cdot)B^n(\cdot)\}=\{P^n\}$ has only nowhere dense orbits.  
 \end{remark}
 \begin{proof}[Proof that $\overline{B(J_P)}=\ell_p$]
 Let $\epsilon>0$ and $x\in\ell_p$. Let $\overline x$ be a fixed vector that satisfies $\|\overline x\|_p=r_P$.
 Define $y\in \ell_p$ as $y_n=sign(x_n)\max\{|x_n|,\epsilon|\overline x_n|\}$. Thus $\|y-x\|_p\le \epsilon r_P$ and $|y_n|\ge \epsilon|\overline x_n|$ for each $n$. We define $z=\frac{e_1}{\epsilon}+S(y)$. Then $B(z)=y$ and $z\in J_P$. Indeed, $P(z)=\frac{y}{\epsilon}$ which satisfies $|\frac{y_n}{\epsilon}|>|\overline x_n|$ for each coordinate $n$. Thus, $P(z)$ belongs to $J_P$  and therefore $z$ is also in $J_P$. 
 \end{proof}
Recall that a dynamical system $(X,F)$ is said to be Devaney chaotic if it is  transitive, the periodic vectors are dense  and if it has \textit{sensitive dependence on the initial conditions}. This last condition means that 
there exists a neighborhood $U$ of 0 such that for every $x\in X$ and every neighborhood $V$ of 0, there is some $n\in\zN$ and $y\in x+V$ such that $F^n(y)\notin U+F^n(x)$.
On metric spaces, sensitivity on the initial conditions is implied by the transitivity and the density of periodic vectors and, of course, no nonlinear homogeneous polynomial on a Banach space can have sensitivity on the initial conditions. But, if we consider the weak topology on the Banach space, we may have Devaney chaotic polynomials. Indeed, if $P$ is the polynomial from Theorem \ref{mainexample}, we already proved that $P|_{J_P}$ satisfies that it is transitive and that the periodic vectors are dense, and moreover we saw that $J_P$ is weakly dense in $\ell_p$. Thus, $P$ satisfies the first two conditions of Devaney chaoticity. To see the last one, just take $U=\{x\in\ell_p:\,|x_1|<1\}$. Then for $x\in\ell_p$, $V$ a weak neighborhood of 0, which we may suppose that, for some $\epsilon>0$ and some $n$,  $V\supset \{x\in\ell_p:\,|x_j|<\epsilon,\, j=1,\dots,n\}$. define $y\in\ell_p$ as,
$$
y_k=\left\{\begin{array}{ll}
	\epsilon/2 & \text{ if }k\le n,\, x_k=0,\\
	x_k & \text{ if }k\le n,\, x_k\ne 0,\\
		c & \text{ if }k= n+1,\\
	0 & \text{ if }k> n+1.
\end{array}\right. 
$$
Then $y\in x+V$ for every $c$ and 
$$
e_1'(P^nx-P^ny)=e_1'\Big(c_n(x)B^n(x)-c_n(y)ce_1\Big)= c_n(x)x_{n+1}-c_n(y)c.
$$
Since $c_n(y)\ne 0$ and it does not depend on $c$, we can find $c$ such that $|e_1'(P^nx-P^ny)|>1$ and thus $P^n(y)\notin U+P^n(x)$. Therefore $P=e_1'\cdot B$ is Devaney chaotic on $(\ell_p,w)$.

%

\begin{example}\label{no completamente inv}
	Using the polynomial $P$ defined in Theorem \ref{mainexample}, it is possible to exhibit an example of a homogeneous polynomial whose Julia set is not completely invariant: let $X=\mathbb C\oplus_\infty c_0(c_0)$ and define $Q\in \mathcal P(^2X)$ as 
	$$
	Q(\lambda,(y^j)_{j\in\mathbb N})=(0,P(\lambda x),P(y^1),P(y^2),\dots),
	$$
where $x\in c_0$ is a vector in the Julia set of $P$. Then, for $|\lambda|>1$, the vector $(\lambda, (0)_j)$ is in $R_Q$ because if $|t|>1$,
$$
\|Q^n(t,(y^j)_{j\in\mathbb N})\|_X\ge \|P^n(tx)\|_\infty\to\infty.
$$
On the other hand, the vector $Q(\lambda, (0)_j)=(0,P(\lambda x),0,0,\dots)$ belongs to $J_Q$. Indeed, we may approximate $P(\lambda x)$ by a vector $z\in c_0$ with only  finite nonzero coordinates. Then the vector $(0,z,0,0,\dots)$ approximates $(0,P(\lambda x),0,0,\dots)$ in $X$ and clearly, $(0,z,0,0,\dots)$ is in $A_Q$. 
\end{example}	
	
\subsection{Numerically hypercyclicity}
The notion of numerically hypercyclicity was recently introduced by Kim, Peris and Song \cite{KimPer12}. A function  $F$ is said to be numerically hypercyclic provided that there are vectors $x\in S_X,x^*\in S_{X^*}$, with $x^*(x)=1$ and such that its numerical orbit $Norb_F(x,x^*):=\{x^*(F^n(x)):\, n\in \mathbb N_0\}$ is dense in $\C$. In \cite{KimPer12,KimPer122} the authors proved that every infinite dimensional and separable Banach space supports a homogeneous polynomial of degree $d\geq 1$. In view of the Example \ref{mainexample} one may expect that the homogeneous polynomial $P(x)=x_1B(x)$ is numerically hypercyclic in $\ell_p$ or $c_0$. However since the limit radius of the polynomial is $r_P=2^\f{2}{p}>1$, for $X=\ell_p$  and $r_P=1$ for $X=c_0$, $\|P^n(x)\|\le r_P$ for every $x\in S_X$ and every $n$, and thus the numerical orbits  can never be dense.

The lack of numerically dense orbits is only due to the fact that $B_X\sub r_PB_X$. Thus, by considering a multiple of the polynomial and exploiting that homogeneous polynomials behave well under conjugation via a linear isomorphism we can easily construct a numerically hypercyclic homogeneous polynomial.
\begin{example}[A numerically hypercyclic homogeneous polynomial]
Let $X=\ell_p$ and $P(x)=x_1 B(x)$. Let $x_0\in J_P$ such that the orbit of $x_0$ under $P$ is dense in ${J_P}$ is hypercyclic.  Consider $\tilde P=\|x_0\| P$.  
By Proposition \ref{conjugar J_P} applied to $\Phi(x)=\f{1}{\|x_0\|}x$ it follows that $J_{\tilde P}= \|x_0\|J_P$ and that $\f{x_0}{\|x_0\|}$ is hypercyclic for $\tilde P|_{J_{\tilde P}}$. 
Now, by Example \ref{mainexample} and Proposition \ref{weakly conj}, $\f{x_0}{\|x_0\|}$ is a weakly hypercyclic vector for $\tilde P$. Thus, for any $x^*\in S_{X^*}$ it satisfies that $\overline{x^*({\tilde {P} }^n(x))}=\C$. 
\end{example} 


\begin{remark}
	There are several notions of numerical range for nonlinear mappings on Banach spaces, which coincide for linear operators (see for example \cite[Chapter 11]{appell2004nonlinear}).
 The numerical range in the sense of Bauer of a mapping $F$ is the set 
	$$
\{x^*(F(x)): x^*(x)=\|x\|^2,\, \|x\|=\|x^*\|\},
$$
i.e. the norm of $x$ and $x^*$ is allowed to be different from 1. Inspired in this notion of numerical range, a polynomial would be numerically hypercyclic if there are vectors $x\in X,x^*\in {X^*}$, with $x^*(x)=\|x\|^2$, $\|x\|=\|x^*\|$ and such that its numerical orbit $Norb_F(x,x^*):=\{x^*(F^n(x)):\, n\in \mathbb N_0\}$ is dense in $\C$. With this modified definition, the polynomial of Theorem \ref{mainexample} would also be numerically hypercyclic.
\end{remark}

\section{Existence of $d$-hypercyclic, weakly hypercyclic and $\Gamma$-supercyclic homogeneous polynomials on arbitrary Fr\'echet spaces}\label{Mainsection}

In this section we will prove our main Theorem \ref{main theorem}, which states that on any separable and infinite dimensional Fr\'echet space there exists a $d$-hypercyclic, weakly hypercyclic and $\Gamma$-supercyclic homogeneous polynomial for every subset $\Gamma$ which is unbounded or not bounded away from zero.

The   proof will follow some of the ideas of Bonet and Peris \cite{BonePer98} to prove the existence of hypercyclic operators on arbitrary Fr\'echet spaces. We will look for a polynomial $P$ acting on $\ell_1$ satisfying all the mentioned properties and at the same time we want to find on each Fr\'echet space $X$ a polynomial $Q$ being quasiconjugated to $P$.

The polynomial $P$ on $\ell_1$ will be of the type $e_1'(x)B_w(x)$, where $B_w$ is a weighted backward shift. The main difference with Bonet and Peris proof is that not every weight works. Indeed, if the weights are too small then the Julia set can be  empty. This forces us to adapt their main Lemma \cite[Lemma 2]{BonePer98} to our requirements.  

\begin{theorem}\label{Ejl1}
  Let $X=\ell_1$ and $P$ the homogeneous polynomial $P=e_1'\cdot B_{\omega}\in \PP(^2\ell_1;\ell_1)$, where $B_{\omega}$ is the weighted backward shift defined as $[B_\omega(x)]_i= \frac{1}{(i+1)^4}x_{i+1}$. Then $P$ is weakly hypercyclic, $d$-hypercyclic and $\Gamma$-supercyclic for every $\Gamma\subset \mathbb C$ such that $\Gamma$ is unbounded or zero is an accumulation point of $\Gamma$. 
\end{theorem}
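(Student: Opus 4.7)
The plan is to follow the structure of the proof of Theorem \ref{mainexample}, using the reference-vector method, but adapted to the weighted shift. First I would compute $\|P\|$ by a Lagrange-multiplier argument on $|x_1|\sum_{k\geq 2}|x_k|/k^4$ with $\|x\|_1=1$; the optimum is achieved at $x=\frac12 e_1+\frac12 e_2$, giving $\|P\|=1/64$ and $r_P=64$. Next, since any $x\in c_{00}$ satisfies $P^n(x)=0$ once $n$ exceeds the length of $\mathrm{supp}(x)$, we have $c_{00}\subseteq A_P$, so $A_P$ is dense in $\ell_1$, forcing $R_P=\emptyset$ and hence $J_P=A_P^c$ is closed with empty interior and (by Proposition \ref{completely invariant}) completely invariant.

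The polynomial $P=e_1'\cdot B_\omega$ has no fixed vector in $\ell_1$: a fixed vector would satisfy the recursion $\overline x_{n+1}=(n+1)^4\overline x_n/\overline x_1$, forcing $|\overline x_n|$ to grow like $(n!)^4/|\overline x_1|^{n-2}$, which contradicts summability. Instead I would construct $\overline x=(N,N,\delta,\delta^2,\delta^3,\dots)$ with $N$ large and $\delta\in(0,1)$. Writing $P^n(x)=c_n(x)B_\omega^n(x)$ with $c_{n+1}(x)=c_n(x)^2\,x_{n+1}/((n+1)!)^4$, one checks $|c_n(\overline x)|\sim N^{3\cdot 2^{n-2}}$ (doubly exponential) and $\|B_\omega^n(\overline x)\|_1\sim \delta^{n-1}/((n+1)!)^4$, so $\|P^n(\overline x)\|_1\to\infty$ and $\overline x\in J_P$. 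With $\overline x$ in hand, the three density properties of $J_P$ are verbatim those of Theorem \ref{mainexample}: coordinate-wise domination implies $|c_n(y)|\geq|c_n(\overline x)|$ and $\|B_\omega^n(y)\|_1\geq\|B_\omega^n(\overline x)\|_1$, so that $y\in J_P$ whenever $|y_k|\geq|\overline x_k|$ for all $k$. This yields $d$-density with $d=\|\overline x\|_1$; the weak-density argument uses the same $z^n$ construction as in Theorem \ref{mainexample} (now with $P^k(z^n)=\alpha^{2^k-1}(x^n_k/\overline x_k)P^k(\overline x)$, a large multiple of $P^k(\overline x)\in J_P$, which lies in $J_P$ by complete invariance); and the $\Gamma$-density for $\Gamma$ accumulating at $0$ is obtained by picking $\gamma\in\Gamma$ with $|\gamma|$ small, $z=\gamma^{-1}\mathrm{sign}(x)\max(|x|,\epsilon_0|\overline x|)$, so that $z$ still dominates $\overline x$ while $\|\gamma z-x\|_1<\epsilon$.

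To obtain a single vector $x_0\in J_P$ whose $P$-orbit is dense in $J_P$, I would enumerate a countable dense subset $\{y^{(k)}\}_{k\geq 1}$ of $J_P$ and build $x_0=\overline x+\sum_{k\geq 1}h_k$ by the standard Birkhoff-type recursive perturbation, each $h_k$ being supported in a tail window $[N_k+1,N_k+M_k]$ with $\|h_k\|_1<2^{-k}$. The key formula uses the weighted right-inverse $F$ of $B_\omega$ defined by $[F(w)]_{j+1}=(j+1)^4 w_j$ (so that $B_\omega^n F^n=\mathrm{id}$), and the truncation $x_0^{N_k}$: setting $h_k=F^{N_k}(y^{(k),m_k})/c_{N_k}(x_0)$ (modulo the tail-truncation correction $x_0-x_0^{N_k}$) yields $P^{N_k}(x_0)=y^{(k),m_k}$, which approximates $y^{(k)}$. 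Because $h_k$ lives beyond position $N_k\geq 2$, the first two coordinates of $x_0$ remain equal to $N$, so $|c_n(x_0)|\sim N^{3\cdot 2^{n-2}}$ retains its doubly-exponential growth; this is exactly what makes $\|F^n(y^{(k),m_k})\|_1/|c_n(x_0)|\leq ((n+m_k)!)^4\,\|y^{(k)}\|_1/|c_n(x_0)|$ summable and forces the domination $x_0\in J_P$.

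Finally, for $\Gamma$ unbounded but possibly bounded away from $0$, the $\Gamma$-supercyclic vector cannot lie in $J_P$, so I would instead invoke the $\Gamma$-transitivity criterion (Proposition \ref{prop criterio super}): with $X_0=Y_0$ a countable dense set of vectors having only nonzero coordinates, and $\lambda_n\in\Gamma$ chosen with $|\lambda_n c_n(x)|\to\infty$, define $S_n(x)(y)=x^n-x+F^n(y)/(c_n(x)\lambda_n)$; one checks $S_n(x)(y)\to 0$ and $\lambda_n P^n(x+S_n(x)(y))=y$ verbatim as in the final step of Theorem \ref{mainexample}. \textbf{The main obstacle} is that the minimal growth $|c_n(x)|\gtrsim r_P((n+1)!)^4/\|x\|_1$ guaranteed for general $x\in J_P$ is not sufficient to make the transitivity perturbation converge in $\ell_1$ (one needs $|c_n(x)|$ to dominate $((n+m)!)^4$, i.e., super-factorial growth). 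This is overcome by anchoring the construction at $\overline x$, whose large initial coordinates $N,N$ inject the required doubly-exponential growth of $c_n$ and make it robust under subsequent tail perturbations.
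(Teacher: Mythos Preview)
Your overall structure matches the paper's: prove that $P|_{J_P}$ is hypercyclic and that $J_P$ is $d$-dense, weakly dense, and satisfies $\overline{\Gamma\cdot J_P}=\ell_1$ when $0\in\overline\Gamma$, then handle unbounded $\Gamma$ separately via the $\Gamma$-transitivity criterion. The coordinate-domination arguments for the three density properties are essentially those of the paper (which uses the reference vector $\overline x=(N,1/2!,1/3!,\dots)$ together with Lemma~\ref{en Jp} to place such vectors in $J_P$; your $(N,N,\delta,\delta^2,\dots)$ works just as well).

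There are two genuine gaps.

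\emph{Hypercyclicity on $J_P$.} Your claim that keeping the first two coordinates of $x_0$ equal to $N$ forces $|c_n(x_0)|\sim N^{3\cdot 2^{n-2}}$ for all $n$ is not correct. The quantity $c_n$ (in the paper's notation $c_n(x)c_n(\Omega)$) reads all of the first $n$ coordinates, and your blocks $h_k$ overwrite the coordinates in $[N_k{+}1,N_k{+}m_k]$ with entries of size $\sim 1/|c_{N_k}(x_0)|$, i.e.\ doubly-exponentially small. A direct computation from your recursion gives $c_{N_k+j}(x_0)=c_{N_k}(x_0)\cdot c_j(y^{(k)})$ for $1\le j\le m_k$, so inside the $k$-th window the growth of $c_n(x_0)$ is controlled by $c_j(y^{(k)})$, not by $N$; in particular it does \emph{not} double at each step there. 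To rescue the direct construction you would have to bound $c_j(y^{(k)})$ from below uniformly in $j\le m_k$ and also check $\|P^n(x_0)\|\ge r_P$ for every $n$ in both windows and gaps, none of which follows from fixing the first two coordinates. The paper avoids all of this by proving \emph{transitivity} of $P|_{J_P}$ rather than building the hypercyclic vector by hand: it passes to the dense subset $X=\{x\in J_P:\ tx\in J_P\ \text{for some}\ |t|<1\}$, on which the identity $c_n(x)=t^{2^n-1}c_n(x/t)$ gives $|c_n(x)c_n(\Omega)|\ge C|t|^{2^n}$ for free, restricts the targets to a dense set $Y\subset J_P$ with factorial tails so that $S_\omega^n(y)\in\ell_1$, and then uses complete invariance of $J_P$ to conclude that the perturbed vector $x^n+S_\omega^n(y)/(c_n(x)c_n(\Omega))$ lies in $J_P$ because its $P^n$-image equals $y\in J_P$.

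\emph{Unbounded $\Gamma$.} Your right-inverse $F$ inflates the $j$-th coordinate by the factor $((j{+}n)!/j!)^4$, so $F^n(y)\notin\ell_1$ for a generic $y$ with infinitely many nonzero coordinates; taking $Y_0$ to consist of vectors with only nonzero coordinates does not cure this. The paper restricts the targets to the dense set $Y=\{y\in\ell_1:\ S_\omega^k(y)\in\ell_1\ \text{for all}\ k\}$ (obtained by replacing the tail of any vector by $(1/j!)_j$); with that choice one can pick $\lambda_n\in\Gamma$ large enough that $S_\omega^n(y)/(\lambda_n\, c_n(x)c_n(\Omega))\to 0$, after which your argument goes through verbatim.
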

The strategy of the proof is the same that we used in Example \ref{mainexample}. We will prove that $P|_{J_P}$ is hypercyclic and that $J_P$ is weakly dense, $d$-dense and $\overline{\Gamma J_P}=\ell_1$ for $\Gamma$ that accumulates at zero. Again we will analyze the case when  $\Gamma$ is unbounded  separately.

Note that $c_{00}\sub \bigcup_n Ker P^n$ so that $R_P=\emptyset$ and by Proposition \ref{completely invariant}, $J_P$ is completely invariant.  The fact that $R_P=\emptyset$ implies also that 
if $x\in J_P$ then all the ray $\{tx:|t|\ge 1\}\sub J_P$.

The iterations of a vector $x$ are 
$$
P^n(x)=\big(x_1^{2^{n-1}}\dots x_n\big)\cdot\Big( \frac{1}{1^{2^{n-1}}}\frac{1}{(2!^4)^{2^{n-2}}}\dots \frac{1}{((n-1)!^4)^{2}}\frac{1}{n!^4}\Big)\cdot B^n_{\omega}(x).
$$
So if we define $\Omega\in \C^\zN$ as $(\Omega)_n=\frac{1}{(n!)^4}$, and $c_n:\C^\zN\rightarrow \C$ as $c_n(z)=\prod_{i=1}^n z_i^{2^{n-i}}$,
then
$$P^n(x)=c_n(x)c_n(\Omega)B_\omega^n(x).$$

We must verify that the Julia is not empty. A typical member of $J_P$ must satisfy that the first coordinates are large in comparison
to the decreasing rate of the tail and the weights $\frac{1}{n!^4}$.
%
%

\begin{lemma}\label{en Jp}
	Let $j,l\in \zN$ and $k_j>1$ such that for all $n\in \zN$,
	$$
	k_j\l  2^{2^{\frac{n+1}{2}}}\r^{\sqrt 2-1}\geq (n+1)!^4(j+n-1)!^l.
	$$ 
	Then for every $N\geq k_j2^{2^{\frac{1}{2}}}$ the vector
	\begin{equation*}
	x=\l N,\frac{1}{j!^l}, \frac{1}{(j+1)!^l},...\r
	\end{equation*}
	belongs to $J_P$.
\end{lemma}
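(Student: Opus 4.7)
The plan is to show $x \in J_P$ by verifying $\|P^n(x)\|_1 \ge r_P = \|P\|^{-1}$ for every $n \ge 1$. This suffices because, as the paper observes just before the lemma, $c_{00} \subset \bigcup_n \ker P^n \subset A_P$, hence $A_P$ is dense and $R_P = \emptyset$, so $J_P = A_P^c$; by Proposition \ref{Ber}, a vector lies in $A_P^c$ exactly when no iterate enters the open limit ball $r_P B_X$. Using the formula $P^n(x) = c_n(x) c_n(\Omega) B_\omega^n(x)$ already recorded in the paper, I would read off $[P^n(x)]_1 = D_n/E_n$ where $D_n := c_n(x) c_n(\Omega)$ and $E_n := (n+1)!^4 (j+n-1)!^l$. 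Since the entries of $x$ are positive, $\|P^n(x)\|_1 \ge D_n/E_n$, reducing the goal to showing $\alpha_n := D_n/E_n \ge r_P$ for every $n$.

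The identities $c_{n+1}(z) = c_n(z)^2 z_{n+1}$ and $x_{n+1}\Omega_{n+1} = 1/E_n$ yield the clean recursion $\alpha_1 = N/E_1$ and $\alpha_{n+1} = \alpha_n^2/\beta_n$, where $\beta_n := E_{n+1}/E_n = (n+2)^4 (j+n)^l$. Iterating produces $\log \alpha_n = 2^{n-1}\log \alpha_1 - \sum_{k=1}^{n-1} 2^{n-1-k}\log \beta_k$, so $\alpha_n \ge r_P$ for every $n$ is equivalent to $\log \alpha_1 \ge \sup_n\bigl[\log r_P/2^{n-1} + \sum_{k=1}^{n-1}\log \beta_k / 2^k\bigr]$. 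The key observation is that $\beta_n \ge 3^4 = 81 > 64 = r_P$, so the bracketed sequence is strictly increasing in $n$ and its supremum equals the limit $\sum_{k=1}^\infty \log \beta_k/2^k$. A telescoping of partial sums rewrites this infinite series as $\sum_{k=1}^\infty \log E_k / 2^k - \log E_1$, so everything reduces to bounding $\sum_{k=1}^\infty \log E_k/2^k$ from above.

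This is exactly what the hypothesis provides. Applying $\log_2 E_k \le \log_2 k_j + (\sqrt 2 - 1) 2^{(k+1)/2}$ term by term and summing against $1/2^k$, I pull out the geometric series $\sum_{k \ge 1} 2^{-k/2} = 1/(\sqrt 2 - 1) = \sqrt 2 + 1$; the factor $(\sqrt 2 - 1)$ from the hypothesis cancels against its reciprocal, producing the concise bound $\sum_{k=1}^\infty \log E_k/2^k \le \log k_j + \sqrt 2 \log 2$. Combined with $N \ge k_j \cdot 2^{\sqrt 2}$, this yields $\log \alpha_1 = \log N - \log E_1 \ge \log k_j + \sqrt 2 \log 2 - \log E_1 \ge \sum_{k=1}^\infty \log \beta_k/2^k$, closing the argument. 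The main obstacle, and also the reason the hypothesis is stated in this peculiar double-exponential form, is precisely this algebraic matching: the exponent $(\sqrt 2 - 1) 2^{(k+1)/2}$ in the hypothesis and the base $2^{\sqrt 2} = 2^{2^{1/2}}$ in the bound on $N$ are calibrated so that the identity $(\sqrt 2 - 1)(\sqrt 2 + 1) = 1$ makes the geometric sum collapse to exactly the slack permitted by the hypothesis on $N$.
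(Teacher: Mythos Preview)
Your argument is correct but follows a different route from the paper's. The paper proves by a two-line induction that $D_n := c_n(x)c_n(\Omega) \ge k_j\,2^{2^{n/2}}$: the inductive step uses $D_{n+1} = D_n^2/E_n$ together with the identity $2^{n/2+1} - 2^{(n+1)/2} = (\sqrt 2 - 1)\,2^{(n+1)/2}$, so the hypothesis on $k_j$ is exactly what makes the squaring beat the factorial loss. This immediately gives $\|P^n(x)\| \ge D_n/E_n \to \infty$, hence $x \notin A_P$, hence $x \in J_P$ since $R_P = \emptyset$. Your approach instead solves the recursion for $\alpha_n = D_n/E_n$ explicitly, reduces the uniform bound $\alpha_n \ge r_P$ to controlling the series $\sum_k \log E_k/2^k$, and then uses the hypothesis termwise to collapse the geometric sum via $(\sqrt 2 - 1)(\sqrt 2 + 1) = 1$. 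The paper's proof is shorter and never needs the value of $r_P$; your proof is longer but makes transparent why the hypothesis has its peculiar double-exponential shape and where the threshold $N \ge k_j 2^{\sqrt 2}$ comes from.

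One small point you leave implicit: the claim $r_P = 64$ requires computing $\|P\|$. On the unit ball of $\ell_1$, putting all the tail mass on $e_2$ gives $\|P(te_1 + (1-t)e_2)\|_1 = t(1-t)/16 \le 1/64$, with equality at $t = 1/2$, so indeed $\|P\| = 1/64$ and $r_P = 64 < 81 \le \beta_n$. With this in hand your monotonicity step (and hence $\sup_n s_n = \lim_n s_n$) goes through.
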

\begin{proof}
		We are going to prove by induction that for all $n$,
	$c_n(x)c_{n}(\Omega)\geq k_j 2^{2^{\frac{n}{2}}}$. By definition of $N$, $c_1(x)c_1(\Omega)=N\geq k_j 2^{2^\f{1}{2}}$. Suppose that our claim is true for some $n\ge1$, then,
	\begin{align*}
	c_{n+1}(x)c_{n+1}(\Omega)&
	=\frac{c_{n}(x)^2c_{n}(\Omega)^2}{(n+1)!^4 (j+n-1)!^l}
	\geq \f{k_j^2 2^{2^{\frac{n}{2}+1}}}{(n+1)!^4 (j+n-1)!^l}\\
	&=k_j2^{2^{\frac{n+1}{2}}}
	\f{k_j \l 2^{2^{\frac{n+1}{2}}}\r^{\sqrt 2-1}}{(n+1)!^4 (j+n-1)!^l} \ge k_j2^{2^{\frac{n+1}{2}}}.\\
	\end{align*}
	Thus, since $\|B_\omega ^n(x)\|\geq |B_\omega ^n(x)_1|= \f{1}{(n+1)!^4(j+n-1)!^l}$, it follows that $$\|P^n(x)\|\ge \f{k_j2^{2^{\frac{n+1}{2}}}}{(n+1)!^4(j+n-1)!^l}\to \infty,$$
	as $n\to \infty$, and therefore $x$ is in $J_P$.
\end{proof}

\begin{proof}[Proof that $P|_{J_P}$ is hypercyclic]
We prove that $P|_{J_P}$ is transitive. By Birkhoff's transitivity Theorem it follows that there exists a dense orbit in $J_P$.

Inspired in the hypercyclicity criterion for linear operators we will find two $J_P$-dense sets $X, \ Y$ and applications $S_{n,x}:Y\rightarrow \ell_1$ such that the following properties are satisfied:

i) for all $x\in X$, $y\in Y$, $S_{n,x}(y)\rightarrow 0$,

ii) if $x^n=\sum_{i=1}^n e_i'(x)e_i$, then $x^n+ S_{n,x}(y)\in J_P$ and

iii) for all $x\in X$, $y\in Y$, $P^n(x^n+S_{n,x}(y))\rightarrow y$. 

If those properties are satisfied then clearly $P|_{J_P}$ is transitive.

We define $X$ as $X:=\{x\in J_P: tx\in J_P \text{ for some } |t|<1\}$. Since $R_P$ is the empty set, if $x\in J_P$ then $tx\in J_P$ for all $|t|>1$. This implies that $X$ is dense in $J_P$.

For $y\in J_P$ we define $y^{(n)}\in \ell_1$ as
$$y^{(n)}_j:=\begin{cases}
                y_j&\text{ if } j\leq n;\\
                \frac{1}{j!}&\text{ if } j>n.
               \end{cases}$$
We define $Y=\{y^{(n)}:y^{(n)}\in J_P\text{ and } y\in X \}\subset X\subset J_P.$ We defined $Y$ in such a way because of two reasons. We want the formal inverse $S^n_\omega$ of $B^n_\omega$ to be defined in $Y$ for all $n$, but at the same time
we  want to have a (good) control of the decreasing rate of the tail.

Note that
\begin{equation}\label{coordenadas shift}
 S^k_\omega(y^{(n)})_j=\begin{cases}
                    0 &\text{ if } j\leq k;\\
                    \frac{j!^4}{(j-k)!^4}y_{j-k}&\text{ if } k< j\leq k+n;\\
                    \frac{j!^4}{(j-k)!^4 (j-k)!}&\text{ if } j> k+n.
                   \end{cases}      
\end{equation}
Thus $S^n_\omega(y^{(k)})\in\ell_1$ for all $k$.

In order to show that $Y$ is dense in $J_P$, it suffices to see that $Y$ is dense in $X$. 
Since $y^{(n)}\rightarrow y$, it suffices to show that if $y\in X$ then for large $n$, $y^{(n)}\in J_P$.
We will exploit  the fact that $c_n(y)$ grows very fast when $y\in X$. Let $|t|>1$ with $\frac{y}{t}\in J_P$. We have that there is some constant $C>0$ such that for every $n$,
\begin{equation}\label{c_n crece rapido}
 \left|c_n(y)c_n(\Omega)\right|=|t|^{2^n-1}\left|c_n\l\frac{y}{t}\r c_n(\Omega)\right|\geq C|t|^{2^n}.
\end{equation}
Since $P$ is completely invariant it suffices to check that $P^n(y^{(n)})=c_n(y)c_n(\Omega) (\f{1}{n!},\f{1}{(n+1)!},\ldots)$ belongs to $J_P$. 
The coordinates of this vector are, for sufficiently large $n$, greater than the coordinates of $( \f{c_n(y)c_n(\Omega)}{n!},\f{1}{(n+1)!},\ldots)$. Thus, it suffices to show that this new vector is in $J_P$ for large $n$. This is a straightforward application of Lemma \ref{en Jp} because $\f{t^{2^n}}{2(n+1)!^5}\to \infty$. Therefore, it follows that $y^{(n)}\in J_P$ for large $n$ and thus $Y$ is dense in $J_P$.

We are now able to define the inverse application $S_{n,x}(y)$. For $y\in Y$, $x\in X$, let $S_{n,x}(y):=\frac{S^n_{\omega}(y)}{c_{n}(x)c_n(\Omega)}$. Observe that $c_n$ reads only the first $n$ coordinates, so we get
\begin{align*}
 P^n\l x^n+\frac{S^n_{\omega}(y)}{c_{n}(x)c_n(\Omega)}\r&=c_n(x)c_n(\Omega)B_\omega^n\l x^n+\frac{S^n_{\omega}(y)}{c_n(x)c_n(\Omega)}\r\\
 &=y.
  \end{align*}
 Since $y\in J_P$ and $J_P$ is completely invariant it follows that $x^n+\frac{S^n_{\omega}(y)}{c_n(x)c_n(\Omega)}\in J_P$. So we have that properties ii) and iii) of our criterion are satisfied.
It remains to show that $\frac{S^n_{\omega}(y)}{c_{n}(x)c_n(\Omega)}\rightarrow 0$, but recall that for $x\in X$,   $c_n(x)c_n(\omega)|t|^{-2^n}\to \infty$ for some $|t|>1$. At the same time we have a good control of the decreasing rate of the tail of $y$.

Take $n$ such that $y=y^{(n)}$. Recall that by \eqref{coordenadas shift},
\begin{align*}
\|S^k_\omega(z^{(n)})\|&=\sum_{j=k+1}^{k+n} \frac{j!^4}{(j-k)!^4}|z_{j-k}|+ \sum_{j>{k+n}} \frac{j!^4}{(j-k)!^4! (j-k)!}\\
&\leq \|z\|\frac{(k+n!)^4}{n!^4}+\sum_{j>0} \frac{(j+k+n)!^4}{(j+n)!^5}.\\
\end{align*}
Note that  $\sum_{j>0} \frac{(j+k+n)!^4}{(j+n)!^5}$ is a convergent series. Let $|t|>1$ such that $\frac{x}{t}\in J_P$.  Then by  \eqref{c_n crece rapido}, we have 
\begin{equation}\label{periodic}
 \left\|\frac{S^k_{\omega}(y)}{c_{k}(x)c_k(\Omega)}\right\|\leq \frac{\|z\|\frac{(k+n!)^4}{n!^4}+\sum_{j>0} \frac{(j+k+n)!^4}{(j+n)!^5}}{Ct^{2^k}}\rightarrow 0,
\end{equation}
This proves that $P|_{J_P}$ is transitive.
\end{proof}



\begin{proof}[Proof that $J_P$ is $d$-dense]
	Let $x\in \ell_1$. Let $N$ such that the vector $\overline x=(N,\f{1}{2!},\f{1}{3!},\ldots)\in J_P$.
	We define $\tilde x$ as
	$$\tilde x_i=sign (x_i) \max \{|x_i|,|\overline x_i|\},$$
	with $sign (0)=1$.
	For each $i$ we have that $|\tilde x_i|\geq |\overline x_i|$. This implies that for each $n$, $\|P^n(\tilde x)\|\geq \|P^n(\overline x)\|\geq r_P$ and hence the vector belongs to $J_P$. Also $\|x-\tilde x\|\leq \|\overline x\|$.
	
	Therefore $J_P$ is $d$-dense for every $d>\|\overline x\|$.
\end{proof}

\begin{proof}[Proof that $J_P$ is weakly dense]
	 Let $U=U_{\{\epsilon,x_0,\varphi_1,\ldots \varphi_n\}}$ be a basic weakly open set.
	
	Let $0\neq y\in \bigcap_{i=1}^n \ Ker(\varphi_i)$.  Let $M=\max\{\|\varphi_i\|\}$ and 
	$\tilde\epsilon=\f{\epsilon}{M}$. We consider for each $n\in\zN$, $y^n=n\cdot y+x_0$. The vector $y^n$ may fail to belong to $J_P$, however we can find a perturbation $x^n$ so that $x^n\in J_P\cap U$.  
	Denoting  $sign(0)=1$ and  $\overline x$ as in the previous proof, we define
	
	$$x^n_j:= sign(y^n_j) \max\{|y^n_j|,\f{\tilde \epsilon}{\|\overline x\|}|\overline x_j|\}.$$
	
	Clearly $\|x^n-y^n\|\leq \f{\tilde\epsilon}{\|\overline x\|}\|\overline x\|=\tilde\epsilon$.  We claim that for large $n$, $x^n\in J_P$. Let $k$ such that $y_k\ne 0$. Note that this implies that $0\neq x^n_k\to \infty$ as $n\to\infty$.

	The vector
	$$z^n=\frac{\tilde\epsilon}{\|\overline x\|} (\overline x_1,\overline x_2,\ldots,\overline x_{k-1},x^n_k,\overline x_{k+1},\ldots)
	$$
	satisfies that $|x^n_j|\geq |z^n_j|$ for every $j\geq 0$ and hence if $z^n$ belongs to $J_P$ so does  $x^n$. But
	$$
	P^k(z^n)= \left(\frac{\tilde\epsilon}{\|\overline x\|}\right)^{2^k}\f{c_k\l(\overline x_1,\overline x_2,\ldots,\overline x_{k-1},x^n_k,\overline x_{k+1},\ldots) \r}{c_k(\overline x)}P^k (\overline x)
	=\left(\frac{\tilde\epsilon}{\|\overline x\|}\right)^{2^k}\f{x^n_k}{\overline x_k}P^k (\overline x).
	$$
Since $P^k (\overline x)\in J_P$ we have that $P^k(z^n)\in J_P$ for sufficiently large $n$. 
Thus, the complete invariance of $J_P$ implies that $z^n\in J_P$.
	
	Finally $x^n$ belongs to $U$ because,
	$|\varphi_i (x^n-x_0)|\leq \|\varphi_i\| \tilde\epsilon+ \varphi(y^n-x_0)<  \epsilon$.
\end{proof}
\begin{proof}[Proof that $\Gamma\cdot J_P$ is dense for every $\Gamma$ with zero as accumulation point]
	Let $x\in\ell_1 ,$ $\epsilon>0$ and $\gamma\in \Gamma$  such that $\f{\epsilon}{\|\overline x\|\gamma}>1$.
	We consider
	$$z=\frac{1}{\gamma}sign (x_j)\max \{|x_j|,\f{\epsilon}{\|\overline x\|}\overline x_j\}.$$
	The vector $z$ is in $J_P$ since for each coordinate $|z_j|>|\overline x_j|$ and $\overline x_j\in J_P$. It is also immediate that $\|\gamma z-x\|_1<\epsilon.$
\end{proof}
\begin{proof}[Proof that $P$ is $\Gamma$ supercyclic for every unbounded $\Gamma$]
	It suffices to prove that $P$ is $\Gamma$-transitive for every unbounded $\Gamma$.
	Let $U$ and $V$ be open sets. We want to show that for some $\gamma\in\Gamma$ 
	and $n\in\zN$, $\gamma P^n(U)\cap V\neq$
	$ \emptyset$.
	 Since $V\cap J_P$ and $U\cap J_P$ may be both the empty set, we will skip the Julia set $J_P$. However 
	our approach will be very similar to the one that we used to prove that $P|_{J_P}$ was transitive.
	
	The inverses $S_\omega^k$ are not defined on $\ell_1$. However, by perturbing the tail of the sequence as we did in \eqref{coordenadas shift}, we have that the set $Y:=\{y\in \ell_1: S^k_\omega (y)\in\ell_1 \text{ for every } k\}$ is dense in $\ell_1$.
	
	Let $x\in U$ such that $x_j\neq 0$ for every $j$, and $y\in Y\cap V$  and consider $x^n:=\sum_{j\le n}x_je_j.$
	Since $\Gamma$ is unbounded we can choose $(\gamma_n)_n\sub \Gamma$ with $\f{S_\omega ^n(y)}{c_n(x)c_n(\Omega)\gamma_n}\rightarrow 0$. 
	Therefore,
	$$x^n+\f{S_\omega ^n(y)}{c_n(x)c_n(\Omega)\gamma_n}\rightarrow x$$
	and since $c_n$ reads only the first $n$ coordinates,
	$$\gamma_n P^n\l x^n+\f{S_\omega^n(y)}{c_n(x)c_n(\Omega)\gamma_n}
	\r=\gamma_nc_n(x^n)c_n(\Omega)B_w^n\l\f{S_\omega^n(y)}{c_n(x)c_n(\Omega)\gamma_n}\r=y.
	$$ 
\end{proof}

To prove the existence Theorem \ref{main theorem}, we will use
a version of the well known Lemma \cite[Lemma 2]{BonePer98} due to Bonet and Peris. This Lemma was a key ingredient to prove the existence of hypercyclic operators on arbitrary separable infinite dimensional Fr\'echet spaces. The main difference of this version is that we provide a control of the asymptotic behavior of the sequence $\alpha(n)=x_n^*(x_n)$.

\begin{lemma}
	Let $X$ be an infinite dimensional separable Fr\'echet space not isomorphic to $\C^\zN$ and let $\alpha(n)$ be a  sequence such that $n\alpha(n)\to 0$. Then, there are sequences $(x_n)_n$ in $X$ and $(x_n^*)_n$ in $X^*$ such that
	\begin{enumerate}
		\item $x_n\to 0$ and $span\{x_n\}$ is dense in $X$,
		\item $\{x_n^*\}$ is equicontinuous and 
		\item $x_n^*(x_k)=\alpha(n)\delta_{n,k}$.
	\end{enumerate} 
\end{lemma}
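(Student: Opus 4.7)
The plan is to reduce the statement to the original Bonet--Peris Lemma \cite[Lemma 2]{BonePer98} by a simple rescaling on the dual side; the only new content is to track the effect of the scaling on the biorthogonality relation and on equicontinuity.

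First I would invoke the Bonet--Peris Lemma applied to $X$ (which is legitimate since $X$ is separable, infinite dimensional, and not isomorphic to $\C^\zN$) to produce sequences $(a_n)_n\sub X$ and $(b_n)_n\sub X^*$ satisfying $a_n\to 0$, $\mathrm{span}\{a_n:n\in\zN\}$ dense in $X$, $\{b_n\}_n$ equicontinuous, and $b_n(a_k)=\delta_{n,k}$ for every $n,k\in\zN$.

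Then I would define $x_n:=a_n$ and $x_n^*:=\alpha(n)b_n$. Condition $(1)$ is immediate because $x_n=a_n$, so $x_n\to 0$ and $\mathrm{span}\{x_n\}=\mathrm{span}\{a_n\}$ remains dense regardless of where $\alpha$ vanishes. Condition $(3)$ is the computation $x_n^*(x_k)=\alpha(n)b_n(a_k)=\alpha(n)\delta_{n,k}$. For condition $(2)$, equicontinuity of $\{b_n\}_n$ yields a continuous seminorm $q$ on $X$ and a constant $C>0$ with $|b_n(x)|\le Cq(x)$ for every $n$ and every $x\in X$; since the hypothesis $n\alpha(n)\to 0$ in particular implies $\alpha(n)\to 0$, we have $M:=\sup_n|\alpha(n)|<\infty$, and therefore $|x_n^*(x)|\le MC\,q(x)$, which gives equicontinuity of $\{x_n^*\}_n$.

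There is no real obstacle in this argument: the difficult biorthogonal construction is already packaged in the cited lemma, and rescaling the functionals rather than the vectors sidesteps any worry about $\alpha$ having zeros (which would otherwise kill density of the span). The decay rate $n\alpha(n)\to 0$ is much more than is needed to prove this lemma by itself (boundedness of $\alpha$ would do), so I expect this stronger assumption to be exploited later in Theorem \ref{main theorem}, where one presumably needs summability of terms like $n\alpha(n)$ to ensure that the candidate polynomial $Q$ on $X$, quasiconjugate to $P=e_1'\cdot B_\omega$ on $\ell_1$, is well-defined and continuous.
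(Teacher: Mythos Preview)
Your proof is correct and genuinely simpler than the paper's. The paper reproves the Bonet--Peris construction essentially from scratch: it builds a biorthogonal pair $(y_n,x_n^*)$ with $x_n^*(y_k)=\delta_{n,k}$, $\|x_n^*\|\le 1$, and then rescales on the \emph{vector} side, setting $x_n=\alpha(n)y_n$. Because the $y_n$ are produced by a Gram--Schmidt step from a dense sequence in the unit sphere of a normed subspace, one only knows $\rho_n(y_n)\le n\beta_n$ for a carefully engineered auxiliary sequence $\beta_n\uparrow\infty$; proving $x_n\to 0$ then genuinely uses the full hypothesis $n\alpha(n)\beta_n\to 0$, which is why the authors need the elaborate interleaving of seminorms and vectors $(\psi_k,w_j)$ in their proof.

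By rescaling on the \emph{dual} side instead, $x_n^*:=\alpha(n)b_n$, you sidestep all of this: $x_n=a_n\to 0$ comes for free from the original lemma, and equicontinuity of $\{x_n^*\}$ needs only $\sup_n|\alpha(n)|<\infty$. Your remark that the decay $n\alpha(n)\to 0$ is not used in the lemma itself is accurate; in the paper that hypothesis is an artifact of rescaling the vectors rather than the functionals. For the application in Theorem~\ref{Conjl1} only the three stated properties of $(x_n,x_n^*)$ are invoked, so nothing is lost by your shortcut.
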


The proof of the above Lemma follows \cite{BonePer98}, together with a careful choice of the elements $(x_k)_k$.
\begin{proof}
	Let $(\rho_n)_n$ be a non-decreasing fundamental system of seminorms generating the topology on $X$.
	Since $X$ is not isomorphic to $\mathbb C^\zN$, there is a dense subspace $M\sub X$ and a continuous norm $\|\cdot\|$ defined on $M$. Without loss of generality we may assume that $\rho_1|_M=\|\cdot\|$.
	
	Consider $(z_n)_n\subset S_M$, a linearly independent sequence in the sphere of $M$ such that its span is dense in $M$. 
	 Let $(\beta_n)_n$ be a nondecreasing sequence such that, {$\beta_1>1$}, $\beta_n\to\infty$ and such that $n\alpha(n)\beta_n\to 0$.
	
%
	 We claim that, by adding new seminorms to our fundamental system $\{\rho_n\}_n$ and new vectors $(w_n)_n\sub S_M$ to the sequence $(z_n)_n$, we may assume that $\rho_k(z_j)\leq \beta_k$ for all $k\geq j$.

	Indeed, we will construct a subsequence $(n_l)_l$ of natural numbers and consider $(\psi_n)_n$ the non-decreasing fundamental system for $X$ given by $\psi_n=\rho_l$ for $n_{l-1}\le n< n_l$, and a new sequence of linearly independent vectors $(w_n)_n$, satisfying that $z_{l+1}=w_{n_l}$. This pair $(\psi_n,w_n)$ will satisfy  $\psi_k(w_j)\leq
	\beta_k$ 
	for all $k\geq j$. Since $span \{z_n:n\in\zN\}$ is already
	dense, it will follow that $span \{w_n:n\in\zN\}$ is also dense and since the $\psi_n$'s are the $\rho_n$'s but repeated, the systems $\{\psi_n\}$ and $\{\rho_n\}$ will generate the same topological space.  
	We construct the pairs $(\psi_n,w_n)$ inductively. Put $\psi_1=\rho_1, w_1=z_1$ and recall that $\rho_1(z_1)=1<\beta_1$. Since $\beta_k\to\infty$, there exists $n_1$ such that $\rho_2(z_1)< \beta_{n_1}$ and $\rho_2(z_2)<\beta_{n_1}$. We define  $\psi_{n_1}$ as $\rho_2$, $w_{n_1}=z_2$ and for $k<n_1$ we define $\psi_k$ as $\rho_1$. Since $\rho_2$ and $\rho_1$ are continuous, there exist $w_2,\ldots w_{n_1-1}\in S_M$ near $z_1$ and linearly independent to $\{z_n:n\in\zN\}$ such that 
	$\rho_2(w_i)<\beta_{n_1}$ for all $2\leq i\leq n_1-1$ and such that  $\psi_k(w_j)=\rho_1(w_j)=1<\beta_1\le \beta_k$ for all $2\le j\leq k\leq n_1$. Suppose now that we have constructed $\psi_1,\ldots \psi_{n_l}$ non-decreasing seminorms and vectors $w_1,\ldots w_{n_l}\in S_M$ such that the set $ \{w_i:{i\leq n_l}\} \cup \{z_n:{n\in\zN}\}$ is linearly independent, $w_{n_l}=z_{l+1}$, each $\psi_k$ is some of the $\rho_n$'s, $\psi_{n_l}=\rho_{l+1}$ and $ \psi_k(w_j)<\beta_k$ for all $j\leq k\leq n_l$. 
	We proceed as we did in the first step. There exists $n_{l+1}$ such that $\rho_{l+2}(w_j)< \beta_{n_{l+1}}$ for all $j\leq n_l$ and $\rho_{l+2}(z_{l+2})<\beta_{n_{l+1}}$. We put for $n_l<k<n_{l+1}$, $\psi_k=\rho_{l+1}=\psi_{n_{l}}$, $\psi_{n_{l+1}}=\rho_{l+2}$ and $w_{n_{l+1}}=z_{l+2}$. Notice that for $n_l<k< n_{l+1}$ and $j\leq n_l$ we have that $\psi_k(w_j)=\psi_{n_{l}}(w_j)<\beta_
	{n_{l}}\leq \beta_k$. If $k=n_{l+1}$, then this last relation also holds by the definition of $n_{l+1}$. Finally we choose for $n_l+1\leq j< n_{l+1}$ vectors $w_{j}\in S_M$ close to $z_1$ such that $\psi_{n_{l+1}}(w_j)<\beta_{n_{l+1}}$, such that, for $j\le k<n_{l+1}$, $\psi_{k}(w_j)=\psi_{n_{l}}(w_j)<\beta_{n_{l}}$  and such that $\{w_j:j\leq n_{l+1}\}\cup \{z_n:n\in\zN\}$ is linearly independent. The sequences $(\psi_k)_k$, $(w_j)_j$ constructed satisfy all the desired properties, and we have proved the claim.

	Now we choose by Hahn-Banach elements $(x_k^*)_k$ in $M^*$ such that $x_k^*(z_j)=\delta_{kj}$ for all $k>j$ and that $\|x_k^*\|=1$.
	
	Following a Gram-Schmidt argument we consider
	$y_n=z_n-\sum_{k<n} x_k^*(z_n)z_k$.
	It follows that $(y_n,x_n^*)$ is a biorthogonal sequence 
	in $M$.
	Observe also that $\rho_n(y_n)\leq n\beta_n$. We extend each $x_n^*$ to $X$ (which we keep notating $x_n^*$).
	Since $x_n^*$ has norm $1$ in $M$, this means that $|x_n^*(x)|\leq \rho_1(x)$ in $X$ and hence the $x_n^*$ are equicontinuous.
	Finally, the vectors $x_n$ will be  $x_n=\alpha(n)y_n$.
	Then, for every seminorm $\rho_j$ and $n>j$, 
	$$\rho_j(x_n)=\rho_j\l\alpha(n)y_n\r\le 
	\alpha(n)\rho_n(y_n)\leq n\beta_n \alpha(n)\to0.$$
\end{proof}

\begin{theorem}\label{Conjl1}
 Let $P\in \PP(^m \ell_1;\ell_1)$ be the polynomial $\l e_1'\r B_{\f{1}{n^4}}$ and let $X$ be any infinite dimensional and separable Fr\'echet space. Then there exists $Q\in \PP(^2X;X)$ and $\Phi\in\LL(\ell_1;X)$
 such that $Q$ is a quasiconjugacy of $P$ under $\Phi$.  In the case that $X$ is Banach we have also that $Q|_{J_Q}$ is a quasiconjugacy of $P|_{J_P}$. 
\end{theorem}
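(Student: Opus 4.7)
The plan is to apply the preceding Lemma with $\alpha(n):=2^{-n}$, so that $n\alpha(n)\to 0$ is satisfied and $\alpha(n+1)/\alpha(n)=1/2$ is bounded below. The Lemma provides $(x_n)_n\subseteq X$ with $x_n\to 0$ and $\operatorname{span}\{x_n\}$ dense, together with an equicontinuous system $(x_n^*)_n\subseteq X^*$ with $x_n^*(x_k)=\alpha(n)\delta_{n,k}$. Define $\Phi\in\mathcal L(\ell_1,X)$ by $\Phi(e_n):=x_n$; boundedness of $(x_n)_n$ gives continuity and density of the range follows from density of $\operatorname{span}\{x_n\}$. The identity $x_n^*\circ\Phi=\alpha(n)e_n'$ shows, additionally, that $\Phi$ is injective, a fact needed later.

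Next, I build $Q$ as the product of a continuous linear functional and a continuous linear operator: set
$$
f(z):=\frac{x_1^*(z)}{\alpha(1)},\qquad T(z):=\sum_{n\ge 1}\frac{x_{n+1}^*(z)}{(n+1)^4\,\alpha(n+1)}\,x_n,\qquad Q(z):=f(z)\,T(z).
$$
The one technical step is continuity of $T$: by equicontinuity there is a seminorm $\rho$ with $|x_k^*(z)|\le\rho(z)$ for all $k,z$, whence $\rho_j(Tz)\le\rho(z)\sum_{n\ge 1}\rho_j(x_n)/[(n+1)^4\,\alpha(n+1)]$. Using the Lemma's estimate $\rho_j(x_n)\le n\beta_n\alpha(n)$ for $n\ge j$, with $\beta_n$ of polynomial growth compatible with $n\alpha(n)\beta_n\to 0$, and $\alpha(n)/\alpha(n+1)=2$, this series is dominated by a convergent tail, so $T\in\mathcal L(X,X)$ and $Q\in\mathcal P(^2X;X)$. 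A direct check on the basis gives $T(x_k)=\frac{1}{k^4}x_{k-1}=\Phi(B_\omega e_k)$ and $f(x_k)=\delta_{1,k}=e_1'(e_k)$; hence $T\Phi=\Phi B_\omega$ and $f\Phi=e_1'$, and consequently $Q(\Phi y)=f(\Phi y)\,T(\Phi y)=e_1'(y)\,\Phi(B_\omega y)=\Phi(Py)$ on $c_{00}$ and, by continuity, on all of $\ell_1$.

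For the Banach case, I first observe that $\Phi(c_{00})\subseteq\Phi(A_P)\subseteq A_Q$ is dense in $X$ and $A_Q$ is open, so $R_Q=\emptyset$ and $J_Q=X\setminus A_Q$. Choose $x_0\in J_P$ whose $P$-orbit is dense in $J_P$ (Theorem~\ref{Ejl1}). If $\Phi x_0\in A_Q$, then $\Phi(P^nx_0)=Q^n(\Phi x_0)\to 0$; but $x_0$ is an accumulation point of its own $P$-orbit, so $\Phi(P^{n_k}x_0)\to\Phi x_0$ along some subsequence, forcing $\Phi x_0=0$ and, by injectivity, $x_0=0$, a contradiction. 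Hence $\Phi x_0\in J_Q$, and since $J_Q$ is closed and $Q$-invariant, $\Phi(J_P)=\Phi(\overline{Orb_P(x_0)})\subseteq\overline{Orb_Q(\Phi x_0)}\subseteq J_Q$. The delicate part is to upgrade $\overline{\Phi(J_P)}\subseteq J_Q$ to equality, so that $\Phi|_{J_P}:J_P\to J_Q$ has dense range. The plan for this final step is: given $w\in J_Q$ and $\varepsilon>0$, approximate $w$ by some $\Phi y$; note that $w\in J_Q$ forces $f(w)\ne 0$, which keeps $|y_1|$ bounded away from $0$; then perturb only the tail of $y$ by the coordinate-wise domination trick of the $d$-density proof of Theorem~\ref{Ejl1}, using the geometric decay $\|x_n\|_X\le n\beta_n\alpha(n)\lesssim n^2/2^n$ and the freedom to pre-scale $\alpha(1)$ small in the Lemma to make the $X$-norm of the perturbation arbitrarily small. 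This last tuning of $\alpha$ is the main analytic obstacle.
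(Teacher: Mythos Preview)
Your construction of $Q$ and $\Phi$ and the verification of the conjugacy $Q\Phi=\Phi P$ are correct. The choice $\alpha(n)=2^{-n}$ works but forces you to reach into the proof of the Lemma for the estimate $\rho_j(x_n)\le n\beta_n\alpha(n)$ and to pick $\beta_n$ of polynomial growth; the paper avoids this by taking $\alpha(n)=1/(n-1)^2$, so that the coefficient in $Q$ is $1/n^2$ and continuity follows from $x_n\to 0$ alone. Your argument that $\Phi(J_P)\subseteq J_Q$ via the hypercyclic vector $x_0$ and injectivity of $\Phi$ is correct and in fact slicker than the paper's coordinate computation.

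The genuine gap is in the density $\overline{\Phi(J_P)}=J_Q$. Your plan cannot succeed as stated, for two reasons. First, the domination trick from the $d$-density proof needs the \emph{first} coordinate to exceed the fixed constant $N$ required by Lemma~\ref{en Jp}; knowing only $|y_1|\ge|f(w)|/2$ is useless, since $|f(w)|$ can be arbitrarily small. Second, even if you perturb only coordinates $j\ge 2$ by at most $1/j!$, the resulting $X$-error $\sum_{j\ge 2}\frac{1}{j!}\|x_j\|_X$ is a \emph{fixed} positive number; no ``pre-scaling of $\alpha(1)$'' can make it smaller than an \emph{arbitrary} $\varepsilon$, because $\alpha$ must be chosen once and for all before $w$ and $\varepsilon$ are given.

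The paper's route is essentially different and uses information you discarded. Writing $Q^n(z)=d_n(z)T^n(z)$, one first passes to the dense set of $w\in J_Q$ with $w/s\in J_Q$ for some $|s|>1$; for such $w$ one has $|d_n(w)|\ge C\,t^{2^n}$, a \emph{doubly exponential} lower bound. One then approximates $w$ by $x^n=\Phi(b^n)\in\operatorname{span}\{x_k\}$ keeping $|d_n(x^n)|\ge t^{2^n}/2$, and modifies only the coordinates of $b^n$ beyond index $n$ to $a^n$ with factorial tail. The point is that $c_n(a^n)c_n(\Omega)=d_n(x^n)$ is huge, so the first coordinate of $P^n(a^n)$ is large enough for Lemma~\ref{en Jp} to place $P^n(a^n)$ (hence $a^n$, by complete invariance) in $J_P$. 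Because the perturbation lives only in coordinates $j>n$ and has factorial decay, $\|\Phi(a^n)-x^n\|$ is bounded by a tail sum that goes to $0$ as $n\to\infty$, giving approximation to arbitrary precision. The key idea you are missing is to exploit the growth of $d_n(w)$ and push the Lemma~\ref{en Jp} verification to $P^n(a^n)$ rather than to $a^n$ itself.
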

\begin{proof}
 Let $(x_n)_n\sub X$ and $(x_n^*)$ be the sequences given by the above lemma applied to the sequence $(\alpha_n)_n$ defined by 
 $\alpha_n=\f{1}{(n-1)^2}$ for $n\ge2$ and $\alpha_1=1$.
  Let $Q\in \PP(^2X;X)$ be defined as
  $$Q(x)=x_1^*(x)\sum_{n=1}^\infty \frac{x_{n+1}^*(x)x_{n}}{n^2}.$$
  Since the $x_n^*$ are equicontinuous and $x_n\to 0$ the polynomial is well defined.
  
  Let $\Phi:\ell_1\to X$ defined as $\Phi((a_n)_n)=\sum a_nx_n$. Again, since $x_n\to 0$ and $(a_n)_n\in \ell_1$ the operator is well defined.
  
  We see now that $Q$ is a quasiconjugacy of $P$ under $\Phi$. Since $span(x_n)$ is dense and it is contained in the range of $\Phi$, we have that $\Phi$ has dense range.
  
  Let $a\in \ell_1$. We have that
   $$\Phi(P(a))=\Phi(a_1 B_{\f{1}{n^4}}(a))=a_1 \sum_{n=1}^\infty \f{a_{n+1}x_n}{n^4}$$
   while 
   \begin{align*}
   Q(\Phi(a))&=Q\l\sum_{n=1}^\infty a_nx_n\r
   = x_1^*\l\sum_{n=1}^\infty a_nx_n\r \sum_{n=1}^\infty x_{n+1}^*(a_{n+1}x_{n+1}) \f{x_{n}}{n^2}\\
   &=a_1 \sum_{n=1}^\infty \f{a_{n+1}x_n}{n^4}.
   	\end{align*}

  A little more difficult is to prove that, in the Banach space case, $Q|_{J_Q}$ is a quasifactor of $P|_{J_P}$. We need to show two things.
 First that $P(J_P)\sub J_Q$ and second that $\overline {P(J_P)}=J_Q$. The first assertion is easy to prove. Let $a\in J_P$. Note that this implies that, for all $n$, $\sup_{k} \f{|P^n(a)_k|}{(k-1)^2}>1$. So we can choose $n_k$ satisfying $|P^n(a)_{n_k}|>(n_k-1)^2$. Since the $x_n^*$ are equicontinuous it follows that $\|x_n^*\|\leq C$ for some constant $C$. Therefore,
 $$C\|Q^n\Phi(a)\|=C\|\Phi P^n(a)\|\geq |x_{n_k}^*(\Phi P^n(a))|>1.$$
 Since $R_Q$ is the empty set, $\Phi(a)\in J_Q$.

 We will show now that $P(J_P)$ is dense in $J_Q$. Let $x\in J_Q$. We want to find for every $\epsilon>0$, $y\in \Phi(J_P)\cap B_\epsilon(x)$. 
 Let $T=\sum_{n=1}^\infty \frac{x_{n+1}^*(x)x_{n}}{n^2}$ so that we can write
 $Q^n(x)$ as $d_n(x)T^n(x)$ in the usual way , where $d_n(x)$ is a continuous function depending only on $\{x_1^*(x),\ldots, x_1^*(T^n(x))\}$. Notice that $c_n(a)c_n(\Omega)=d_n(\Phi(a))$. Indeed, $\Phi B_\f{1}{n^4}=T\Phi$, and we have that 
 \begin{align}\label{d_n}
 c_n(a)c_n(\Omega)T^n(\Phi(a))&=
 \Phi (c_n(a)c_n(\Omega)B^n_\f{1}{n^4}(a))= \Phi(P^n(a))\\
 \nonumber&=Q^n(\Phi(a))=d_n(\Phi(a))T^n(\Phi(a)).
 \end{align} 
 We conclude that $c_n(a)c_n(\Omega)=d_n(\Phi(a))$ for every $a\in \ell_1$.
 Without loss of generality we may suppose that there is some $t>1$ for which  $|d_n(x)|>t^{2^n}$. Therefore 
 we can find, for each $n\in\mathbb N$, $x^n\in span \{x_k:k\in \zN\}$ such that $\|x-x^n\|<\frac{\epsilon}{2}$ and $|d_{n}(x^n)|>\f{t^{2^n}}{2}$. Note that if $m_n=\max\{j:x_j^*(x^n)\neq 0\}$, then $n\leq m_n$. 
 Let $N$ such that $\sum_{j\geq N} \frac{1}{j!}\leq\frac{\epsilon}{2}$ and $\|x_n\|<1$ for every $n>N$. If each $x^n=\sum b_{j}^nx_j$ we consider the vector $a^n\in\ell_1$ 
 $$a^n_j= \begin{cases}
                   b_j^n&\text{ if }j\leq n;\\
                   sg (b_j^n)\max\{|b_{j}^n|, \frac {1}{(N-n+j)!}\}&\text{ if }j>n 
                  \end{cases}.$$
  We claim that $a^n\in J_P$ belongs to $J_P$ for large $n$. It suffices to show that $P^n(a^n)\in J_P$ for large $n$.                
  Since $\Phi(a^n)$, satisfies that $|d_{n}(\Phi(a^n))|=|d_{n}(x^n)|=|c_{n}(a^n)c_n(\Omega)|\geq \f{t^{2^n}}{2}$. We notice that $P^{n}(a^n)=c_n(a^n)c_n(\Omega)B^n_{\f{1}{j^4}}(a^n)$ has all its coordinates, with modulus greater than or equal to the coordinates of $$|c_n(a^n)c_n(\Omega)|\l\f{1}{N!n!^4},\f{2!^4}{(N+1)!(n+1)!^4},\f{3!^4}{(N+2)!(n+2)!^4},\ldots \r $$ which are  also greater than or equal to the coordinates of 
  $$\l\f{|c_n(a^n)c_n(\Omega)|}{n!^5},\f{1}{n+1!^5},\f{1}{n+2!^5},\ldots \r $$ provided that $n>N$. 
  Then by Lemma \ref{en Jp}, it follows that if $k_n$ satisfies that $$k_n\l  2^{2^{\frac{j+1}{2}}}\r^{\sqrt 2-1}\geq (j+1)!^4(j+n-1)!^5 \text{ for every } j
  	$$ 
  	then for every $L\geq k_n2^{2^{\frac{1}{2}}}$ the vector $(L,\f{1}{n!^5},\f{1}{(n+1)!^5},\ldots) \in J_P$. Note that we may take $k_n=(2n)!^5$. Thus, since $\f{t^{2^n}}{2k_n}\to \infty$, for large enough $n$, we have $\f{|c_n(a^n)c_n(\Omega)|}{n!^5}\ge k_n2^{2^{\frac{1}{2}}}$ and therefore  the vector $P^n(a^n)$ (and hence $a^n$) is eventually in $J_P$. Consequently, $\Phi(a^n)\in J_Q$.               
 
 Finally we have that
\begin{align*}
 \|\Phi(a^n)-x\|&\leq \|x-x^n\|+\|\Phi(a^n)-x^n\|\\
 &\leq \frac{\epsilon}{2}+\sum_{j=n+1}^{m_n}  (b_j^n-a_j^n)  \|x_j\| + \sum_{j=m_n+1}^\infty \frac{1}{j!}\|x_j\|\\
 &\leq \frac{\epsilon}{2}+ \left\|\l\frac{1}{j!}\r_{j\geq N}\right\|_{\ell_1}\leq \epsilon.
\end{align*}
 \end{proof}
\begin{theorem}\label{main theorem}
Let $X$ be an infinite dimensional separable Fr\'echet space, then there exists a non linear homogeneous polynomial that is at the same time weakly hypercyclic, $d$-hypercyclic and $\Gamma$-supercyclic for each $\Gamma\subset \mathbb C$ which is unbounded or has zero as an accumulation point.
\end{theorem}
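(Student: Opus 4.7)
My plan is to split the proof into two cases according to whether or not $X$ is isomorphic to $\mathbb{C}^{\mathbb{N}}$, reducing each to results already established in the paper.

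In the generic case $X \not\cong \mathbb{C}^{\mathbb{N}}$, I would start from the polynomial $P = e_1' \cdot B_\omega$ on $\ell_1$, for which Theorem \ref{Ejl1} already delivers simultaneous weak hypercyclicity, $d$-hypercyclicity, and $\Gamma$-supercyclicity for every unbounded $\Gamma$ and every $\Gamma$ having $0$ as an accumulation point. Theorem \ref{Conjl1} then produces a $2$-homogeneous polynomial $Q \in \mathcal{P}(^2 X; X)$ together with a continuous linear map $\Phi: \ell_1 \to X$ with dense range such that $Q \circ \Phi = \Phi \circ P$. The bulk of the argument then consists of pushing each property forward along $\Phi$ by means of the transfer lemmas already in place: Proposition \ref{d hiper} converts $d$-hypercyclicity of $P$ into $\tilde d$-hypercyclicity of $Q$ for some sequence $\tilde d$; Proposition \ref{conjugar Gamma} transfers $\Gamma$-supercyclicity directly; and Proposition \ref{weakly conj} transfers weak hypercyclicity, once I note that a continuous linear operator between Fréchet spaces is automatically weak-to-weak continuous, and that norm-dense range implies weakly dense range.

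The exceptional case $X \cong \mathbb{C}^{\mathbb{N}}$ falls outside the scope of Theorem \ref{Conjl1}, but is easier: by Peris \cite{Per01} there exists a chaotic, hence hypercyclic, homogeneous polynomial $Q$ on $\mathbb{C}^{\mathbb{N}}$. A dense orbit is automatically weakly dense and $d$-dense for every $d > 0$, so weak hypercyclicity and $d$-hypercyclicity are immediate. For $\Gamma$-supercyclicity, it suffices to observe that if $\gamma_0 \in \Gamma$ is any nonzero element then $\gamma_0 \cdot Orb_Q(x)$ is dense whenever $Orb_Q(x)$ is; and every admissible $\Gamma$ in our statement meets $\mathbb{C} \setminus \{0\}$ by hypothesis.

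I do not expect a genuine obstacle at this stage: the argument is essentially a packaging of Theorem \ref{Ejl1}, Theorem \ref{Conjl1}, and the three transfer propositions. The real difficulties have already been overcome earlier, namely (i) arranging the $\ell_1$-example so that a single orbit dense in $J_P$ witnesses all three hypercyclicity properties simultaneously, and (ii) constructing, for an arbitrary separable Fréchet space not isomorphic to $\mathbb{C}^{\mathbb{N}}$, a quasiconjugating continuous linear map $\Phi$ whose image retains enough of the Julia-set structure of $P$ to make the transfer work.
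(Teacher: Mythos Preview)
Your proposal is correct and follows essentially the same route as the paper: invoke Theorem~\ref{Ejl1} for the model polynomial $P$ on $\ell_1$, apply Theorem~\ref{Conjl1} to obtain a quasiconjugate $Q$ on $X$ via a linear factor $\Phi$, and then transfer each of the three properties through Propositions~\ref{d hiper}, \ref{weakly conj}, and \ref{conjugar Gamma}. The only difference is your explicit case split for $X\cong\mathbb{C}^{\mathbb{N}}$, which you handle directly via Peris's hypercyclic example; the paper's short proof instead relies on the statement of Theorem~\ref{Conjl1} covering every separable Fr\'echet space (even though the supporting biorthogonal-sequence lemma excludes $\mathbb{C}^{\mathbb{N}}$), so your split is a reasonable extra precaution rather than a genuinely different strategy.
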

\begin{proof}
	By Theorem \ref{Conjl1} there exists $Q\in \PP(^2X;X)$ a quasiconjugacy of $P=e_1'(x)B_{\f{1}{n^4}}$ under a linear factor $\Phi$. This polynomial is, by Theorem \ref{Ejl1} weakly hypercyclic, $d$-hypercyclic and $\Gamma$-supercyclic for every unbounded or not bounded away from zero $\Gamma$. Since $\Phi$ is a linear operator, it follows by Propositions \ref{d hiper}, \ref{weakly conj} and \ref{conjugar Gamma} that $Q$ satisfies the required properties.
\end{proof}


\begin{thebibliography}{10}

\bibitem{appell2004nonlinear}
J.~Appell, E.~De~Pascale, and A.~Vignoli.
\newblock {\em Nonlinear spectral theory}, volume~10.
\newblock Walter de Gruyter, 2004.

\bibitem{AroMir08}
R.~M. {Aron} and A.~{Miralles}.
\newblock {Chaotic polynomials in spaces of continuous and differentiable
  functions.}
\newblock {\em {Glasg. Math. J.}}, 50(2):319--323, 2008.

\bibitem{BayMat09}
F.~Bayart and E.~Matheron.
\newblock {\em {Dynamics of linear operators.}}
\newblock {Cambridge Tracts in Mathematics 179. Cambridge: Cambridge University
  Press. xiv, 337~p.}, 2009.

\bibitem{Ber98}
N.~C. {Bernardes}.
\newblock {On orbits of polynomial maps in Banach spaces.}
\newblock {\em {Quaest. Math.}}, 21(3-4):311--318, 1998.

\bibitem{BerPer13}
N.~C. Bernardes and A.~Peris.
\newblock On the existence of polynomials with chaotic behaviour.
\newblock {\em Journal of Function Spaces and Applications}, 2013, 2013.

\bibitem{BesCon14}
J.~B{\`e}s and J.~A. Conejero.
\newblock An extension of hypercyclicity for n-linear operators.
\newblock In {\em Abstract and Applied Analysis}, volume 2014. Hindawi
  Publishing Corporation, 2014.

\bibitem{BonePer98}
J.~{Bonet} and A.~{Peris}.
\newblock {Hypercyclic operators on non-normable Fr\'echet spaces.}
\newblock {\em {J. Funct. Anal.}}, 159(2):587--595, 1998.

\bibitem{BouFel03}
P.~S. Bourdon and N.~S. Feldman.
\newblock Somewhere dense orbits are everywhere dense.
\newblock {\em Indiana University mathematics journal}, 52(3):811, 2003.

\bibitem{cardeccia2017hypercyclic}
R.~Cardeccia and S.~Muro.
\newblock {A hypercyclic homogeneous polynomial on $H(\mathbb{C})$}.
\newblock {\em Journal of Approximation Theory}, 2017.

\bibitem{ChaSan04}
K.~C. Chan and R.~Sanders.
\newblock A weakly hypercyclic operator that is not norm hypercyclic.
\newblock {\em Journal of Operator Theory}, 52(1):39--59, 2004.

\bibitem{ChaErnMen16}
S.~{Charpentier}, R.~{Ernst}, and Q.~{Menet}.
\newblock {$\Gamma$-supercyclicity.}
\newblock {\em {J. Funct. Anal.}}, 270(12):4443--4465, 2016.

\bibitem{dinh2010dynamics}
T.-C. Dinh and N.~Sibony.
\newblock Dynamics in several complex variables: endomorphisms of projective
  spaces and polynomial-like mappings.
\newblock In {\em Holomorphic dynamical systems}, pages 165--294. Springer,
  2010.

\bibitem{Fel02}
N.~S. {Feldman}.
\newblock {Perturbations of hypercyclic vectors.}
\newblock {\em {J. Math. Anal. Appl.}}, 273(1):67--74, 2002.

\bibitem{GroKim13}
K.-G. Grosse-Erdmann and S.~G. Kim.
\newblock Bihypercyclic bilinear mappings.
\newblock {\em Journal of Mathematical Analysis and Applications},
  399(2):701--708, 2013.

\bibitem{GroPer11}
K.-G. Grosse-Erdmann and A.~Peris~Manguillot.
\newblock {\em {Linear chaos.}}
\newblock {Universitext. Berlin: Springer. xii, 386~p. EUR~53.45 }, 2011.

\bibitem{KimPer12}
S.~G. Kim, A.~Peris, and H.~G. Song.
\newblock Numerically hypercyclic operators.
\newblock {\em Integral Equations and Operator Theory}, 72(3):393--402, Mar
  2012.

\bibitem{KimPer122}
S.~G. Kim, A.~Peris, and H.~G. Song.
\newblock Numerically hypercyclic polynomials.
\newblock {\em Archiv der Mathematik}, 99(5):443--452, Nov 2012.

\bibitem{LeoMul04}
F.~Le{\'o}n-Saavedra and V.~M{\"u}ller.
\newblock Rotations of hypercyclic and supercyclic operators.
\newblock {\em Integral Equations and Operator Theory}, 50(3):385--391, Nov
  2004.

\bibitem{MarPer09}
F.~Mart{\'\i}nez-Gim{\'e}nez and A.~Peris.
\newblock Existence of hypercyclic polynomials on complex fr{\'e}chet spaces.
\newblock {\em Topology and its Applications}, 156(18):3007--3010, 2009.

\bibitem{MarPer10}
F.~Mart{\'\i}nez-Gim{\'e}nez and A.~Peris.
\newblock Chaotic polynomials on sequence and function spaces.
\newblock {\em International Journal of Bifurcation and Chaos},
  20(09):2861--2867, 2010.

\bibitem{Per01}
A.~{Peris}.
\newblock {Erratum to: ``Chaotic polynomials on Fr\'echet spaces''.}
\newblock {\em {Proc. Am. Math. Soc.}}, 129(12):3759--3760, 2001.

\bibitem{saavedra2004positive}
F.~L. Saavedra.
\newblock The positive supercyclicity theorem.
\newblock {\em Extracta Mathematicae}, 19(1):145--149, 2004.

\bibitem{Ued94}
T.~Ueda.
\newblock Fatou sets in complex dynamics on projective spaces.
\newblock {\em J. Math. Soc. Japan}, 46(3):545--555, 1994.

\end{thebibliography}
\end{document}